\newtheorem{thm}{Theorem}[section]
\newtheorem{lem}[thm]{Lemma}
\newtheorem{prop}[thm]{Proposition}
\newtheorem{cor}[thm]{Corollary}
\newtheorem{obs}[thm]{Observation}
\numberwithin{equation}{section}
\newcommand{\bd}[1]{\mathbf{#1}}  
\newcommand{\RR}{\mathbb{R}}      
\newcommand{\ZZ}{\mathbb{Z}}      
\newcommand{\CC}{\mathbb{C}}
\newcommand{\mat}[1]{\left(\begin{matrix} #1 \end{matrix} \right)}  
\newcommand{\al}[1]{\begin{align}#1\end{align}}
\newcommand{\aln}[1]{\begin{align*}#1\end{align*}}
\newcommand{\pp}{\mathcal{P}}
\newcommand{\KK}{\mathcal{K}}
\newcommand{\ii}{\bd{i}}
\newcommand{\DD}{\mathbb{D}}
\newcommand{\PP}{\mathcal{P}}
\newcommand{\bt}{\bigtriangleup}
\newcommand{\mi}{\mathcal{I}}
\newcommand{\area}{\operatorname{area}}
\newcommand{\ave}[1]{\left\langle#1\right\rangle} 
\newcommand{\Hecke}{\mathbb G}
\begin{document}

\nocite{*}

\title{Gap Distributions in Circle Packings}

\author{Zeev Rudnick and Xin Zhang}
\address{Raymond and Beverly Sackler School of Mathematical Sciences,
Tel Aviv University, Tel Aviv 69978, Israel}
\email{rudnick@post.tau.ac.il}
\email{xz87@illinois.edu}
\thanks{The research leading to these results has received funding from the European Research Council   under the European Union's Seventh Framework Programme (FP7/2007-2013) / ERC grant agreement
n$^{\text{o}}$ 320755}
\date{\today}

\begin{abstract}
We determine the distribution of nearest neighbour spacings between the tangencies to a fixed circle in a class of circle packings generated by reflections. 
We use a combination of geometric tools and the theory of automorphic forms. 
\end{abstract}
\maketitle


\section{Introduction}
 
\subsection{A class of circle packings}

In recent years, there has been increasing interest in the quantitative study of aspects of circle packings, in particular for Apollonian packings. For instance, the asymptotic count of the number of circles with bounded curvature has been determined \cite{KO, OS}, 
as well  as several  questions concerning the arithmetic of Apollonian packings, see   \cite{Sarnak, Fuchs}. 
 
In this paper, we investigate local statistics for the distribution
of tangencies to a fixed circle in a circle packing,  in a  class of packings generated by reflections.  
The packings that we study are formed by starting 
with an initial finite configuration $\mathcal K$ of circles in the plane with disjoint interiors,
and such that the gaps between circles are curvilinear triangles, as in Figure~\ref{fig configurations}. 
We then form the group $\mathfrak S$ generated by reflections in the dual circles defined by the triangular gaps
of the configuration, see \S~\ref{sec:Construct}, and by applying the elements of $\mathfrak S$ to the initial configuration $\mathcal K$,
we obtain an infinite circle  packing  $\PP$, see Figure~\ref {fig configurations} for an example. 

\begin{figure}[h]
\centering
\begin{minipage}{.4\textwidth}
  \centering
  \includegraphics[width=4cm]{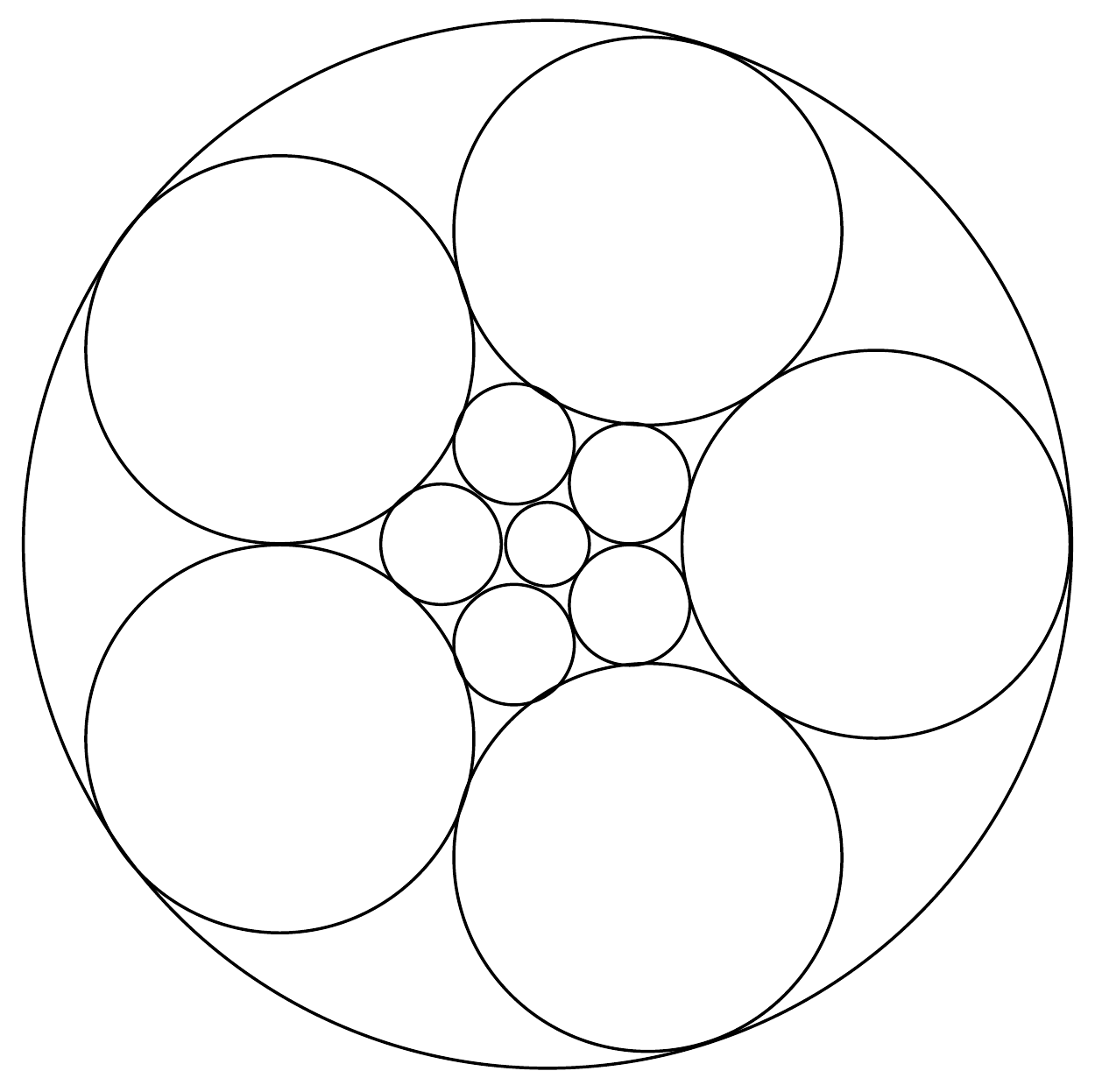}
\end{minipage}%
\begin{minipage}{.6\textwidth}
  \centering
  \includegraphics[width=6cm]{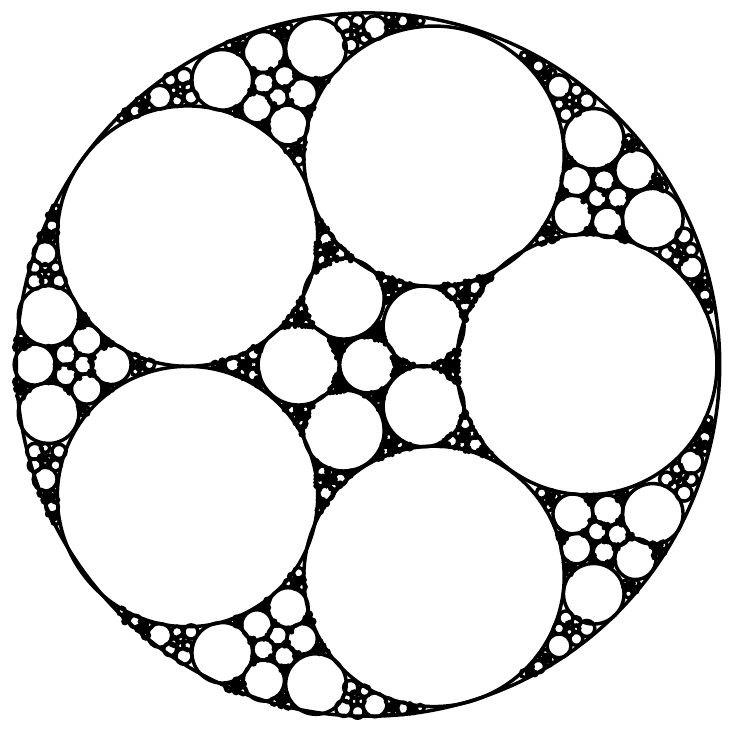}
\end{minipage}
\caption{An Apollonian-9 packing found by Butler, Graham, Guettler and 
Mallows. 
On the left is the initial configuration, for which the tangency graph is the icosahedron. }
 \label{fig configurations}
\end{figure}

We create such initial configurations by starting with a finite triangulation  $G$ of the Riemann sphere $\widehat \CC = \CC\cup \infty$. By the circle packing theorem of Koebe, Andreev and Thurston \cite{StephensonNotices}, there is a circle packing $\mathcal K$ of $\widehat \CC$ having $G$ as its tangency graph, that is a collection of disks with disjoint interiors on $\widehat \CC$ corresponding to the vertices of $G$ 
where two disks are tangent 
if and only if the corresponding vertices are connected in $G$. The gaps between the disks are the connected components of the complement $\widehat \CC\backslash \cup_{C\in \mathcal K}C$ 
of the disks in  $\widehat \CC$. They are triangular 
because we assume that $G$ is a triangulation, that is each face of $G$ is a triangle.  
After stereographic projection from the point $\infty$, we obtain a circle packing $\mathcal{K}$ in the finite plane $\CC$. 

\subsection{Tangencies} 

We fix a base circle $C_0\in \PP$ and consider the  subset $\PP_0\subset \PP$ of circles
tangent to $C_0$. Let $\mathcal{I}\subseteq C_0$ be an arc (if $C_0$ is a line, take $\mathcal{I}$ to be a bounded interval).  Let $\mathcal{A}_{\mathcal{I}}$ be the set of tangencies in $\mathcal{I}$, and $\mathcal{A}_{T,\mathcal{I}}$ be the subset of $\mathcal{A}_{\mathcal{I}}$ whose corresponding circles in $\pp_0$ have curvatures bounded by $T$, see Figure~\ref{fig tangencies}.  

\begin{figure} [b]
\begin{center}
\includegraphics[width=5cm]{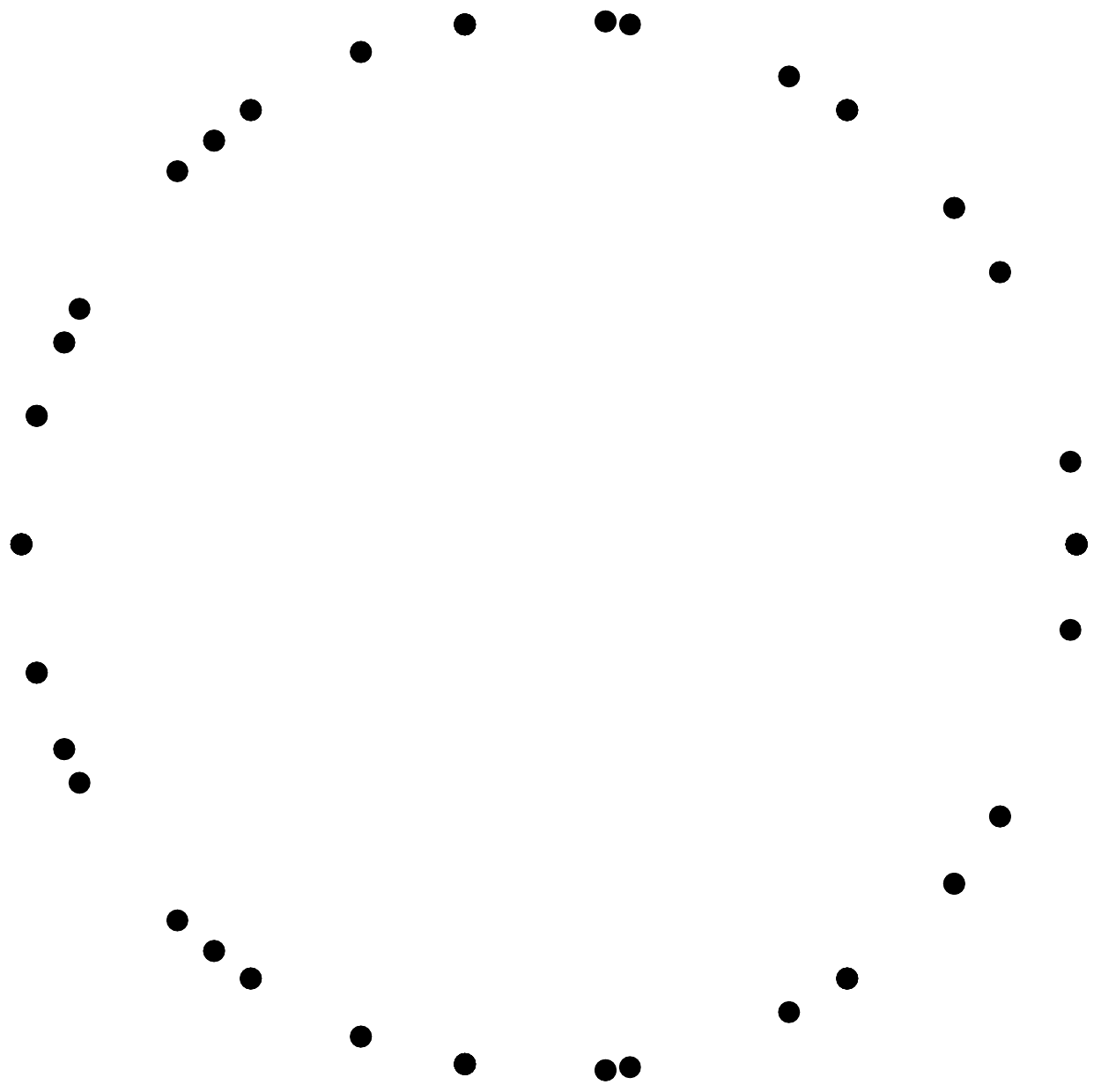}
 \caption{Tangencies $\mathcal A_{T,C_0}$  associated to the Apollonian-9 packing  of Figure~\ref{fig configurations}, with $C_0$ being the bounding circle.}
 \label{fig tangencies}
\end{center}
\end{figure}

We will show that   the cardinality of  $\mathcal{A}_{T,\mathcal{I}}$ grows linearly with $T$: 
\begin{equation}\label{eq:unifdist}
 \#\mathcal{A}_{T,\mathcal{I}}\sim l(\mathcal{I})c_{\pp,C_0}T,\quad T\to \infty,
 \end{equation}
where $l(\mathcal{I})$ is the standard arclength of $\mathcal{I}$, and $c_{\pp,C_0}$ is a constant depending only on $\pp$ and $C_0$. In particular, the tangencies are uniformly distributed in $C_0$. 
To compare, the total number of circles $C\in \mathcal P$ with $\kappa(C)<T$ is $\sim {\rm const}(\mathcal P)T^{\delta}$ for some $\delta>1$ \cite{KO, OS}. 

The   goal of our paper is to study the distribution of nearest neighbour spacings (gaps) in $\mathcal{A}_{\mathcal I}$.  
Let $\{x_{T,\mathcal{I}}^{i}\}$ be the sequence of tangencies in
$\mathcal{A}_{T,\mathcal{I}}$ ordered by some orientation (say, the counter-clockwise orientation). The
nearest-neighbour gaps, or spacings, between the tangencies are
$$d(x_{T,\mathcal{I}}^i,x_{T,\mathcal{I}}^{i+1})$$
where $d$ denotes the arc-length distance.
The mean spacing is
$$ \ave{d_{T}}:= \frac{l(\mathcal{I})}{\#\mathcal{A}_{T,\mathcal{I}}} \sim \frac 1{c_{\pp,C_0}T} \;.
$$
 We define the gap distribution function to be
$$
F_{T,\mathcal{I}}(s)=F_{T;\PP,C_0}(s)=
\frac 1{\#\mathcal{A}_{T,\mathcal{I}}}
\#\{x_{T}^i:\frac{d(x_{T}^i,x_{T}^{i+1})}{ \ave{d_{T}}}\leq s\} \;.
$$

Clearly the definition of $F_{T,\mathcal{I}}$ does not depend on the orientation that we choose.  We will show there is a limiting gap distribution, which is conformally invariant and  independent of $\mathcal{I}$: 
 
\begin{thm}\label{gapdistributionintro}
Given $\mathcal{P},C_0,\mathcal{I}$ as above, there exists a continuous piecewise smooth function $F(s)=F_{\PP,C_0}(s)$, such that
$$\lim_{T\rightarrow\infty}F_{T,\mathcal{I}}(s)=F(s).$$
Moreover, the limit distribution  $F_{\PP,C_0}(s)$ is conformally invariant:  if $M\in SL(2,\CC)$, $M\mathcal{P}=\tilde{\PP}$, $MC_0=\tilde{C_0}$ then
\begin{equation}\label{eq: conformal invariance}
 F_{\tilde{\PP},\tilde{C_0}}(s) = F_{\PP,C_0}(s).
\end{equation}
\end{thm}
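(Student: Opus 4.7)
The plan is to reduce the gap-distribution problem to one about equidistribution of orbits of a Fuchsian subgroup of the reflection group $\mathfrak S$, and then to extract the limit distribution via a rescaling argument in the spirit of Elkies--McMullen.

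As a first step, I would apply a M\"obius transformation in $SL(2,\CC)$ so that $C_0$ is the real line $\widehat \RR$, with the packing $\pp$ lying in the closed upper half-plane. The subgroup $\Gamma:=\mathrm{Stab}_{\mathfrak S}(C_0)$ then sits inside the group of isometries of the hyperbolic plane $\mathbb H$ and, for the reflection groups under consideration, is a geometrically finite Fuchsian group. From the construction of $\pp$ one checks that the tangent circles $\pp_0$ fall into finitely many $\Gamma$-orbits $\Gamma\cdot C^{(1)},\dots,\Gamma\cdot C^{(r)}$. Fixing a base circle $C^{(j)}$ tangent to $\RR$ at $x^{(j)}$ with curvature $\kappa^{(j)}$, the image $\gamma C^{(j)}$ under $\gamma=\bigl(\begin{smallmatrix}a&b\\c&d\end{smallmatrix}\bigr)\in\Gamma$ is tangent at $\gamma x^{(j)}$ with curvature $\kappa^{(j)}|cx^{(j)}+d|^{2}$, by the standard transformation law for horocyclic radii.

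Counting $\mathcal A_{T,\mi}$ thus amounts to counting, modulo point-stabilizers, those $\gamma\in\Gamma$ with $\gamma x^{(j)}\in \mi$ and $|cx^{(j)}+d|\le(T/\kappa^{(j)})^{1/2}$. In an Iwasawa decomposition $\gamma=nak$, this is a constraint on the $A$-component, so $\mathcal A_{T,\mi}$ is, up to boundary error, the projection to $\mi$ of a long piece of an expanding horocycle orbit in $\Gamma\backslash SL(2,\RR)$, and the asymptotic \eqref{eq:unifdist} follows from equidistribution of such arcs (Sarnak's theorem when $\Gamma$ is a lattice, or a Burger--Roblin statement in the infinite-covolume case). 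To pass to the gap distribution I would rescale the picture near a point $x\in\mi$ by a factor of $T$ so that the mean spacing becomes of order one, and rephrase the event ``the normalized gap at $x_{T,\mi}^{i}$ is at most $s$'' as the membership of an associated element $g_{x}\in\Gamma\backslash SL(2,\RR)$ in a target set $\Omega(s)$ whose boundary depends piecewise algebraically on $s$. Averaging over $x\in\mathcal A_{T,\mi}$ and invoking effective equidistribution of $\{g_{x}\}$ along the same horocycle arc gives
\aln{
\lim_{T\to\infty}F_{T,\mi}(s)=\int_{\Gamma\backslash SL(2,\RR)}\mathbf 1_{\Omega(s)}\,d\mu \;=: \;F(s),
}
with $\mu$ the Haar-induced probability measure. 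Continuity and piecewise smoothness of $F$ follow from the piecewise algebraic description of $\partial\Omega(s)$, and independence of $\mi$ is manifest since $\mi$ has been integrated out.

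Finally, conformal invariance is a consequence of two complementary observations. Abstractly, for $M\in SL(2,\CC)$, conjugation by $M$ identifies the triple $(\mathfrak S,\Gamma,\{C^{(j)}\})$ attached to $\pp$ with its analogue for $M\pp$, so the integral defining $F$ is unchanged. Concretely, near a point $p\in C_0$ the map $M$ scales arclength by $|M'(p)|$ and the curvatures of tangent circles by $|M'(p)|^{-1}$, so the ratio $d(x_T^i,x_T^{i+1})/\ave{d_T}$ is unaffected in the limit once the already-established $\mi$-independence is used on a small subarc. The main obstacle is producing effective equidistribution of horocycle arcs with a remainder that is uniform in the shrinking target sets $\Omega(s)$; this is precisely where the theory of automorphic forms (spectral decomposition of $L^{2}(\Gamma\backslash\mathbb H)$ and bounds on Eisenstein series) enters, and where the bulk of the technical work is expected to lie.
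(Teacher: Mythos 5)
You take a genuinely different route from the paper --- a homogeneous-dynamics argument (horocycle equidistribution plus an Elkies--McMullen/Athreya--Cheung style rescaling and Poincar\'e section), whereas the paper reduces the gap statistics to counting elements $\gamma\in\Gamma$ whose lower row $(c_\gamma,d_\gamma)$ lies in explicit dilated plane regions with $a_\gamma/c_\gamma\in\mathcal{I}$, and then applies Good's counting theorem (Theorem~\ref{anton}). Your route is viable in principle (the paper itself cites the dynamical treatments of the Farey case), but as written it has a genuine gap at its center: the sentence in which you ``rephrase the event that the normalized gap at $x^i_{T,\mathcal{I}}$ is at most $s$ as membership of an associated element $g_x$ in a target set $\Omega(s)$'' is not a rephrasing --- constructing $\Omega(s)$ \emph{is} the problem. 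To do it one must determine exactly which circles of $\PP_0$ can be nearest neighbours in $\mathcal{A}_T$. That is the content of the paper's \S\ref{fordgapdistribution}: a curvature-comparison lemma (Lemma~\ref{comparecurv}) showing that any circle inside a triangular gap resting on $\RR$ is smaller than the circles bounding the gap; the consequence (Proposition~\ref{12}) that every adjacent pair in $\mathcal{A}_T$ is a $\Gamma_0$-translate of one of the finitely many pairs $(C_i,C_j)$ from the initial configuration; and the explicit list of quadratic inequalities on $(c_\gamma,d_\gamma)$ equivalent to adjacency (Proposition~\ref{omegaij}), which in the tangent case necessarily involves the reflected circles $S_{i,j}C_k$ --- a subtlety invisible from your phrase ``boundary depends piecewise algebraically on $s$''. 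Without this combinatorial-geometric input, no target set can be written down, in your framework or any other.

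Two further points need repair. First, you allow $\Gamma$ to have infinite covolume (``geometrically finite'', ``Burger--Roblin''), but the theorem as stated --- linear growth of $\#\mathcal{A}_{T,\mathcal{I}}$ and a universal limit law --- relies on $\Gamma$ being a lattice; the paper proves this via Poincar\'e's polyhedron theorem, giving $\area(\Gamma)=2\pi(h-2)$, and this verification cannot be waved away. Second, your limit formula $F(s)=\int_{\Gamma\backslash SL(2,\RR)}\mathbf{1}_{\Omega(s)}\,d\mu$ with $\mu$ the Haar probability measure is the wrong-shaped statement: the elements $g_x$ form a discrete set indexed by the tangencies, so what must equidistribute are the hitting points of a Poincar\'e section, and the limit is the induced section measure of a set, not the Haar measure of a set in the full homogeneous space; this is repairable (Athreya--Cheung carry it out for the Farey sequence) but is not what you wrote. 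Your conformal-invariance sketch, by contrast, is essentially the paper's own localization argument (Theorem~\ref{transferprinciple}): $M$ is approximately a dilation on short subarcs and normalized gaps are scale invariant --- that part is fine in outline, though it too rests on the $\mathcal{I}$-independence that only the missing geometric analysis can establish.
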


Our method does not just give the existence, but actually explicitly calculates the limiting spacing distribution in terms of certain areas, showing that it is a continuous, piecewise smooth function. We also show that $F(s)$ is supported away from the origin, which is a very strong form of ``level repulsion''. 

In \S~\ref{sec:examples} we explicitly compute the limiting distribution for some examples. As a warm-up, we start with   classical Apollonian packings, where we start with three mutually tangent circles and then fill in each curvilinear triangle with the unique circle which is tangent to all three sides of that triangle, and then repeat this process with each newly created curvilinear triangle. In that case we recover a theorem of Hall \cite{Hall} on the gap distribution of Farey points. For a history and further results in this direction  see \cite{CZsurvey}, and see \cite{AC, Marklofsurvey} for   treatments using homogeneous dynamics. 

We then compute the gap  distribution for two other classes of packings: the Guettler and Mallows \cite{GM} packing in Figure~\ref{fig configurations two},  called also Apollonian-3, 
where the curvilinear triangle is filled by three new circles, each tangent to exactly two sides, and for which the associated tangency graph is the octahedron; 
and for the configuration (called Apollonian-9) found by Butler, Graham, Guettler and 
Mallows \cite[Figure 11]{BGGM}, for which the tangency graph is the icosahedron, as in Figure~\ref{fig configurations}. 
 The density functions of these three cases are displayed in Figure~\ref{fig:threeappdensity}.
 \begin{figure}[h]
\begin{center}
\includegraphics[width=12cm]{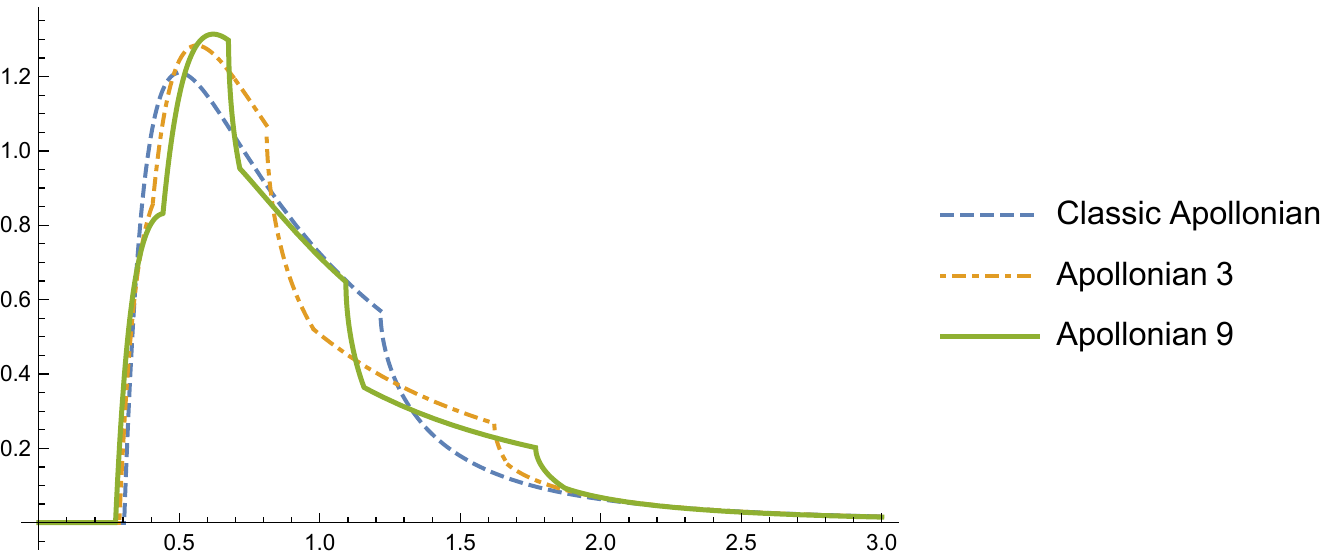}
 \caption{The density $F'(s)$ of the gap distribution for classical Apollonian packings (dashed curve), which is the same as that discovered by Hall for Farey sequence,   for the Apollonian-3 packings of Guettler and Mallows (dotted curve), and for the Apollonian-9 packing of Butler, Graham, Guettler and Mallows (solid curve).}
 \label{fig:threeappdensity}
\end{center}
\end{figure}
 
\subsection{Our method}
We work on models where $\mathcal{K}$ consists of two horizontal lines, with $C_0=\RR$ and $C_1 = \RR+\ii$, and the remaining circles lies in the strip between them.  We call such models generalized Ford configurations.  Let $\Gamma$ be the subgroup of $\mathfrak S$ consisting of orientation preserving elements of $\mathfrak S$  that fix the base circle $C_0$. It is a non-uniform lattice in ${\rm PSL}(2,\RR)$ (this group was used crucially in Sarnak's work \cite{Sarnak} on Apollonian packings). 
Using geometric considerations, we reduce the spacing problem to counting elements  $\begin{pmatrix} a&b\\c&d \end{pmatrix}$ of the 
Fuchsian group $\Gamma$ such that the lower row $(c,d)\in \RR^2$
lies in a dilated region defined by the intersection of  certain quadratic inequalities, and such that $a/c$ lies in a given interval.  
After that, we use the spectral
theory of automorphic forms, specifically the work of Anton Good
\cite{Go83}, to show that the number of lattice points is asymptotically a multiple of the area of this dilated region.

\subsection{Plan of the paper} 
In \S~\ref{sec:Construct} we give  details how to construct $\PP$ from the initial configuration $\KK$.  
We will also construct related groups and prove their geometric properties. 
In \S~\ref{sec: uniform dist of tgts} and \S~\ref{fordgapdistribution} we will prove the uniform distribution of tangencies \eqref{eq:unifdist} 
 and Theorem \ref{gapdistributionintro} in the special case that 
 $\PP$ is a generalized Ford packing, and $C_0$ is one of the two lines from $\PP$.  
In \S~\ref{transfer} we show how to deduce the general case of \eqref{eq:unifdist} and Theorem \ref{gapdistributionintro} from the results of \S~\ref{fordequidistribution} and \S~\ref{fordgapdistribution}. In \S~\ref{sec:examples} we compute some examples. We reduce the computational effort involved by using conformal invariance,   observing  that the packings in the examples admit extra symmetries. For instance, in the case of Apollonian-9 packings, the associated symmetry group is the non-arithmetic Hecke triangle group $\Hecke_5$.

\section{Constructions}\label{sec:Construct}

\subsection{Constructing initial configurations}

We start with a (finite) triangulation  $G$ of the sphere. By the circle packing theorem of Koebe, Andreev and Thurston \cite{StephensonNotices}, there is a circle packing $\mathcal K$ 
of $S^2$ having $G$ as its tangency graph, that is a collection of disks with disjoint interiors on $S^2$ corresponding to the vertices of $G$ (there must be at least $4$ vertices), where two disks are tangent 
if and only if the corresponding vertices are connected in $G$. The interstices, or gaps, between the disks, are the connected components of the complement $S^2\backslash \cup_{C\in \mathcal K}C$ 
of the disks in $S^2$. 
Because we assume that $G$ is a triangulation, that is each face of $G$ is a triangle, this means that the gaps are triangular.

We may identify $S^2$ with the Riemann sphere $\widehat \CC = \CC\cup \infty$, and then by picking a point 
which will correspond to $\infty$ and performing a stereographic projection onto the plane we obtain a circle packing  $\mathcal{K}$ in the finite plane $\CC$. 
If $\infty$ is contained in the interior of one of the disks,  then $\KK$ is realized as a configuration of finitely many circles, with one circle containing the other ones, see Figure~\ref{fig configurations two}. If $\infty$ is a point of tangency of two disks, then $\KK$ is a configuration consisting of two lines and several other circles in between; we call this type of configuration a  generalized Ford configuration, see Figure~\ref{fig:Apollonian-3fordbis}. 

  \begin{figure}[h]
\centering
\begin{minipage}{.5\textwidth}
  \centering
  \includegraphics[width=.7\linewidth]{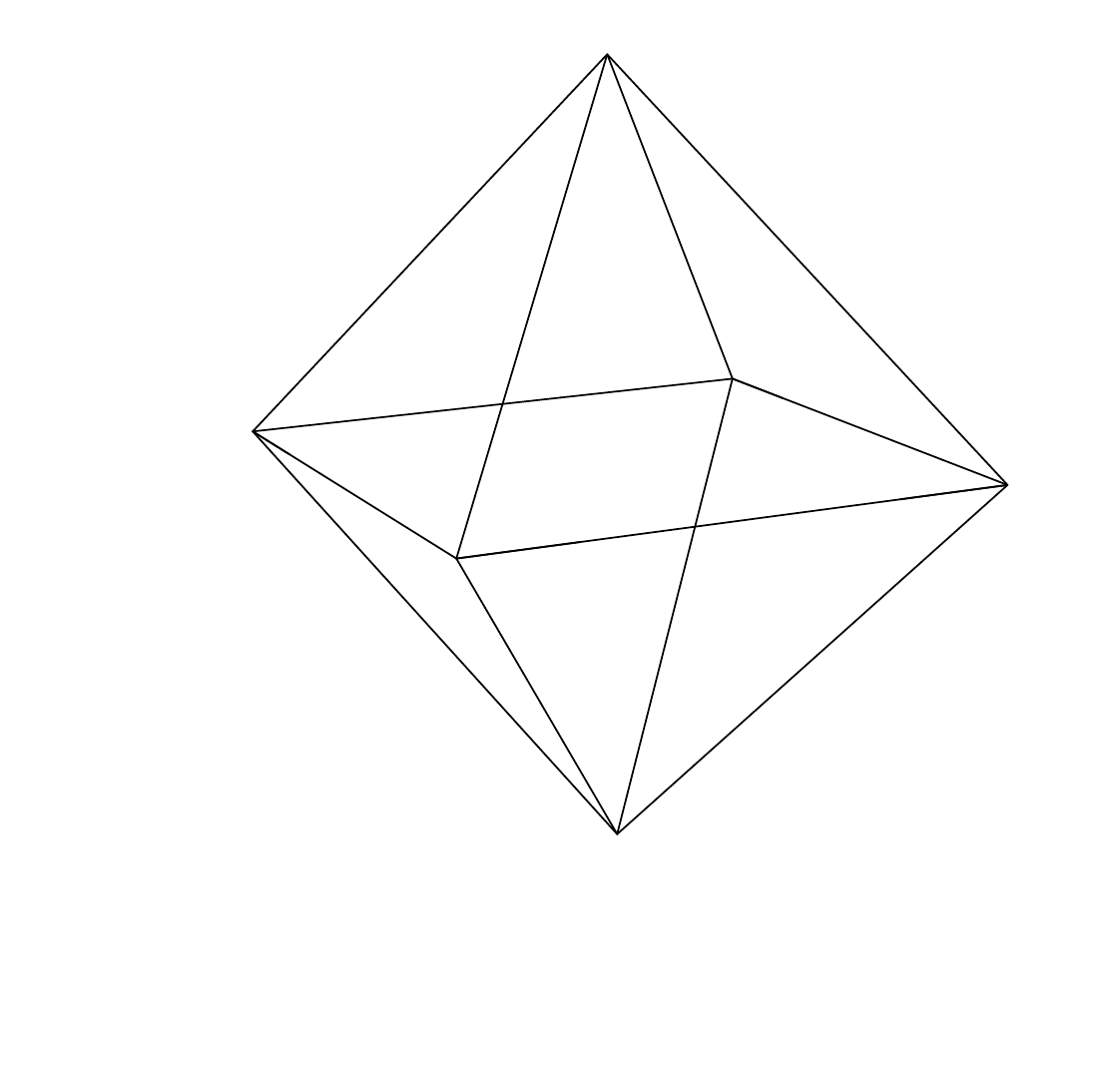}
\end{minipage}%
\begin{minipage}{.5\textwidth}
  \centering
  \includegraphics[width=.7\linewidth]{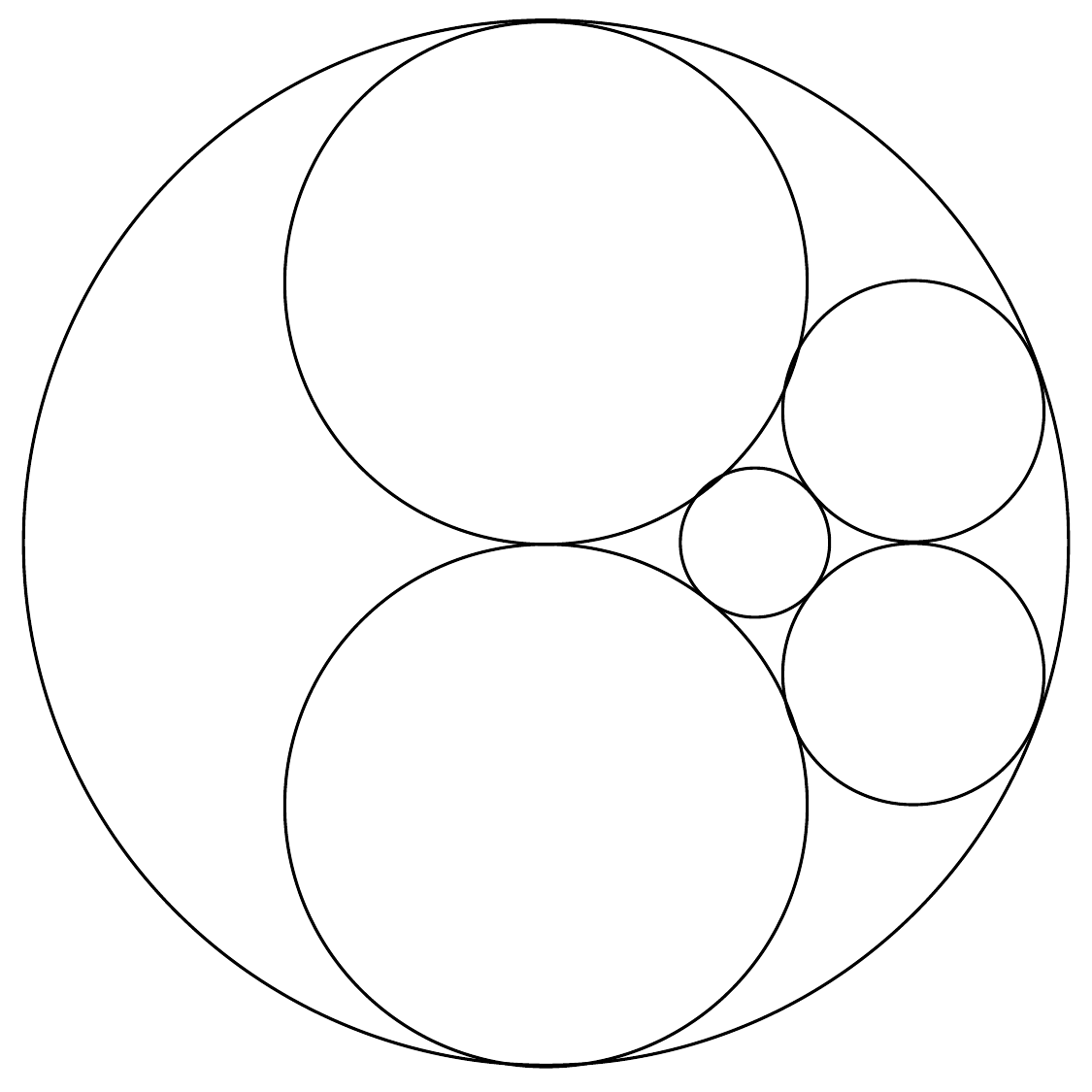}
\end{minipage}
\caption{An initial configuration in the plane, which we call the Apollonian-3 configuration,  for which the associated tangency graph is the octahedron.}
 \label{fig configurations two}
\end{figure}

\subsection{Construction of the packing}
We start with a circle packing $\mathcal{K}$ as above, of $H$ tangent circles
$C_1,C_2,\hdots, C_H$ in the plane, so that the gaps between circles are
curvilinear triangles. We build up a circle packing $\mathcal{P}$
from $\mathcal{K}$ by refections: for each triangular gap, we draw a
dual circle which passes through the vertices of the triangle, see
Figure~\ref{fig:Apo config}.
\begin{figure}[ht]
\begin{center}
\includegraphics[height=5cm, width=5cm]
{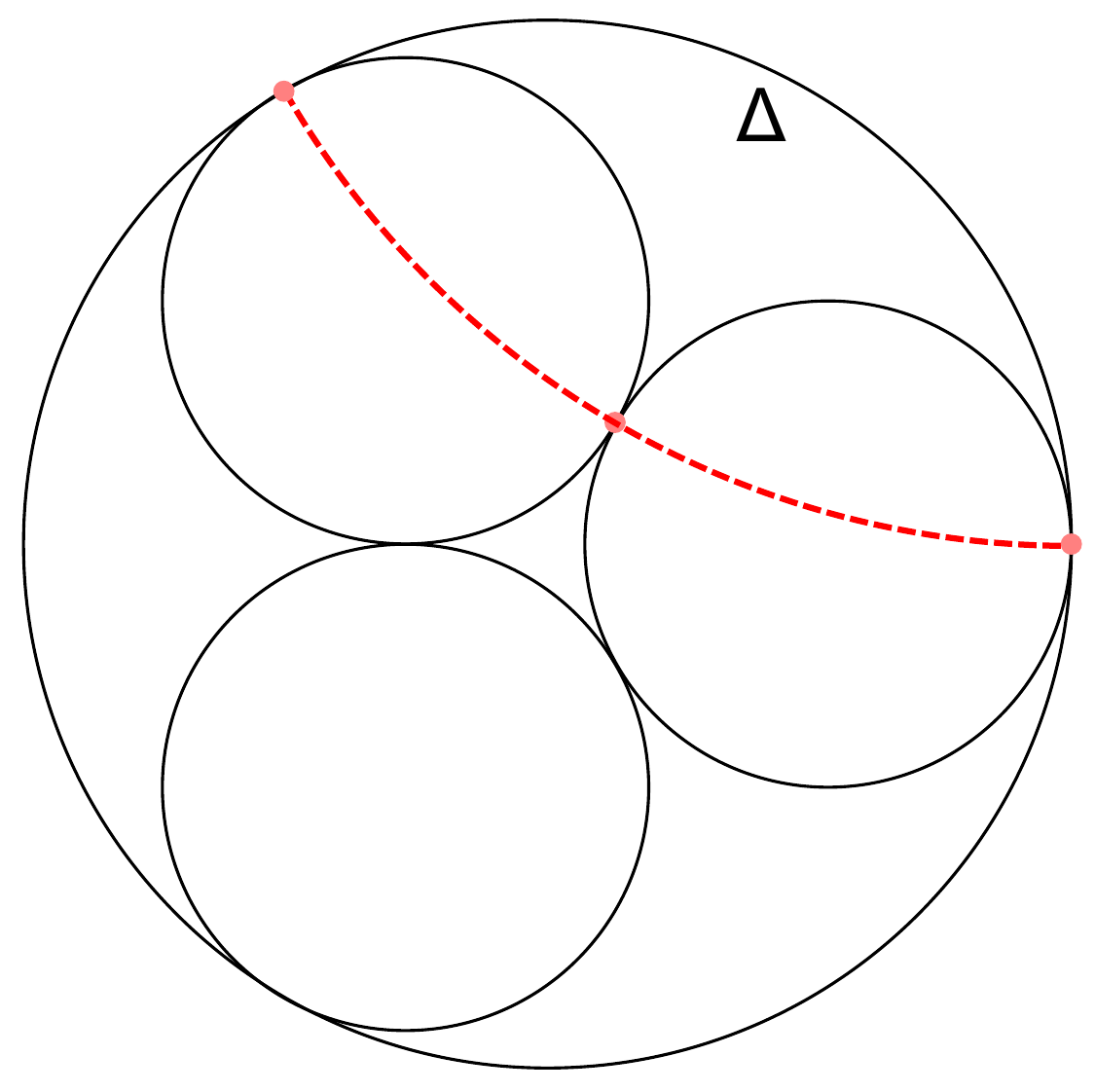}
 \caption{An initial configuration, which generates a classical Apollonian packing.
The dual circle corresponding to the triangular gap labeled $\Delta$ is the dashed circle.}
 \label{fig:Apo config}
\end{center}
\end{figure}
We can generate new circles in the gap by reflecting
$\mathcal{K}$ through the dual circles. Each of the circles forming
the gap is fixed under this reflection, and the remaining circles
are reflected into the gap. Denote these reflections by $\mathcal{S}=\{S_1,S_2,\hdots,S_k\}$ and the corresponding
triangles by $\mathcal{T}=\{\bt_{S_1},\bt_{S_2},\hdots,\bt_{S_k}\}$, where $k$ is
the number of gaps. Note that all these reflections are
anti-holomorphic maps in $\CC$  (throughout this paper, by triangle we mean solid triangle with vertices removed).

Let $\mathfrak{S}=\langle S_1, S_2, \hdots, S_k\rangle$ be the group
generated by the reflections. The reflection group $\mathfrak{S}$
has  only the relations $S_1^2=S_2^2=\hdots=S_k^2=id$. This can be
seen from the Ping Pong Lemma: $\bt_{S_i}$ are disjoint sets, and $S_{j}$ maps every
$\bt_{S_i}$ into $\bt_{S_j}$ when $j\neq i$. 
If we apply $\mathfrak{S}$ to the configuration  $\mathcal{K}$, we obtain a circle packing $\pp$ of infinitely many circles.

Pick a circle $C_0$ from $\PP$, and let $\PP_0$ be the circles
tangent to $C_0$ and let $\mathcal{K}_0=\mathcal{K}\cap
\mathcal{P}_0=\{C_1,\hdots,C_h\}$. We are interested in studying the counting and
spacing problems on $\PP_0$.  The first step is to place $\PP$ in an ambient hyperbolic space with $C_0$ being the boundary, and show that there
is a finite covolume Fuchsian group $\Gamma$ acting on $\PP_0$ and
having finitely many orbits.

\subsection{Construction of $\Gamma$}

A general element in $\mathfrak{S}$ is given by $T_1\cdot \dots T_m$ with $T_i\in\mathcal{S}$.  Therefore each circle $C\in\PP$ can be represented by $ T_1\cdot \dots T_m C_l$, where $C_l\in\mathcal{K}$.  We denote the triangular gap corresponding to $T_i$ by $\bt_{T_i}$.  We define the length of the above word to be $m$.  We say the word above is \emph{minimal} for $C$ if we can not find any shorter expression, and we say $C$ is in $m$-th generation.  We first show that there is a canonical way to express a circle with minimal length: 

\begin{lem}\label{g1}The minimal word for a given circle $C\in\PP$ is unique.  Moreover, $T_1T_2\hdots T_m C_l=C$ is minimal if and only if we have \al{\label{g2}\nonumber&C_l\cap {\bigtriangleup}_{T_m}= \emptyset,\\&T_mC_l\subset{\bigtriangleup}_{T_m},T_{m-1}T_mC_l\subset{\bigtriangleup}_{T_{m-1}},\hdots,T_1\hdots T_mC_l\subset{\bigtriangleup}_{T_1}.}

\begin{proof}
We first prove the ``only if" direction by induction on the generation of $C$. For the base case first generation, if $T_1C_l$ is minimal, then $\bt_{T_1}\cap C_l=\emptyset$, otherwise $T_1$ will fix $C_l$ and then the word is not minimal.  As a result, $T_1C_l\subset\bt_{T_1}$.  Suppose the 
``only if" direction holds for generations up to $m-1$ and $C=T_1T_2\hdots T_m C_l$ is a minimal word for $C$, then $T_2\hdots T_m C_l$ has to be a minimal word for some circle.  By our induction hypothesis, $T_2\hdots T_m C_l\in {\bigtriangleup_{T_2}},\hdots, T_mC_l\subset\bt_{T_m}$ and $C_l\cap\bt_{T_m}=\emptyset$.  Since $T_1\neq T_2$ (otherwise $T_1T_2=id$ and the word is not minimal), we must have $T_1T_2\cdots T_mC_l\subset T_1 {\bigtriangleup}_{T_2} \subset {\bigtriangleup}_{T_1}$. 

From \eqref{g2} we see that $T_1$ corresponds the unique triangle in $\mathcal{T}$ where $T_1C$ sits in, $T_2$ corresponds the unique triangle in $\mathcal{T}$ where $T_2$ sits in ,$\hdots$, $T_k$ corresponds he unique triangle in $\mathcal{T}$ where $T_{k-1}\cdots T_1C$ sits in.  This canonical description shows the uniqueness of minimal expression for a circle. 

We prove the ``if" direction by induction on the length of expression \eqref{g2}.  If $\bt_{T_1}\cap C_l=\emptyset$, then $T_1C_l$ is a circle lying $\bt_{T_1}$. The only circles with shorter words are the ones in $\KK_0$, all of which do not lie in $\bt_{T_1}$, so $T_1C_l$ is minimal.  Now suppose $C=T_1^{'}T_2^{'}\hdots T_{m}^{'}C_{l^{'}}$ is the minimal word for $C$.  Then by our discussion above $T_1^{'}$ is the unique reflection that corresponds to the triangle in $\mathcal{T}$ where $C$ sits in, so $T_1^{'}=T_1$.  Then $T_1C=T_2\hdots T_m C_l$ satisfies the condition \eqref{g2} and the minimal word for $T_1C$ is 
$T_1C=T_2^{'}\hdots T_{m}^{'}C_{l^{'}}$.  By our induction hypothesis, $m^{'}=m, l^{'}=l$, $T_{i}^{'}=T_i$ for each $1\leq i\leq m$. 
\end{proof}

\end{lem}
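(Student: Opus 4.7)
The plan is to run two parallel inductions on the word length $m$, using the ping-pong setup already described: the triangular gaps $\bt_{S_i}$ are pairwise disjoint, the circles of $\KK$ all lie outside every $\bt_{S_i}$ (by construction of the dual circles), and $S_j$ carries $\bt_{S_i}$ into $\bt_{S_j}$ whenever $i\neq j$. The uniqueness statement will then be an immediate consequence of the chain of containments \eqref{g2}, since the first letter $T_1$ can be recovered from $C$ as the unique reflection whose triangle $\bt_{T_1}$ contains $C$.

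For the "only if" direction I would argue as follows. In any minimal word, consecutive letters must differ (otherwise an $S_i^2=\id$ cancellation shortens the word), and every suffix $T_i T_{i+1}\cdots T_m C_l$ must itself be minimal. The base case $m=1$: if $T_1 C_l$ is minimal then $T_1 C_l\neq C_l$, so $C_l$ is not one of the three circles of $\KK$ bounding $\bt_{T_1}$; since circles of $\KK$ have disjoint interiors and none lies inside any gap, $C_l\cap \bt_{T_1}=\emptyset$, hence $T_1 C_l\subset \bt_{T_1}$. For the inductive step, the induction hypothesis applied to $T_2\cdots T_m C_l$ yields $T_2\cdots T_m C_l\subset \bt_{T_2}$ and $C_l\cap \bt_{T_m}=\emptyset$; since $T_1\neq T_2$, the ping-pong property gives $T_1 T_2\cdots T_m C_l\subset T_1\bt_{T_2}\subset \bt_{T_1}$, completing the chain.

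For the "if" direction, suppose $T_1\cdots T_m C_l$ satisfies \eqref{g2}, and let $T_1'\cdots T_{m'}'C_{l'}$ be \emph{the} (not yet known to be unique) minimal word for $C$. By the "only if" direction just proved, the minimal word also satisfies the analogue of \eqref{g2}, so in particular $C\subset \bt_{T_1'}$. But also $C\subset \bt_{T_1}$, and the gaps are disjoint, forcing $T_1'=T_1$. Applying $T_1$ we find that $T_2\cdots T_m C_l$ satisfies \eqref{g2} for $T_1 C$ and has minimal representative $T_2'\cdots T_{m'}'C_{l'}$; the inductive hypothesis then gives $m'-1=m-1$, $l'=l$, and $T_i'=T_i$ for all $i\ge 2$. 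The base case $m=1$ again reduces to the observation that no circle of $\KK$ lies inside any $\bt_{T_1}$, so a word $T_1 C_l$ with $C_l\cap \bt_{T_1}=\emptyset$ can only be shortened to a length-$0$ word $C_{l'}\in\KK$, which is impossible since $T_1 C_l\subset \bt_{T_1}$.

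The only real subtlety I anticipate is being careful at the base case that the circles of the initial configuration $\KK$ are genuinely disjoint from the interiors of all the triangular gaps $\bt_{S_i}$, so that the dichotomy "$C_l$ bounds $\bt_{T_1}$ or $C_l\cap\bt_{T_1}=\emptyset$" is clean; this is essentially the definition of the dual circles and of the gaps, but it is the geometric input that makes the otherwise purely combinatorial ping-pong argument work.
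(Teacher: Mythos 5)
Your proof is correct and follows essentially the same route as the paper's: the same induction on word length, with the ping-pong property ($T_1\bt_{T_2}\subset\bt_{T_1}$ when $T_1\neq T_2$) yielding the ``only if'' chain, and recovery of the first letter as the unique reflection whose gap contains $C$ yielding both the ``if'' direction and uniqueness. The extra care you take at the base cases (circles of $\KK$ being disjoint from every gap, and suffixes of minimal words being minimal) only makes explicit what the paper leaves implicit.
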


If $m\geq 1$, we write $\bt_{T_1\cdots T_m}=T_1\cdot \dots \cdot T_{m-1} \bt_{T_m}$, and define $\bt_{C}=\bt{T_1\cdots T_m}$ to be the \emph{minimal} triangle to $C$.  Clearly $C$ is contained in $\bt_{C}$. 

Now let $\Gamma_0$ be the subgroup of $\mathfrak S$ consisting of elements that fix a given circle $C_0$.  By relabeling let $\mathcal{K}_0=\{C_1,C_2,\cdots,C_h\}\subset\mathcal{K}$ be the set of circles that are tangent to $C_0$, and their tangencies are $\alpha_1,\alpha_2,\cdots,\alpha_h$.  Let $\mathcal{S}_0=\{S_{1},\hdots,S_{h}\}\subset\{S_1,\hdots,S_k\}$ be the set of generators of $\mathfrak S$ that fix $C_0$, where $S_{i}\in\mathcal{S}_0$ is the reflection corresponding to the triangle formed by $C_0,C_i,C_{i+1}$.  For our convenience, at this point we make the convention that $C_{i+h}=C_i,\alpha_{i+h}=\alpha_i$ and $S_{i+h}=S_i$ for $i\in\ZZ$, because $C_i$'s from $\mathcal{K}_0$ are forming a loop around $C_0$.

\begin{lem} $\Gamma_0=\langle\mathcal{S}_0\rangle$ is generated by the reflections in $\mathcal{S}_0$.
\begin{proof}
If $\gamma=T_1\hdots T_q$ where $T_1,\hdots,T_q\in \mathcal{S}$ and $\gamma C_0=C_0$, we first absorb the trivial identities $T_{i}T_{i+1}=id$ if $T_i=T_{i+1}$, then we cut the first and last few consecutive letters of the word $T_1\hdots T_q$ that are from $\mathcal{S}_0$ (which may not exist).  We continue this procedure several times until it stabilizes, then we get a word for $C_0$, which has to satisfy condition \eqref{g2} so is minimal.  But the minimal word for $C_0$ is just $C_0$.  Since each cutting corresponds to multiplying an element in $\langle\mathcal{S}_0\rangle$, we have $\gamma\in\langle \mathcal{S}_0\rangle$.
\end{proof}

\end{lem}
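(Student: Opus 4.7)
The plan is to reduce any $\gamma \in \Gamma_0$ to the trivial word by using the uniqueness of the canonical minimal expression in Lemma~\ref{g1}, applied to $C_0$ as the reference circle. The containment $\langle \mathcal{S}_0 \rangle \subseteq \Gamma_0$ is immediate since every element of $\mathcal{S}_0$ fixes $C_0$ by construction, so the work is entirely in the reverse inclusion.

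First I would write $\gamma = T_1 T_2 \cdots T_q$ with $T_i \in \mathcal{S}$ and reduce by canceling any adjacent identical letters using $T_i^2 = \id$. Then I would strip the (possibly empty) maximal prefix and maximal suffix whose letters lie in $\mathcal{S}_0$, producing a decomposition $\gamma = P_1 \cdot W \cdot P_2$ with $P_1, P_2 \in \langle \mathcal{S}_0 \rangle$ and $W = T_{r+1}\cdots T_{q-s}$ a reduced word whose first and last letters, if any, are drawn from $\mathcal{S} \setminus \mathcal{S}_0$. Reducedness of $W$ is inherited from that of the original word since stripping only removes from the two ends; should any new adjacent duplicates appear through subsequent manipulations one would iterate reduction and stripping, and since each pass strictly shortens the word the procedure stabilizes.

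Because $\gamma$, $P_1$, and $P_2$ all fix $C_0$, so does $W$. Suppose for contradiction that $W$ is nonempty. Since the last letter $T_{q-s}$ does not lie in $\mathcal{S}_0$, the curvilinear triangle $\bt_{T_{q-s}}$ is bounded by three circles of $\mathcal{K}$, none of which is $C_0$; hence $C_0 \cap \bt_{T_{q-s}} = \emptyset$. The ping-pong property $S_j \bt_{S_i} \subset \bt_{S_j}$ for $i \neq j$, together with the reducedness of $W$, then yields the full chain $T_{q-s} C_0 \subset \bt_{T_{q-s}}$, $T_{q-s-1} T_{q-s} C_0 \subset \bt_{T_{q-s-1}}$, \ldots, $W C_0 \subset \bt_{T_{r+1}}$. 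This is exactly the minimality criterion \eqref{g2} for $W C_0$, so by Lemma~\ref{g1} the word $W$ is the unique minimal expression for the circle $W C_0$. But $W C_0 \subset \bt_{T_{r+1}}$ forces $W C_0 \neq C_0$, contradicting $W C_0 = C_0$. Hence $W$ is empty and $\gamma = P_1 P_2 \in \langle \mathcal{S}_0 \rangle$.

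The main obstacle, though mild, is the geometric assertion that $C_0$ is disjoint from any initial triangular gap $\bt_S$ with $S \notin \mathcal{S}_0$. This rests on the fact that the curvilinear triangles in $\mathcal{T}$ are precisely the connected components of $\widehat{\CC} \setminus \bigcup_{C \in \mathcal{K}} C$, each bounded by arcs of exactly three circles from $\mathcal{K}$; the circle $C_0$ can meet the closure of such a gap only if it is one of the three bounding circles, which by definition corresponds to $S \in \mathcal{S}_0$, and tangencies at vertices are excluded by the convention that triangles have their vertices removed.
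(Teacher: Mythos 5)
Your proof is correct and takes essentially the same route as the paper: cancel adjacent duplicate letters, strip the maximal $\mathcal{S}_0$-prefix and suffix, verify that the remaining reduced word satisfies the minimality criterion \eqref{g2} via the ping-pong containments, and conclude that this word must be empty. In fact you supply details the paper leaves implicit, notably the geometric justification that $C_0\cap\bt_S=\emptyset$ when $S\notin\mathcal{S}_0$, and your endgame (deriving $WC_0\subset\bt_{T_{r+1}}$, hence $WC_0\neq C_0$) is an equally valid substitute for the paper's appeal to uniqueness of the minimal word.
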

\begin{lem}$\Gamma_0$ acts on $\PP_0$, and has finitely many orbits on $\PP_0$.
\end{lem}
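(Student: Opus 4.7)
The plan is to prove two claims separately: that $\Gamma_0$ maps $\PP_0$ into itself, and that the resulting action has at most $h=|\KK_0|$ orbits. The first is immediate, since every $\gamma\in\Gamma_0$ fixes $C_0$ setwise and hence permutes the circles tangent to $C_0$.

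For the orbit bound I would argue by induction on the length of the canonical minimal word provided by Lemma~\ref{g1}. Given $C\in\PP_0$, write $C=T_1\cdots T_m C_l$ in its minimal form. If $m=0$ then $C=C_l\in \KK\cap \PP_0=\KK_0$ and there is nothing to prove. If $m\geq 1$, the crux is to show that $T_1\in\mathcal{S}_0$. Once this is known, $T_1C=T_2\cdots T_m C_l$ is still tangent to $C_0$ (since $T_1$ fixes $C_0$), and by the uniqueness clause of Lemma~\ref{g1} the expression $T_2\cdots T_m C_l$ is already the minimal word for $T_1C$, now of length $m-1$; applying the induction hypothesis to $T_1C$ yields $T_1C\in\Gamma_0\cdot C_i$ for some $C_i\in\KK_0$, whence $C\in\Gamma_0\cdot C_i$. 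Since $|\KK_0|=h<\infty$, this bounds the number of orbits by $h$.

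The key geometric input is therefore the assertion $T_1\in\mathcal{S}_0$. By \eqref{g2} the circle $C$ lies inside the closed curvilinear triangle $\bt_{T_1}$, whose boundary consists of arcs of exactly three circles of $\KK$. Every other circle of $\PP$ is produced by reflection into some other triangular gap and, by the Ping Pong description of $\mathfrak S$, occupies a region with interior disjoint from $\bt_{T_1}$. Consequently the only circles of $\PP$ that $C\subset \bt_{T_1}$ can be tangent to are those three bounding circles. The hypothesis that $C$ is tangent to $C_0$ therefore forces $C_0$ to be one of them, and by the definition of $\mathcal{S}_0$ this is precisely the statement $T_1\in\mathcal{S}_0$.

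The main obstacle I anticipate is making the geometric claim of the previous paragraph fully rigorous: one must verify that no circle of $\PP$ apart from the three boundary circles of $\bt_{T_1}$ meets the closed triangle $\bt_{T_1}$. This should follow from the nested containment $\bt_{T_1\cdots T_j}\subset \bt_{T_1}$ implicit in Lemma~\ref{g1}, together with the Ping Pong picture already used to establish the defining relations of $\mathfrak S$. Once this disjointness is in hand the induction collapses, and the uniform bound $|\Gamma_0\backslash \PP_0|\leq h$ follows at once.
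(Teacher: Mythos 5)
Your proposal is correct and takes essentially the same approach as the paper: the paper likewise observes that $\Gamma_0$ preserves tangency to $C_0$, and then uses the minimal word $C=T_1\cdots T_m C_l$ of Lemma~\ref{g1} together with the geometric fact that the triangle $\bt_{T_1}$ containing $C$ must be bounded in part by $C_0$ (forcing $T_1\in\mathcal{S}_0$), your explicit induction being exactly the paper's ``by the same argument'' step for $T_2,\hdots,T_m$ and $C_l$. The disjointness point you flag as needing rigor is asserted equally tersely in the paper, so there is no substantive difference between the two arguments.
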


\begin{proof} 
$\Gamma_0$ acts on $\PP_0$ because $\mathfrak S$ send pairs of tangent circles to pairs of tangent circles.  Since $\Gamma_0$ fixes $C_0$, it will send any circle tangent to $C_0$ to some circle tangent to $C_0$.  Now suppose $C\in\mathcal{P}_0$ and the minimal word for $C$ is  $C=T_{1}\hdots T_m C_{l}$.  From the argument in Lemma 2.1, $T_{1}$ corresponds to a triangle $\bt_{T_1}\in\mathcal{T}$, which has to be a triangle that contains part of $C_0$ where $C$ lies in, so $T_1$ has to be from $\mathcal{S}_0$.  By the same argument, one can show that $T_{2},\hdots, T_{m}\in \mathcal{S}_0$, and $C_l\in \mathcal{K}_0$.  Therefore, the number of orbits is $h$, the cardinality of $\mathcal{K}_0$.
\end{proof}

\subsection{Geometric Properties of $\Gamma$}

Now we put a hyperbolic structure associated to $\Gamma$.  Without loss of generality we assume that $C_0$ is the bounding circle of radius $1$, whose interior is the unit disk $\DD$, and let $g_i$ be the geodesic connecting $\alpha_i$ and $\alpha_{i+1}$ (again we extend the definition for all $i\in\ZZ$ by setting $g_i$=$g_{i+h}$), so that the region bounded by $g_i$ and $C_0$ contains the triangle $\bt_{S_i}$.  Each $SIi\in\mathcal{S}_0$ preserves the metric in $\DD$,  so $\Gamma_0\subset \text{Isom}(\DD)$, the isometry group of $\DD$.  Let $\mathcal{F}_0$ be the open region bounded by the loops formed $g_i$'s (see Figure~\ref{fig:fundamentaldomain}).  
\begin{figure}[h]
\begin{center}
\includegraphics[width=4cm]{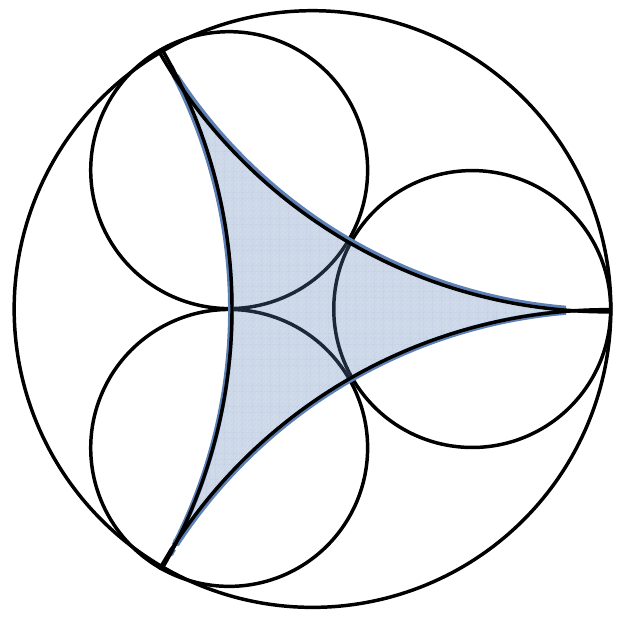}
 \caption{ The fundamental domain $\mathcal{F}_0$ for $\Gamma_0$. }
 \label{fig:fundamentaldomain}
\end{center}
\end{figure}
The following theorem, due to Poincar\'e (see \cite[Theorem 7.1.3]{Ra06}),  
tells us that $\Gamma_0$ acts discontinuously on $\DD$ with fundamental domain $\mathcal{F}_0$.  
\begin{thm}
Let $P$ be a finite-sided, convex polyhedron
in $\DD$    
of finite volume all of whose dihedral angles are submultiples of $\pi$.
Then the group generated by the reflections in the sides of $P$ is a
discrete reflection group with respect to the polyhedron $P$.
\end{thm}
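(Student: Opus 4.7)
The plan is to follow Poincar\'e's classical strategy: build a tiling of $\DD$ from $G$-translates of $P$, and deduce both the abstract presentation of $G$ and its discreteness from this tiling. Let $G = \langle r_1, \ldots, r_k \rangle \subset \mathrm{Isom}(\DD)$ denote the group generated by the reflections $r_i$ in the sides of $P$, and let $\tilde G$ be the abstract group with generators $\sigma_1, \ldots, \sigma_k$ and relations $\sigma_i^2 = 1$ together with $(\sigma_i \sigma_j)^{n_{ij}} = 1$ whenever sides $i, j$ meet at dihedral angle $\pi / n_{ij}$. There is an obvious surjective homomorphism $\phi : \tilde G \to G$ sending $\sigma_i \mapsto r_i$; the goal is to show $\phi$ is an isomorphism and that the translates $gP$, $g \in G$, tile $\DD$.

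The key construction is the ``formal tiling'' $X = (\tilde G \times P)/\!\sim$, where $(g, x) \sim (g \sigma_i, x)$ whenever $x$ lies on the $i$-th side of $P$. The hypothesis that every dihedral angle is a submultiple of $\pi$ is precisely what is needed so that around each codimension-two edge exactly $2 n_{ij}$ copies of $P$ fit together with total angle $2\pi$; around codimension-three vertices the finite-volume and convexity assumptions ensure the corresponding link is a spherical polyhedron that closes up. Thus $X$ inherits the structure of a complete, connected hyperbolic manifold (or orbifold along the fixed sets of the $\sigma_i$). Now define a developing map $D : X \to \DD$ by sending the identity copy $\{e\} \times P$ isometrically to $P \subset \DD$ and propagating across each shared face by the actual reflection $r_i$; the matching relations guarantee that $D$ is a well-defined local isometry.

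Because $X$ is complete and $\DD$ is simply connected, $D$ is a covering, hence a global isometry. Pulling this back yields three conclusions simultaneously: $X$ is simply connected (so $\phi$ is injective and $\tilde G \cong G$); the translates $\{gP : g \in G\}$ tile $\DD$ with pairwise disjoint interiors; and for any compact $K \subset \DD$ only finitely many $gP$ meet $K$, since $P$ has finite volume and the tiles are isometric copies. This last statement is exactly proper discontinuity of the $G$-action, which forces $G$ to be discrete in $\mathrm{Isom}(\DD)$ and makes $P$ a fundamental domain. The step I expect to be the main obstacle is verifying simple connectivity of $X$: one must show every loop in $X$ can be contracted using only the elementary relations coming from codimension-two edges, and this is the combinatorial heart of Poincar\'e's theorem --- the angle-sum condition is what lets one reduce a general loop to a product of such elementary loops via a van Kampen-style argument on the face structure of $P$.
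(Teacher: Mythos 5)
The paper offers no proof of this statement to compare against: it is quoted as Poincar\'e's theorem, with a citation to \cite{Ra06} (Theorem 7.1.3). Your sketch follows the classical developing-map strategy (formal tiling $X=(\tilde G\times P)/\!\sim$, local isometry $D\colon X\to\DD$, completeness $\Rightarrow$ covering $\Rightarrow$ isometry), which is indeed the standard route. But there is a genuine gap: completeness of $X$ is asserted and never addressed, and it is precisely the delicate point of the theorem. The hypothesis is finite volume, not compactness, so $P$ may have ideal vertices; in the application in this paper the fundamental polygon $\mathcal{F}_0$ has \emph{all} of its vertices at ideal points (the cusps $\alpha_i$), and every dihedral angle is $0$, so your local analysis --- interior points of sides, codimension-two edges with angle $\pi/n_{ij}$, links of finite vertices --- covers none of the actual difficulty in the case at hand. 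Neighborhoods of ideal vertices are exactly where completeness of the glued space can fail: in the general Poincar\'e polyhedron theorem with arbitrary side pairings one must impose an extra completeness (horosphere/cusp-cycle) hypothesis, and the statement is false without it. For reflection groups completeness at the cusps does hold automatically --- the reflections in the two sides asymptotic to an ideal vertex fix that vertex and preserve every horocycle centered there, so the glued cusp in $X$ is a complete quotient of a horoball by an infinite dihedral group --- but this is a step that must be identified and proved; it does not follow from the conditions you invoke.

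There is also a logical wobble at the end. In the developing-map approach, simple connectivity of $X$ is a \emph{by-product}, not an obstacle: once $D$ is a covering map (local isometry from a complete manifold) and $\DD$ is simply connected, $D$ is an isometry, hence $X\cong\DD$ is simply connected, and injectivity of $\phi$ follows because $\phi(g)=\mathrm{id}$ would force $(g,x)\sim(e,x)$ for interior points $x$, where the identifications are trivial. The van Kampen-style reduction of loops to edge relations is the \emph{alternative}, combinatorial proof; presenting it as the missing heart of the argument you actually chose conflates the two routes. Finally, your justification of proper discontinuity --- only finitely many $gP$ meet a compact $K$ ``since $P$ has finite volume'' --- is not valid: infinitely many tiles with pairwise disjoint interiors could each meet $K$ in arbitrarily small sets, so no volume count rules this out (and for noncompact $P$ the usual diameter argument is unavailable). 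Local finiteness should instead be transported from the tiling of $X$, which is locally finite by construction, through the homeomorphism $D$.
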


As corollaries, $\Gamma_0$ has $\alpha_1,\hdots,\alpha_{h}$ as cusps,
and the fundamental domain for $\Gamma_0$ has volume $V(\Gamma_0)=\pi(h-2)$ by Gauss-Bonnet.

For our  purpose, we want to take the orientation preserving subgroup of $\Gamma_0$, which we denote by $\Gamma$.  The group $\Gamma$ is a free, index-2 subgroup of $\Gamma_0$ generated by $S_{2}S_{1},\hdots, S_{h}S_{1}$.  To see $\Gamma$ is free, we apply the Pingpong Lemma to $S_{i}S_{1}$, which maps $\bt_{S_j}$ to $\bt_{S_i}$ when $j\neq i$. 
From the properties of $\Gamma_0$, we immediately see: 
\begin{prop}\label{propertyofgamma}
$\Gamma$ has cusps at $\alpha_i$, with stabilizer $\Gamma_{\alpha_i}=\langle S_iS_{i+1}\rangle$. The area of a fundamental domain is  $\area(\Gamma)=2\pi(h-2)$.
\end{prop}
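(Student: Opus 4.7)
The plan is to deduce all three assertions from the corresponding structure of $\Gamma_0$ already recorded in the preceding corollary, using only the index-$2$ inclusion $\Gamma \subset \Gamma_0$ into the orientation-preserving subgroup (established in the paragraph just before the proposition). The area computation is then immediate by multiplicativity of covolume under finite-index inclusion:
\[
\area(\Gamma) = [\Gamma_0:\Gamma] \cdot \area(\Gamma_0) = 2\pi(h-2).
\]

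For the cusps and their stabilizers, I would work with the fundamental polygon $\mathcal{F}_0$, whose ideal vertices are precisely $\alpha_1, \dots, \alpha_h$. At each $\alpha_i$ two of the bounding geodesics $g_j$ meet, namely the two whose corresponding triangular gaps share the vertex $\alpha_i$; the stabilizer of $\alpha_i$ in $\Gamma_0$ is then the infinite dihedral group generated by the two reflections across those geodesics. Intersecting this dihedral stabilizer with $\Gamma$ kills the reflections and retains their product, yielding the infinite cyclic group $\langle S_i S_{i+1}\rangle$ after the labeling convention of the paper. Since passing to a finite-index subgroup never enlarges the set of cusp points on $\partial \mathbb{D}$, no further cusps appear, and this accounts for the entire cusp set.

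The one step requiring an actual hyperbolic-geometry input, and the only real obstacle, is verifying that $S_i S_{i+1}$ is \emph{parabolic}, so that $\alpha_i$ is genuinely a cusp and $\langle S_i S_{i+1}\rangle$ is a maximal parabolic subgroup rather than, say, an infinite cyclic group generated by a hyperbolic element. I would dispatch this via the standard trichotomy for compositions of two hyperbolic reflections: the product is elliptic, hyperbolic, or parabolic according as the two bounding geodesics cross inside $\mathbb{D}$, are ultraparallel, or share a single boundary endpoint. In our situation the two adjacent geodesics share only the point $\alpha_i\in\partial\mathbb{D}$, putting us in the parabolic case with fixed point exactly $\alpha_i$, which completes the argument.
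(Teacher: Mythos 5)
Your proposal is correct and follows essentially the same route as the paper: the paper obtains this proposition ``immediately'' from the properties of $\Gamma_0$ established via Poincar\'e's theorem together with the index-$2$ inclusion $\Gamma\subset\Gamma_0$, which is exactly your argument. The details you supply that the paper leaves implicit --- multiplicativity of covolume under finite-index inclusion, the infinite dihedral stabilizer of an ideal vertex of the reflection group $\Gamma_0$, and parabolicity of the product of reflections in two geodesics meeting only at a single boundary point --- are all correct, so your write-up is a sound fleshing-out of the paper's one-line deduction.
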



\section{The Uniform Distribution of Tangencies in  Circle Packings}\label{sec: uniform dist of tgts}
We now study the distribution of tangencies on a fixed circle from a circle packing.  Recall that  $\PP$ is generated from a finite circle packing $\KK$, which is associated to a triangulated spherical graph. We pick a circle $C_0\in \mathcal{P}$ and let $\PP_0$ be the collections of circles in $\mathcal{P}$ which are tangent to $C_0$.  Given    a bounded arc  $\mathcal{I}\subseteq C_0$, let 
$\mathcal{A}_{T,\mathcal{I}}$ be the collection of tangencies of circles from $\PP_0$, whose curvatures are bounded by $T$. 
Let $l$ be the standard arclength measure.  We will show   that  
\begin{thm}\label{equidistribution} 
As $T\to \infty$,
\al{\#\mathcal{A}_{T,\mathcal{I}}\sim l(\mathcal{I})c_{\PP,C_0}T.}
with  $c_{\PP,C_0}$ independent of $\mathcal{I}$ given by 
\begin{equation}\label{definitiond}
c_{\PP,C_0}=\frac{D}{2\pi^2(h-2)}, \quad D=\sum_{i=1}^{h}D_i \;,
\end{equation}
where $D_i$ is the hyperbolic area of the region bounded by $C_i$ and the two geodesics linking $\alpha_i$ to $\alpha_{i+1}$ and $\alpha_i$ to $\alpha_{i-1}$ (see Figure~\ref{fig:Dregions}). 

Moreover, $c_{\PP,C_0}$ is conformal invariant:  if $M\in SL(2,\CC)$,
$M(\mathcal{P})=\tilde{\mathcal{P}}$, and $M(C_0)=\tilde{C_0}$, then $c_{\tilde{\PP},\tilde{C_0}}=c_{\PP,C_0}$.  
\end{thm}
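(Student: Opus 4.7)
The plan is to reduce the count of $\#\mathcal A_{T,\mathcal I}$ to a lattice-point count in the Fuchsian group $\Gamma$ and invoke equidistribution of cusp orbits in the spirit of Good \cite{Go83}. Using the conformal invariance asserted in the theorem (which we verify at the end), we may assume at the outset that $\PP$ is a generalized Ford configuration, so that $C_0=\RR$ is the boundary of the upper half-plane $\mathbb H$ and $\Gamma\subset\mathrm{PSL}(2,\RR)$ is a non-uniform lattice with cusps at the tangencies $\alpha_1,\dots,\alpha_h$ of $C_1,\dots,C_h$ with $C_0$ (possibly with $\alpha_1=\infty$).

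The orbit analysis of \S\ref{sec:Construct} (for $\Gamma_0$, together with $[\Gamma_0:\Gamma]=2$) shows that $\PP_0$ is a finite disjoint union of $\Gamma$-orbits $\Gamma\cdot C_j$, each with parabolic stabilizer $\Gamma_j=\Gamma_{\alpha_j}$. For $\gamma=\begin{pmatrix} a & b \\ c & d \end{pmatrix}\in\Gamma$ with $\alpha_j\in\RR$ and $\gamma\alpha_j\ne\infty$, the image $\gamma C_j$ is tangent to $C_0$ at $(a\alpha_j+b)/(c\alpha_j+d)$ with curvature $\kappa_j(c\alpha_j+d)^2$, with an analogous formula at the cusp $\infty$. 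Hence
\[
\#\mathcal A_{T,\mathcal I} \;=\; \sum_{j} \#\bigl\{\gamma\in\Gamma_j\backslash\Gamma : \gamma\alpha_j\in\mathcal I,\ (c\alpha_j+d)^2<T/\kappa_j\bigr\} \;+\; O(1),
\]
and after conjugating $\alpha_j$ to $\infty$ these inequalities carve out a region in the bottom-row plane that dilates linearly in $\sqrt T$.

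Standard cusp lattice-point asymptotics for Fuchsian lattices (\cite{Go83}, or an Eskin--McMullen-type argument) yield that the $j$-th count above is asymptotic to $c_j(\mathcal I)\,T$, where $c_j(\mathcal I)$ is computable in terms of $l(\mathcal I)$, the cusp data at $\alpha_j$, and $\mathrm{area}(\Gamma)$. The geometric crux is to evaluate $c_j(\mathcal I)$ cleanly. Transporting the Euclidean region in the bottom-row plane back to $\mathbb H$, the condition $\kappa(\gamma C_j)<T$ selects exactly the $\Gamma$-translates falling in the cuspidal region at $\alpha_j$ bounded by the horocycle $C_j$ and the two geodesics from $\alpha_j$ to $\alpha_{j\pm 1}$; its hyperbolic area is by definition $D_j$, and converting Euclidean to hyperbolic area in the cuspidal chart produces a factor of $1/\pi$, giving
\[
c_j(\mathcal I) \;=\; \frac{l(\mathcal I)\,D_j}{\pi\cdot\mathrm{area}(\Gamma)}.
\]

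Summing over $j$ and substituting $\mathrm{area}(\Gamma)=2\pi(h-2)$ from Proposition \ref{propertyofgamma} produces the claimed asymptotic $\#\mathcal A_{T,\mathcal I}\sim D\,l(\mathcal I)\,T/(2\pi^2(h-2))$. Conformal invariance is almost tautological: any $M\in\mathrm{PSL}(2,\CC)$ maps $(\PP,C_0)$ to $(\tilde\PP,\tilde C_0)$ and conjugates $\Gamma$ to a Fuchsian group of the same covolume and cusp count $h$; moreover $M$ restricts to an isometry between the intrinsic hyperbolic metrics on the disks bounded by $C_0$ and $\tilde C_0$, so each $D_i$ is preserved, hence so is $c_{\PP,C_0}$. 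The main obstacle is the last geometric identification: it demands a careful change of variables in cuspidal coordinates at each $\alpha_j$, together with a version of the cusp lattice-point count sharp enough to apply uniformly to the (not necessarily radially symmetric) regions arising from the curvature bound and the arc $\mathcal I$.
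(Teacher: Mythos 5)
Your Ford-case outline follows the paper's own route: decompose $\PP_0$ into finitely many $\Gamma$-orbits, reduce to counting cosets of the cusp stabilizers subject to $\gamma\alpha_j\in\mathcal{I}$ and $\kappa_j(c\alpha_j+d)^2\le T$, invoke Good's theorem, and identify the resulting constant with $D_j/(\pi\area(\Gamma))$. (Two small points there: the circles correspond to left cosets $\Gamma/\Gamma_{\alpha_j}$, not $\Gamma_j\backslash\Gamma$; and the identification with $D_j$ is not a generic ``change of variables'' but hinges on choosing the scaling matrix $M_{\alpha_j}$ so that the conjugated stabilizer is generated by the unit translation, including the subtlety that $|M_{\alpha_j}(\alpha_{j+1})-M_{\alpha_j}(\alpha_{j-1})|=\tfrac12$ is only half the fundamental period. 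You defer this, but it is the computation the paper actually carries out.)

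The genuine gap is the conformal invariance step, which you invoke at the outset to reduce to a Ford configuration and then dismiss at the end as ``almost tautological.'' What you verify is only that the \emph{number} $D/(2\pi^2(h-2))$ is Möbius-invariant --- true, since $M$ is an isometry of the hyperbolic metrics, so each $D_i$ and $\area(\Gamma)$ are preserved. But the reduction requires more: if $\#\mathcal{A}_{T,\mathcal{I}}\sim l(\mathcal{I})\,c\,T$ holds for the Ford packing $(\PP,C_0)$, you must show $\#\mathcal{A}_{T,M(\mathcal{I})}\sim l(M(\mathcal{I}))\,c\,T$ for $(M(\PP),M(C_0))$. This does not follow from invariance of the formula, because Euclidean curvature is not Möbius-invariant: $\kappa(M(C))=\kappa(C)/|M'(\alpha)|+O(1)$, with $\alpha$ the tangency point, so the cutoff $\kappa\le T$ in the image packing pulls back to a threshold that \emph{varies along the arc}, and the set being counted is not the $M$-image of any single set $\mathcal{A}_{T',\mathcal{I}}$. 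The paper devotes \S~\ref{transfer} (Theorem~\ref{transferprinciple}) to exactly this: partition the arc into $u$ small pieces, on each treat $|M'|$ as constant, apply the Ford-case asymptotic with the local threshold $|M'(\alpha_i)|T$ on the local subarc (this is where linearity in $l(\mathcal{I})$, i.e.\ uniform distribution, is essential), then let $T\to\infty$ followed by $u\to\infty$. Without this localization argument, your proof establishes the theorem only for generalized Ford packings together with the invariance of the constant's formula, not the full statement; and you have mislocated the ``main obstacle,'' which is this transfer rather than the cuspidal area identification.
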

We deduce Theorem~\ref{equidistribution}  from some old results of A.~Good \cite{Go83}, 
but it is in principle known; for instance it is a special case of  \cite{OS}. 

We will first prove Theorem~\ref{equidistribution} for a generalized Ford packing, when the initial configuration contains two parallel lines, one of them being the base circle $C_0$. The general case will follow by conformal invariance (Theorem~\ref{transferprinciple}) whose proof is deferred to \S~\ref{transfer}. 
For an explicit   formula for the areas $D_i$ in the case of a Ford configuration, see \eqref{definitiondford}.

\subsection{Ford configurations}
We first show that any of our initial configurations may be brought into a standard shape, that  of a (generalized) Ford configuration, see Figure~\ref{fig:Apollonian-3fordbis}. 
\begin{figure}[h]
\begin{center}
\includegraphics[width=7cm]{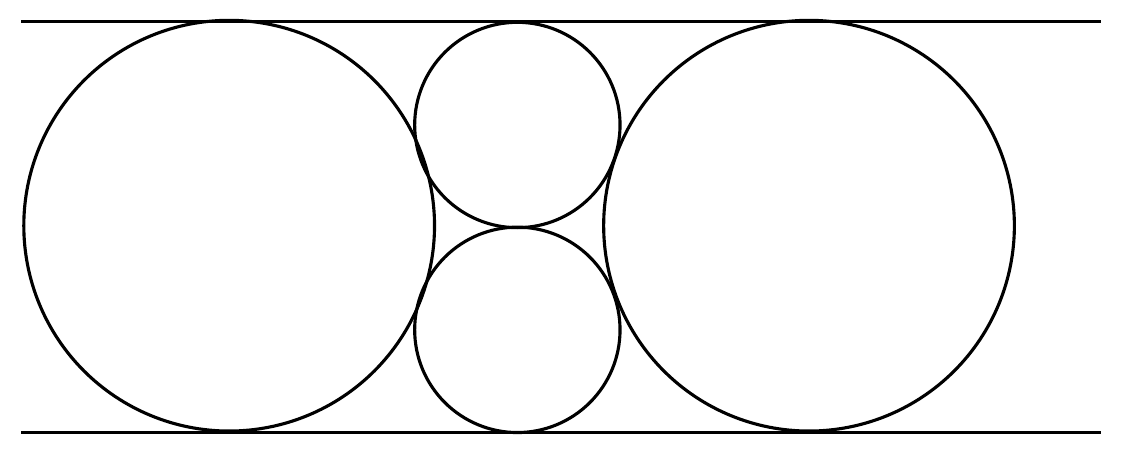}
\caption{The generalized Ford configuration corresponding to the Apollonian-3 configuration of Figure~\ref{fig configurations two}.}
 \label{fig:Apollonian-3fordbis}
\end{center}
\end{figure}
\begin{lem}
We can map the initial configuration $\KK$ by using a M\"obius transformation and possibly a reflection, to one which includes two parallels $C_0=\RR$ and $C_1=\RR+\ii$, two circles tangent to both of $\RR$ and $\RR+\ii$ with their tangencies on $\RR$ is 0 and $\frac{t}{2}$ respectively and some circles in between.
\end{lem}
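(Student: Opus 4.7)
The plan is to reveal a generalized Ford configuration by sending a suitable tangency point of $\KK$ to $\infty$ and then normalizing. Since the tangency graph $G$ is a triangulation of $S^2$, every vertex has degree at least three, so $C_0$ has at least one neighbour in $\KK$; pick one, call it $C_1$, and let $p \in \CC$ denote the tangency point of $C_0$ and $C_1$.

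First I would apply a M\"obius transformation $\Phi$ with $\Phi(p) = \infty$. Since the only circles of $\KK$ through $p$ are $C_0$ and $C_1$ (disjointness of interiors), their images are distinct parallel lines while every other circle of $\KK$ maps to a bounded round circle. Post-composing $\Phi$ with an orientation-preserving affine similarity $z \mapsto \lambda z + b$ with $\lambda \in \CC^{\times}$, I arrange that $\Phi(C_0) = \RR$ and $\Phi(C_1) = \RR + \ii$; the freedom of adding $\pi$ to the argument of $\lambda$ is exactly what allows placing $C_1$ above rather than below $C_0$. Under this normalization the disks bounded by $C_0$ and $C_1$ become the half-planes $\{\Im z < 0\}$ and $\{\Im z > 1\}$, and the remaining circles of $\KK$ then land in the open strip $\{0 < \Im z < 1\}$.

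Next I locate the two circles of $\KK$ tangent to both $C_0$ and $C_1$. The edge of $G$ joining $C_0$ and $C_1$ is incident to exactly two triangular faces (the crucial consequence of $G$ being a triangulation of $S^2$), and the third vertex of each face gives such a circle; call them $C'$ and $C''$. After $\Phi$ these are round circles tangent to both parallel lines, hence of radius $\tfrac{1}{2}$ with centres on $\{\Im z = \tfrac{1}{2}\}$, so their tangency points $s', s''$ with $\RR$ are two distinct real numbers. A horizontal translation moves $s'$ to $0$; if the resulting $s''$ is negative, I post-compose with the anti-holomorphic reflection $z \mapsto -\bar{z}$, which preserves $\RR$ and $\RR + \ii$ setwise and fixes $0$. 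This reflection is precisely the ``possibly a reflection'' of the statement, and setting $t = 2|s''|$ yields the required normal form.

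The only mildly delicate step is the bookkeeping right after $\Phi(p) = \infty$: one must check that the two disks bounded by $C_0$ and $C_1$ map to opposite (unbounded) half-planes rather than to the same side. This is however automatic, since $\Phi$ preserves disjointness of interiors and $p$ lies on both boundaries, so the two images must be disjoint half-planes tangent at $\infty$; the ambiguity of which is on top is then resolved by the $\pi$-rotation in the similarity step, as above.
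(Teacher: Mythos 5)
Your normalization is sound and runs broadly parallel to the paper's: the paper pins down the M\"obius map by sending the three tangency points of a face $(C_0,C_1,C_2)$ to $\infty,0,\ii$, while you send only the tangency $p=C_0\cap C_1$ to $\infty$ and then adjust by a similarity, a translation, and possibly $z\mapsto-\bar z$; either way one gets the two parallel lines, and your use of the reflection is legitimate. However, there is a genuine omission: you never prove the final clause of the lemma, namely that \emph{all} remaining circles of $\KK$ land horizontally between the two circles tangent to both lines, i.e.\ that the configuration really is ``two lines, two circles tangent to both at $0$ and $t/2$, and some circles in between.'' You produce $C'$ and $C''$ as the third vertices of the two faces of $G$ incident to the edge $C_0C_1$, but nothing in your write-up rules out a circle of $\KK$ sitting in the strip to the left of $C'$ or to the right of $C''$. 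This betweenness is not decorative: the later sections rely on it (the chain structure of $\KK_0$ around $C_0$, the reflection $S_h$ in the rightmost gap, the fundamental domain). Your closing paragraph instead flags as ``the only delicate step'' the placement of the two half-planes, which is the easy part.

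The gap is fillable with the objects you already introduced but did not exploit. The two faces incident to the edge $C_0C_1$ correspond not merely to triangles in the abstract graph but to triangular \emph{gaps} of the packing: connected components of $\widehat\CC\setminus\bigcup_{C\in\KK}C$ whose boundary meets only the three circles of the face and which have $p$ as a vertex. After $p\mapsto\infty$, these are exactly the two unbounded components of the strip minus the disks, one extending to $\Re z\to-\infty$ and one to $\Re z\to+\infty$. Since the left unbounded component is bounded by $C_0$, $C_1$ and $C'$ alone, no other circle of $\KK$ can lie to the left of $C'$; symmetrically nothing lies to the right of $C''$, so every other circle lies between them, and your translation/reflection then puts the picture in the stated normal form. (The paper reaches the same conclusion by a slightly different route: after the three-point normalization it uses the reflection to force the distinguished triangular gap into the half-plane $\Re z<0$, so all other circles fall into the right-hand gap of $\tilde C_2$, and triangularity of gaps then forces a rightmost circle $\tilde C_h$ tangent to both lines with everything else between $\tilde C_2$ and $\tilde C_h$.) One further small point: you should note that $C'\neq C''$, which holds because a triangulation of $S^2$ has at least four vertices — the same fact the paper invokes.
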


\begin{proof}
We may use a M\"obius transformation of the Riemann sphere to map the configuration to the interior of the strip $0\leq \Im z\leq 1$ as follows: Pick a triangle in the tangency graph $G$ one of whose vertices corresponds to $C_0$, and whose other vertices corresponds to two other circles $C_1$ and $C_2$, so that $C_0$, $C_1$, $C_2$ form a triple of mutually tangent circles. Setting $\alpha_1=C_0\cap C_1$, $\alpha_2 = C_0\cap C_2$, $\beta  = C_1\cap C_2$ to be the tangency points, corresponding to the edges of the triangle, there is a (unique) M\"obius transformation $M\in SL(2,\CC)$ which maps 
$$ M: \alpha_1\mapsto \infty,\quad \alpha_2\mapsto 0,\quad \beta\mapsto \ii$$ 
  Then $M$ will map $C_0$ and $C_1$ to a pair of tangent lines passing through $0$ and $\ii$ respectively, and $C_2$ will be mapped to a circle tangent to both at these points. Hence $M$ will map   $C_0$ to the real axis $\tilde C_0=\RR$, $C_1$ to the line $\tilde C_1=\RR+\ii$ and $C_2$ to the circle $\tilde C_2=C(\frac{\ii}{2},\frac 12)$ centered at $\ii/2$ and having radius $1/2$, and the remaining circles all lie in the strip $0\leq \Im(z)\leq 1$. 

 The triangular gap $\bt$  between   $C_0$, $C_1$, $C_2$  is mapped into one of the two triangular gaps forming the complement in the strip $0\leq \Im(z)\leq 1$ of the circle $\tilde C_2=C(\frac{\ii}{2},\frac 12)$. After applying if necessary the reflection $z\mapsto -\overline{z}$, we may assume that $\bt$ is mapped to the gap in the half plane $\Re(z)<0$, so that all the other circles in $\mathcal{K}$ are mapped into the gap with $\Re(z)>0$.  Since any triangulation of the sphere has at least $4$ vertices, there must be at least one such circle distinct from $C_0$, $C_1$, $C_2$. Since in the resulting configuration the gaps are still triangular, that means that there is one other circle tangent to both $\RR$ and $\RR+\ii$, say $\tilde C_h=C(\frac{t}{2}+\frac{\ii}{2},\frac 12)$ ($t\geq 2$)  and that the images of all the other circles are mapped to circles lying between $\tilde C_2$ and $\tilde C_h$, as in Figure~\ref{fig:Apollonian-3fordbis}. 
 \end{proof}

\subsection{Uniform distribution of tangencies for generalized Ford packings} \label{fordequidistribution}
We carry over all previous notations. The initial configuration $\KK$ includes two parallel lines $C_0=\RR$ and $C_1=\RR+\ii$, two circles $C_2=C(\frac{\bd{i}}{2},\frac{1}{2})$ and $C_h=C(h+\frac{\bd{i}}{2},\frac{1}{2})$ tangent to both of $\RR$ and $\RR+\ii$, and some circles in between. Our base circle $C_0$ is $\RR$, so $\KK_0=\KK\cap \mathcal{P}_0$ and $\mathcal{P}_0$ is the collection of all circles tangent to $\RR$.  The reflection $S_h$ corresponds to the triangle formed by $\RR$,$\RR+\ii$ and $C_h$.  Applying $S$ to $\KK$, the fundamental domain for $\Gamma$ is formed by geodesics connecting tangencies of $\KK$ and $S\KK$ on $\RR$.  
Each tangency is a parabolic point for $\Gamma$, and since $S_hS_1 0=t$, we have $\Gamma_\infty = \langle S_hS_1 \rangle \pm \begin{pmatrix} 1&t\ZZ\\ & 1\end{pmatrix}$, from Proposition \ref{propertyofgamma}. 

 We record a  computation which will be used at several places: 
\begin{lem} \label{curvatureformula}
Let $M=\mat{a&b\\c&d}\in SL(2,\RR)$. 
i) If $c\neq 0$, then under the M\"obius transform $M$, 
a circle $C(x+y\bd{i}, y)$ will be mapped to the circle 
\al{\label{transformula}C\left(\frac{ax+b}{cx+d}+\frac{r\bd{i}}{(cx+d)^2}, \frac{r}{(cx+d)^2}\right)} 
if $cx+d\neq 0$, 
and to the line $\Im z=1/ 2c^2y$ if $cx+d=0$.
When $c=0$, the image circle is 
\begin{equation*}
C(\frac{ax+b}{d},\frac{r}{d^2}) \;.
\end{equation*}
ii) If $c\neq 0$ then the lines (degenerate circles) $C=\RR+\ii y$ are mapped to $C(\frac ac+\ii \frac 1{2c^2y}, \frac 1{2c^2y})$, and to the line $\RR+a^2 y\ii$ if $c=0$.    
\end{lem}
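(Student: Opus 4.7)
This is a direct verification based on three elementary facts about $M\in SL(2,\RR)$: it acts on $\widehat\CC$ by a M\"obius transformation and sends generalized circles to generalized circles; it preserves $\RR\cup\{\infty\}$; and the determinant relation $ad-bc=1$ collapses many numerators. I would handle both parts by first determining what kind of object the image is (a horocycle tangent to $\RR$ at a finite point, or a horizontal line), based on where the distinguished point --- the tangency of the circle with $\RR$ in part (i), or $\infty$ in part (ii) --- maps under $M$, and then pinning down the one remaining parameter (the Euclidean radius or the height) by evaluating $M$ at a single carefully chosen second point.

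For part (i), the circle $C(x+y\ii,y)$ is tangent to $\RR$ at $x$. When $cx+d\neq 0$ the image is a horocycle tangent to $\RR$ at $Mx=(ax+b)/(cx+d)$ of some unknown radius $r'$; I would determine $r'$ by plugging in the top $z_0=x+2y\ii$ of the original horocycle and using the constraint that $Mz_0$ lies on the circle of radius $r'$ centered at $Mx+r'\ii$, which gives $r'=|Mz_0-Mx|^2/(2\Im(Mz_0-Mx))$. The algebra is clean because the numerator of $Mz_0-Mx$ collapses to $2y\ii(ad-bc)=2y\ii$, after which one arrives at $r'=y/(cx+d)^2$. When $cx+d=0$, the point $x$ maps to $\infty$, so the image is a horizontal line; substituting $x=-d/c$ and using $ax+b=(bc-ad)/c=-1/c$ to simplify $Mz_0$ yields $Mz_0=a/c+\ii/(2c^2y)$, fixing the height at $1/(2c^2y)$. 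The case $c=0$ reduces to the fact that $Mz=a^2z+ab$ is affine, from which the image circle is read off immediately.

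For part (ii), the line $\RR+y\ii$ is a horocycle based at $\infty$. When $c\neq 0$, the image is a horocycle tangent to $\RR$ at $M\infty=a/c$. I would parametrize the line by $z=x+y\ii$, $x\in\RR$, and rationalize to obtain
\[
\Im(Mz)=\frac{y}{(cx+d)^2+c^2y^2}.
\]
The top of the image horocycle is the point of maximum imaginary part (directly above $a/c$), so its height equals the maximum of the right-hand side over $x\in\RR$; this maximum is $1/(c^2y)$, attained at $x=-d/c$, giving the advertised radius $1/(2c^2y)$. The $c=0$ case is again immediate from $Mz=a^2z+ab$, which sends $\RR+y\ii$ to $\RR+a^2y\ii$.

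The only step that calls for any care is the $cx+d=0$ subcase of part (i), where recognizing the identity $ax+b=-1/c$ (via $ad-bc=1$ and $x=-d/c$) is essential to produce the clean final expression; everything else is purely mechanical bookkeeping.
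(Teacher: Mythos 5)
Your proposal is correct, and the computations it outlines do go through (the determinant collapse $(az_0+b)(cx+d)-(ax+b)(cz_0+d)=(ad-bc)(z_0-x)$, the evaluation at the pole in the $cx+d=0$ subcase, and the maximization of $\Im(Mz)=y/((cx+d)^2+c^2y^2)$ in part (ii) all check out), but it takes a genuinely different route from the paper. The paper's proof is a reduction via the Bruhat decomposition: for $c\neq 0$ it writes
\[
M=\mat{1&a/c\\0&1}\mat{c^{-1}&0\\0&c}\mat{0&-1\\1&0}\mat{1&d/c\\0&1},
\]
and pushes the circle or line through a translation, the inversion $z\mapsto -1/z$, a dilation, and another translation, each of which acts on circles by a trivially explicit formula; the $c=0$ case is a direct check. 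You instead invoke the qualitative theory of M\"obius maps --- preservation of generalized circles, of $\RR\cup\{\infty\}$, of tangency, and of the upper half-plane --- to conclude that the image of a horocycle is again a horocycle based at the image of its base point, and then pin down the single remaining parameter (radius or height) by evaluating $M$ at one well-chosen point. The paper's route buys pure mechanics: no qualitative facts are needed, and the same decomposition is a reusable device. Your route buys shorter algebra and a conceptual explanation of why the answer has the shape it does, at the price of the qualitative step, which deserves one explicit sentence in a final write-up: in part (i) the image is a bounded circle rather than a line precisely when $cx+d\neq 0$, because the only point of $C(x+y\ii,y)$ on $\RR\cup\{\infty\}$ is $x$, so the image meets $\RR\cup\{\infty\}$ only at $Mx$, whereas any line meets $\RR\cup\{\infty\}$ at $\infty$. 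One cosmetic remark: the radius $y/(cx+d)^2$ you obtain is the correct reading of the displayed formula in part (i) of the statement, where the ``$r$'' is a typo for ``$y$'' (equivalently, the circle should be written $C(x+r\ii,r)$).
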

\begin{proof}
For both i) and ii), if $c\neq 0$, we use the Bruhat decomposition to write $M$ as F
$$M=\mat{1&a/c\\0&1}\mat{c^{-1}&0\\0&c}\mat{0&-1\\1&0}\mat{1&d/c\\0&1}$$
For each factor above the transformation formula is simple, then the curvature of the image circle is obtained by composing these simple formulae together.  The case  when $c=0$ is a simple check.  
\end{proof}

To deduce Theorem \ref{equidistribution}, we need to calculate the contribution from each $C_i\in\KK_0$. For $i\neq 1$, write 
  $C_i=C(\alpha_i+r_i\bd{i},r_i)$. From Lemma \ref{curvatureformula} it follows that $\gamma=\mat{a_{\gamma}&b_{\gamma}\\c_{\gamma}&d_{\gamma}}\in SL(2,\RR)$ sends $C_i$, $i\neq 0,1$, to a circle of curvature $\frac{(c_{\gamma}\alpha_i+d_{\gamma})^2}{r_i}$ and the line $C_1=\RR+\ii $ to a circle of curvature $2c_\gamma^2$.  Therefore, for $i\neq 1$, we need to calculate
\begin{equation}{\label{201502231}
\#\{  \gamma\in\Gamma/\Gamma_{\alpha_i}}: \frac{(c_{\gamma}\alpha_i+d_{\gamma})^2}{r_i}\leq T, \quad 
\frac{a_{\gamma}\alpha_i+b_{\gamma}}{c_{\gamma}\alpha_i+d_{\gamma}}\in\mathcal{I}\}
\end{equation}
and for $i=1$, we need to calculate
\begin{equation}\label{201506281}
\#\{\gamma\in\Gamma/\Gamma_{\infty}: 2c_{\gamma}^2\leq T, \frac{a_{\gamma}}{c_{\gamma}}\in\mathcal{I}\}.
\end{equation}
This counting problem is a special case of a theorem of A.~Good, (see the corollary on Page 119 of \cite{Go83}), which we quote here:
\begin{thm}[Good, 1983] \label{anton}
Let $\Gamma$ be a lattice in $PSL(2,\RR)$. Suppose $\infty$ is a cusp of $\Gamma$ with stabilizer $\Gamma_{\infty}$ and ${\xi}$ is any cusp of $\Gamma$ with stabilizer $\Gamma_{\xi}$.  Choose $M_{\xi}$ so that $M_{\xi}(\xi)=\infty$.  Any $\gamma\not\in \Gamma_{\infty}$ can be written uniquely in the form $\pm\mat{1&\theta_1\\0&1}\mat{0&-\nu^{-\frac{1}{2}}\\\nu^{\frac{1}{2}}&0}\mat{1&\theta_2\\0&1}$ with $\nu>0$, so this determines functions $\theta_1,\theta_2,\nu$ on $PSL(2,\RR)$.  Let $\mathcal{I},\mathcal{J}$ be two bounded intervals in $\RR$. 

Then as $T\to \infty$, 
$$ 
\#\left\{ \gamma^{'}\in  \Gamma M_{\xi}^{-1} :  \nu(\gamma^{'})\leq T,\;  \theta_1(\gamma^{'})\in\mathcal{I},\; \theta_2({\gamma^{'}})\in\mathcal{J}  \right\}
\sim \frac{l(\mathcal{I})l(\mathcal{J})T}{\pi \area(\Gamma)}.
$$
\end{thm}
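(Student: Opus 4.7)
The plan is to recast the count as a lattice point count for the $\Gamma$-action on $\mathrm{PSL}(2,\RR)$ in a region cut out by the Bruhat coordinates, and then to extract the asymptotic using the spectral theory of automorphic forms on $\Gamma\backslash\mathbb{H}$, in the spirit of Selberg's method for Poincar\'e series.

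First I would conjugate by $M_\xi$ so that both relevant cusps are at $\infty$ on the two sides, and do the matrix multiplication in the stated Bruhat decomposition. A direct check gives $\nu(\gamma') = c^2$, $\theta_1(\gamma') = a/c$, $\theta_2(\gamma') = d/c$ for $\gamma' = \begin{pmatrix} a & b \\ c & d \end{pmatrix}$, so the three conditions read $|c|\le T^{1/2}$, $a/c\in\mathcal{I}$, $d/c\in\mathcal{J}$. The left action of $\Gamma_\infty$ shifts $\theta_1$ by the cusp width at $\infty$ and the right action of $M_\xi\Gamma_\xi M_\xi^{-1}$ shifts $\theta_2$ by the cusp width at $\xi$, so after dividing by these two stabilizers the counting problem lives naturally on the double coset space; once $\mathcal{I}, \mathcal{J}$ are shorter than the respective cusp widths there is no double counting.

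Second, I would smooth the three indicators by test functions $\psi(\nu/T)$, $\phi_1(\theta_1)$, $\phi_2(\theta_2)$, expand $\phi_1, \phi_2$ in Fourier series on the respective horocycles, and recognize the resulting sum as a spectral integral against the Fourier coefficients of the Eisenstein series $E_\infty(z,s)$ at the cusp $\xi$. Concretely, Mellin inverting the $\nu$-cutoff produces an integral of the form
\begin{equation*}
\frac{1}{2\pi i}\int_{\Re(s)=2} \hat{\psi}(s)\, B(s; \phi_1, \phi_2)\, T^{s}\, ds,
\end{equation*}
where $B(s; \cdot, \cdot)$ is a bilinear form assembled from the Fourier coefficients of $E_\infty(\cdot, s)$ at $\xi$. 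The main term is extracted by shifting the contour to $\Re(s)=1/2$ and picking up the simple pole at $s=1$; by Maass--Selberg the residue of $E_\infty$ there is the constant $1/\area(\Gamma)$, which paired with the zeroth Fourier modes $l(\mathcal{I})$, $l(\mathcal{J})$ of the test functions gives precisely $l(\mathcal{I})l(\mathcal{J})T/(\pi\,\area(\Gamma))$.

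The hardest step is to control the shifted contour on $\Re(s)=1/2$, along with the residues from cuspidal Maass eigenvalues, and then to remove the smoothing. The spectral error is handled via polynomial growth of Eisenstein series on vertical lines together with $L^2$-bounds on the non-trivial Fourier coefficients (essentially the Kloosterman spectral large sieve / Rankin--Selberg bounds); these give a power saving in $T$. Desmoothing is done by sandwiching $\mathbf{1}_{\mathcal{I}}, \mathbf{1}_{\mathcal{J}}$ between smooth majorants and minorants whose Mellin/Fourier transforms decay fast enough, optimizing the smoothing scale against the spectral error. Keeping the spectral bounds uniform in all parameters is where the real technical content lies, and this is the delicate part carried out in Good's paper.
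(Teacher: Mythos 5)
You should know at the outset that the paper does not prove this statement at all: it is quoted verbatim as a known result (the Corollary on p.~119 of Good's monograph \cite{Go83}), so there is no internal proof to compare your attempt against --- the authors simply apply it. Judged on its own terms, your outline is a correct reconstruction of the sort of argument that lives in the cited source. The Bruhat computation is right: multiplying out $\pm\begin{pmatrix}1&\theta_1\\0&1\end{pmatrix}\begin{pmatrix}0&-\nu^{-1/2}\\ \nu^{1/2}&0\end{pmatrix}\begin{pmatrix}1&\theta_2\\0&1\end{pmatrix}$ gives lower-left entry $\pm\nu^{1/2}$, so indeed $\nu=c^2$, $\theta_1=a/c$, $\theta_2=d/c$, exactly the identification the paper itself uses when applying the theorem. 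The double-coset reduction (left $\Gamma_\infty$-action translating $\theta_1$ by the cusp width at $\infty$, right $M_\xi\Gamma_\xi M_\xi^{-1}$-action translating $\theta_2$ by the width at $\xi$) is also correct, and the main term coming from the pole at $s=1$ produces the right constant: a sanity check with $\Gamma=PSL(2,\ZZ)$, $\xi=\infty$ gives $\sim \tfrac{3}{\pi^2}\,l(\mathcal I)l(\mathcal J)T$, matching $l(\mathcal I)l(\mathcal J)T/(\pi\operatorname{area}(\Gamma))$ with $\operatorname{area}=\pi/3$. One gloss worth flagging: the factor $1/\pi$ does not come directly from pairing the Eisenstein residue $1/\operatorname{area}(\Gamma)$ with the zeroth Fourier modes; it enters through the Gamma factor $\sqrt{\pi}\,\Gamma(s-\tfrac12)/\Gamma(s)$ (equal to $\pi$ at $s=1$) that relates the Kloosterman--Selberg zeta function $\sum_c S_{\infty\xi}(0,0;c)c^{-2s}$, whose partial sums you are actually counting, to the scattering entry $\phi_{\infty\xi}(s)$. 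Finally, your sketch explicitly delegates the genuinely hard part --- uniform control of the spectral terms on $\Re(s)=1/2$ and the desmoothing --- to Good's paper; as a standalone proof that is a real gap, but it mirrors precisely what the paper does by citation, and the skeleton you would hang those estimates on is the correct one.
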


Now apply Theorem~\ref{anton} to deduce Theorem \ref{equidistribution}.  For $i\neq 1$, we set $\xi=\alpha_i$, and 
$$
M_{\alpha_i}=\mat{a_0&b_0\\-s&s\alpha_i}, \quad s:= \left|\frac{2(\alpha_{i+1}-\alpha_{i-1})}{(\alpha_{i+1}-\alpha_{i})(\alpha_{i-1}-\alpha_{i})}\right|^{1/2},
$$ 
so that  $|M_{\alpha_i}(\alpha_{i+1})-M_{\alpha_i}(\alpha_{i-1})|=\frac{1}{2}$.  Therefore,  
$$
M_{\alpha_i}\Gamma_{\alpha_i} M_{\alpha_i}^{-1}=\left\{\pm\mat{1&n\\0&1}:n\in\ZZ\right\}:=B.
$$  
because $|M_{\alpha_i}(\alpha_{i+1})-M_{\alpha_i}(\alpha_{i-1})|$ contributes half of the fundamental period of the stabilizer at $\infty$ of $M_{\alpha_i}\Gamma M_{\alpha_i}^{-1}$; the other half comes from $|M_{\alpha_i}S_{i+1}(\alpha_{i-1})-M_{\alpha_i}(\alpha_{i+1})|$. 

The map $\gamma\rightarrow \gamma^{'}=\gamma M_{\alpha_i}^{-1}$ is a bijection of the cosets $\Gamma/\Gamma_{\alpha_i}$ and $\Gamma M_{\alpha_i}^{-1}/B$. 
Write $\gamma^{'}=\mat{a_{\gamma^{'}}&b_{\gamma^{'}}\\c_{\gamma^{'}}&d_{\gamma^{'}}}$, then 
$$ 
a_{\gamma^{'}}=(a_{\gamma}\alpha_i+b_{\gamma})s, \quad c_{\gamma^{'}}=(c_{\gamma}\alpha_i+d_{\gamma})s
$$
(when either $\alpha_{i-1}$ or $\alpha_{i+1}$ is $\infty$, this expression is interpreted as a limit).
Then the condition $\frac{a_{\gamma}\alpha_i+b_{\gamma}}{c_{\gamma}\alpha_i+d_{\gamma}}\in\mathcal{I}$ from \eqref{201502231} is the same as $\frac{a_{\gamma^{'}}}{c_{\gamma^{'}}}\in\mathcal{I}$, or $\theta_1(\gamma^{'})\in\mathcal{I}$. The condition $\frac{(c_{\gamma}\alpha_i+d_{\gamma})^2}{r_i}\leq T$ is the same as $c_{\gamma^{'}}^{2}\leq r_is^2T$, or $\nu(\gamma^{'})\leq r_is^2T$.   

Therefore, counting \eqref{201502231} is the same as counting 
$$\#\{\gamma^{'}\in\Gamma M_{\alpha_i}^{-1}:\nu(\gamma^{'})\leq r_i s^2 T,\theta_1(\gamma^{'})\in\mathcal{I},\theta_2(\gamma^{'})\in[0,1)\}$$
Recall the area of $\Gamma$ is $2\pi(h-2)$.  Then by setting $\mathcal{J}=[0,1)$, from Theorem~\ref{anton}  we obtain 
 \aln{\eqref{201502231}\sim\frac
{l(\mathcal{I})r_i s^2}{2\pi^2(h-2)}T\quad \mbox{as } T\to \infty\;.}

We interpret this in conformally invariant terms: 
Let $D_i$ be the area of the region bounded by $C_i$ and the two geodesics $g_{i-1}$, $g_i$ (see Figure~\ref{fig:Dregions}). 
\begin{figure}[h]
\begin{center}
\includegraphics[width=10cm]{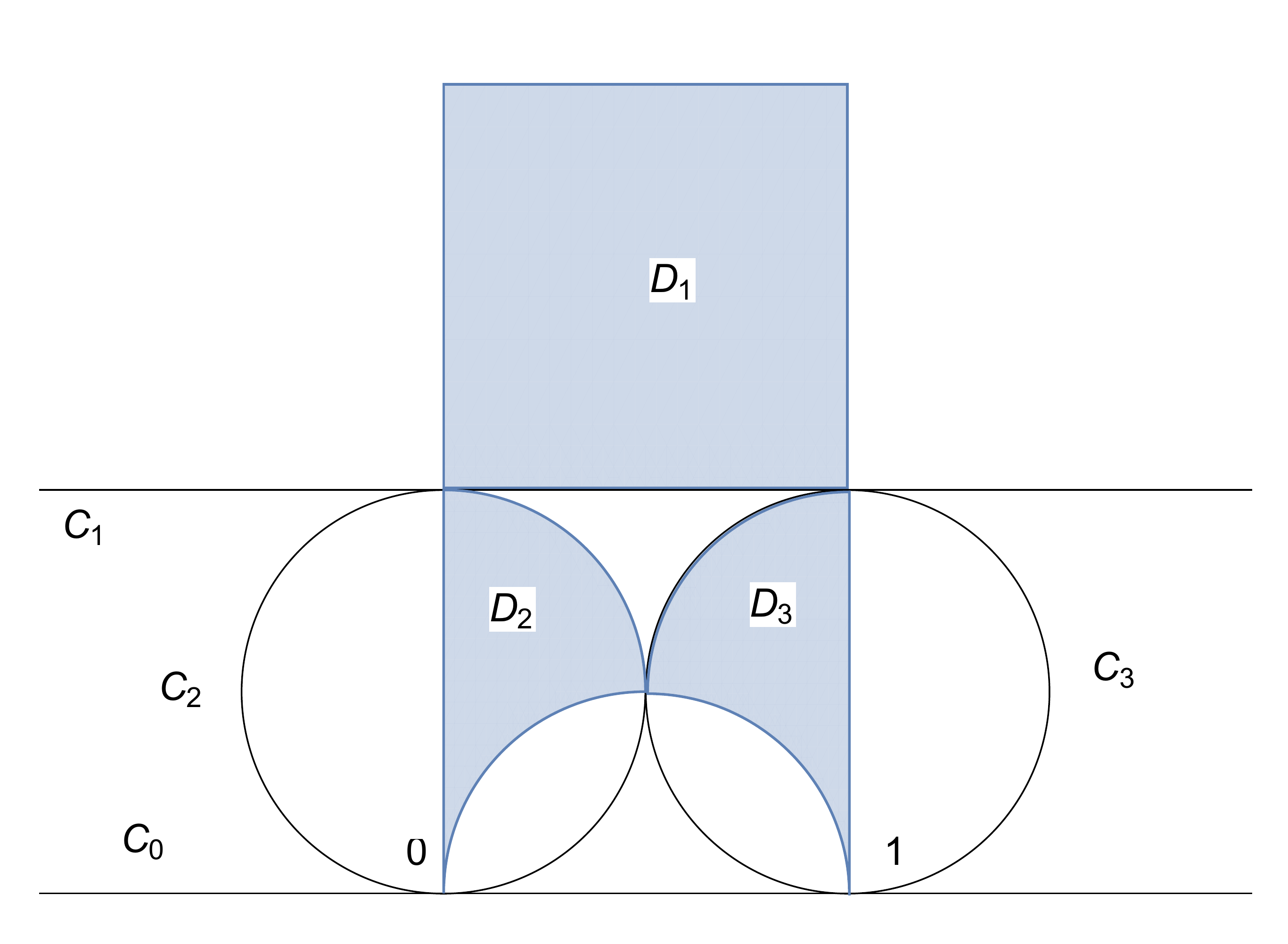}
 \caption{The regions $D_i$  (shaded) bounded by the circles $C_i$ in the initial configurations, and by the two geodesics linking $\alpha_i$ to $\alpha_{i-1}$ and to $\alpha_{i+1}$. Displayed is an initial configuration, generating the classical Ford circles, consisting of two parallel lines $C_0=\RR$ and $C_1=\RR+\ii$, and two mutually tangent circles $C_2=C(\frac{\ii}{2},\frac 12)$ and $C_3=C(1+\frac{\ii}{2},\frac 12)$. Here $\alpha_1=\infty$, $\alpha_2=0$, $\alpha_3=1$. }
 \label{fig:Dregions}
\end{center}
\end{figure}
Then  $M_{\alpha_i}$ will translate this region to a triangle with vertices $\infty$, $M_{\alpha_i}(\alpha_{i+1})$, and $M_{\alpha_i}(\alpha_{i-1})$.  Note that $M_{\alpha_i}(\alpha_{i}+2r\bd{i})$ is a point on the segment connecting $M_{\alpha_i}(\alpha_{i-1})$ and $M_{\alpha_i}(\alpha_{i+1})$, and  that $\Im M_{\alpha_i}(\alpha_i+2r_i\bd{i})=
\frac{1}{2s^2r_i}$.  We also have $|M_{\alpha_i}(\alpha_{i+1})-M_{\alpha_i}(\alpha_{i-1})|=\frac{1}{2}$.  Since $M_{\alpha_i}$ is area preserving, we have 
$$D_i=|M_{\alpha_i}(\alpha_{i+1})-M_{\alpha_i}(\alpha_{i-1})|\cdot 2s^2r_i=s^2r_i=\left|\frac{2r_i(\alpha_{i+1}-\alpha_{i-1})}{(\alpha_{i+1}-\alpha_{i})(\alpha_{i-1}-\alpha_{i})}\right| \;.
$$  
Hence we find 
  \aln{\eqref{201506281}\sim\frac
{l(\mathcal{I})D_i}{2\pi^2(h-2)}T\quad \mbox{as } T\to \infty\;.}

The case of $i=1$ is simpler.  Counting \eqref{201506281} is the same as counting 

\al{\label{horizontalcase}\#\{\gamma\in\Gamma: 2c_{\gamma}^2\leq T, \frac{a_{\gamma}}{c_{\gamma}}\in\mathcal{I}, \frac{d_{\gamma}}{c_\gamma}\in\mathcal{J}\}}
where $\mathcal{J}$ is a fundamental period of $\Gamma_{\infty}$, the length of which is twice the distance of $\alpha_h$ and $\alpha_2$, which is $2\alpha_h$.   
We obtain 
 \aln{\eqref{horizontalcase}\sim\frac
{l(\mathcal{I})2\alpha_h}{2\pi^2(h-2)}\frac{T}{2}\quad \mbox{as } T\to \infty\;.}
Noting that  the area of the region bounded by $C_1$ and the two geodesics $g_0=g_h$, $g_1$ is $D_1=\alpha_h$, we find 
 \aln{\eqref{horizontalcase}\sim\frac
{l(\mathcal{I})D_1}{2\pi^2(h-2)}T\quad \mbox{as } T\to \infty\;.}

Adding up the contributions from
all $C_i\in\KK_0$, we obtain
Theorem~\ref{equidistribution}, with the constant  
\begin{equation*}
c_{\PP,C_0}=\frac{D}{2\pi^2(h-2)}, \quad D=\sum_{i=1}^{h}D_i \;,
\end{equation*}
where $D_i$ is the hyperbolic area of the region bounded by $C_i$ and the two geodesics $g_{i-1}$ and $g_i$.   
 Explicitly, 
\begin{equation}\label{definitiondford}
D_1=\alpha_h,\quad D_i=2r_i\left|\frac{\alpha_{i+1}-\alpha_{i-1}}{(\alpha_{i+1}-\alpha_{i})(\alpha_{i-1}-\alpha_{i})}\right|,\quad i\neq 1
\end{equation} 
(when one of $\alpha_{i-1},\alpha_{i+1}$ is $\infty$, we take the appropriate limiting value of $D_i$).

\section{The Gap Distribution of Tangencies in Circle Packings} \label{fordgapdistribution}

We now study  the gap distribution on $C_0$.  
As in \S~\ref{sec: uniform dist of tgts}, Let $\mathcal{I}\subset C_0$ be an arc (or a bounded interval), and $\mathcal{A}_{T,\mathcal{I}} = \mathcal{A}_T\cap \mathcal{I}$ the tangencies in $\mathcal{I}$ whose circles have curvature at most $T$.  
Let $\{x_{T,\mathcal{I}}^{i}\}$ be the sequence of tangencies in
$\mathcal{A}_{T,\mathcal{I}}$ ordered by counter-clockwise direction. The
nearest-neighbour gaps, or spacings, between the tangencies are
$$d(x_{T,\mi}^i,x_{T,\mi}^{i+1})$$
and the mean spacing is
$$ \ave{d_{T,\mi}}:= \frac{l(\mi)}{\#\mathcal{A}_{T,\mi}}\;.
$$
We define the gap distribution function on
$\mathcal{I}$ to be
$$
F_{T,\mathcal{I}}(s)=\frac 1{\#\mathcal{A}_{T,\mi}}
\#\{x_{T,\mi}^i:\frac{d(x_{T,\mi}^i,x_{T,\mi}^{i+1})}{
\ave{d_{T,\mi}}}\leq s\} \;.
$$

We will find the limiting gap distribution of $\mathcal{A}_{T,\mi}$: 
\begin{thm}\label{gapdistribution}
There exists a continuous piecewise smooth function $F(s)$, which is independent of $\mi$ such that
$$\lim_{T\rightarrow\infty}F_{T,\mi}(s)=F(s).$$
The limiting distribution $F(s)$ is  conformal invariant:  let $M,\tilde{\PP},\tilde{C_0}$ be as in Theorem \ref{equidistribution}, and 
$\tilde{F}$ be the gap distribution function of $\tilde{C_0}$ from
$\tilde{\mathcal{P}}$, then $\tilde{F}=F$. 
\end{thm}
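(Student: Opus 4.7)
The plan is to reduce the gap distribution to a lattice-point counting problem for $\Gamma$ in regions of $\RR^2$ bounded by quadratic arcs, and then to invoke Theorem~\ref{anton} (Good) for the asymptotic count. Following the strategy of the introduction, each tangency $x = \gamma\alpha_i \in \mathcal{A}_{T,\mi}$ has its immediate right-neighbor $x^+ \in \mathcal{A}_T$ of the form $\gamma_*\alpha_j$ for some $\gamma_*\in\Gamma$ and $j$. A geometric analysis of the reflection group $\mathfrak S$ acting on $\KK$ (using the minimal-word description of Lemma~\ref{g1}) groups the consecutive pairs in $\PP_0$, up to $\Gamma$-equivalence, into types $\tau$ corresponding to specific reduced words in $\mathcal{S}_0$ describing how $x^+$ emerges from the triangular gap bordering $x$ on the right; only finitely many types should contribute to any bounded range of the normalized gap $s$.

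Using the Möbius identity $\gamma(\beta_1) - \gamma(\beta_2) = (\beta_1 - \beta_2)/((c_\gamma\beta_1 + d_\gamma)(c_\gamma\beta_2 + d_\gamma))$, the gap $x^+ - x$ at a type-$\tau$ pair is a rational function of $(c_\gamma, d_\gamma)$; combined with the curvature bound $(c_\gamma\alpha_i + d_\gamma)^2 \leq r_i T$ and the interval condition $\gamma\alpha_i \in \mi$, this places $(c_\gamma, d_\gamma)$ in a dilated region $\sqrt T \cdot \Omega_\tau(s) \subset \RR^2$ bounded by smooth arcs that vary piecewise-smoothly in $s$. Conjugating $\gamma$ by $M_{\alpha_i}^{-1}$ to put $\alpha_i$ at $\infty$ as in \S\ref{fordequidistribution}, and applying Theorem~\ref{anton} in a refined form (obtained by subtraction in the $\nu$-variable and Riemann-sum approximation) to each type, one obtains the asymptotic count $A_\tau(s) \, l(\mi) \, T$ of type-$\tau$ pairs with gap $\leq s\ave{d_{T,\mi}}$ and $\gamma\alpha_i \in \mi$, where $A_\tau(s)$ is an explicit integral over $\Omega_\tau(s)$. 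Summing over types and normalizing by $\#\mathcal{A}_{T,\mi} \sim l(\mi)DT/(2\pi^2(h-2))$ from Theorem~\ref{equidistribution} gives
\[ F(s) = \frac{2\pi^2(h-2)}{D}\sum_\tau A_\tau(s), \]
manifestly independent of $\mi$, and continuous and piecewise smooth in $s$ since each region $\Omega_\tau(s)$ undergoes only a discrete set of topological transitions. Conformal invariance within the Ford family is immediate from the intrinsic, $\mathrm{PSL}(2,\RR)$-invariant definitions of every ingredient; the full $SL(2,\CC)$-invariance claimed in Theorem~\ref{gapdistribution} is deferred to \S\ref{transfer} via the transfer principle.

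The main technical obstacle is the explicit enumeration of the types $\tau$ and the description of the corresponding regions $\Omega_\tau(s)$: one must identify which descendants of the triangular gap to the right of $\gamma\alpha_i$ can realize the right-neighbor $x^+$ at each curvature scale, and verify that only finitely many reduced words contribute on any bounded range of $s$, so that the sum is finite and the areas $A_\tau(s)$ are genuinely piecewise smooth rather than merely continuous. This rests on the reflection-group description of $\PP$ (Lemma~\ref{g1}) and on the discreteness of $\Gamma$, but must be made quantitative enough to track the structural transitions. A routine secondary point is the uniform form of Theorem~\ref{anton} with $\nu$ restricted to a measurable subset of $[0,T]$, obtained by a standard subtraction argument with error uniform in the subset.
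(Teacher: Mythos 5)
Your overall architecture agrees with the paper's: reduce the gap statistic to counting $\gamma\in\Gamma$ with $(c_\gamma,d_\gamma)$ in a dilated region cut out by quadratic inequalities plus the interval condition, apply Good's theorem through a Riemann-sum/sector approximation (the paper does this via the Iwasawa-coordinate version, Theorem~\ref{nak} and Proposition~\ref{expanding}, rather than your subtraction in the $\nu$-variable of Theorem~\ref{anton}, but these are equivalent in substance), normalize by Theorem~\ref{equidistribution}, and defer full $SL(2,\CC)$-invariance to the transfer principle of \S~\ref{transfer}. However, what you label ``the main technical obstacle'' --- enumerating the types $\tau$ of adjacent pairs and proving that only finitely many reduced words contribute --- is not a routine verification to be deferred: it is the central geometric content of the theorem, and your proposal leaves it unproven. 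The paper's resolution is Proposition~\ref{12}: \emph{every} adjacent pair in $\mathcal{A}_T$ is of the form $\bigl(\gamma(C_i),\gamma(C_j)\bigr)$ for a \emph{single} element $\gamma\in\Gamma_0$ and a pair of circles $C_i,C_j\in\KK_0$ from the initial configuration, with $\gamma$ unique (up to composition with the reflection $S_{i,j}$ when $C_i,C_j$ are tangent). Thus the type set is exactly the finite set of unordered pairs from $\KK_0$, independent of $s$ and $T$; there is no reduced-word bookkeeping at all. The proof of this collapse rests on the geometric Lemma~\ref{comparecurv}: a circle inside a triangular gap bounded by $\RR$ and two tangent circles has strictly smaller radius than both bounding circles, so if two circles of $\PP_0$ are separated by a triangular gap disjoint from one of them, a larger circle separates them --- hence circles that are adjacent in $\mathcal{A}_T$ cannot have minimal words diverging at any letter, which forces the common group element. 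Without this argument (or an equivalent), your finiteness claim is unsubstantiated, the regions $\Omega_\tau(s)$ cannot be written down, and neither the explicit formula for $F$ nor its piecewise smoothness follows.

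Two further points you would need to handle even after filling that gap. First, the passage from $\Gamma_0$ to its orientation-preserving index-two subgroup $\Gamma$ introduces an asymmetry between tangent and disjoint pairs: for disjoint $C_i,C_j$ the $\Gamma$-orbit of $(C_i,C_j)$ captures only half of the $\Gamma_0$-orbit of such gaps (the other half coming from $(S C_i, S C_j)$), which is why the paper's formula \eqref{whatisf} carries a factor $2$ on the disjoint sum but not the tangent sum; your single sum $\sum_\tau A_\tau(s)$ glosses over this. Second, the conditions defining the regions are not only the two curvature bounds and the gap inequality: one must also impose that every circle lying \emph{between} $\gamma(C_i)$ and $\gamma(C_j)$ has curvature exceeding $T$, and identifying which finitely many circles must be checked (the $\gamma(C_k)$ or the $\gamma(S_{i,j}C_k)$, depending on the sign of $(c\alpha_i+d)(c\alpha_j+d)$) is the content of Proposition~\ref{omegaij} and Observation~\ref{inbetween}; this sign dichotomy is what produces the two symmetric components of each region in the examples of \S~\ref{sec:examples}.
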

The explicit formula for $F$ is given in Theorem~\ref{thm:formula for F}.

This section is devoted to proving Theorem \ref{gapdistribution} in the case of generalized Ford packings. 

\subsection{Geometric lemmas} 

Recall that we assume the initial configuration $\KK$ includes two parallel lines $\RR$ and $\RR+\ii$, two circles tangent to both of $\RR$ and $\RR+\ii$ with their tangencies on $\RR$ is 0 and $\frac{t}{2}$ respectively and possibly some circles in between. Our base circle $C_0$ is $\RR$, so $\KK_0=\KK\cap \mathcal{P}_0$ and $\mathcal{P}_0$ is the collection of all circles tangent to $\RR$.   

\begin{lem}\label{comparecurv}
i)  Let $K\in\PP_0$ lie in a triangular gap $\bt$ bounded by the real axis $\RR$ and two mutually tangent circles $K_1,K_2\neq \RR$. Then both $K_1$ and $K_2$ have radius greater than that of $K$.  

ii) If $C',C''\in \PP_0$ are such that $C'$ lies in a triangular gap $\bt$ bounded by the real axis, which is disjoint from $C''$, then there is another circle $C\in \PP_0$ which separates  $C'$ and $C''$ and has smaller curvature (i.e. bigger radius) than that of $C'$. 
\end{lem}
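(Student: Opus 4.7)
The plan for part (i) is a direct Euclidean computation. Parameterizing each of $K, K_1, K_2$ as a circle tangent to $\RR$ from above, I write the centers as $(x, r_K)$ and $(a_j, r_{K_j})$ respectively. The mutual tangency of $K_1$ and $K_2$ translates to $(a_1-a_2)^2 = 4r_{K_1}r_{K_2}$, while the fact that $K \subset \overline{\bt}$ forces both that $x$ lies between $a_1$ and $a_2$ and that $K$ is externally disjoint from each $K_j$, yielding $|x-a_j| \geq 2\sqrt{r_K r_{K_j}}$. Adding these two inequalities and comparing with the tangency equation then gives
\[
\sqrt{r_{K_1}r_{K_2}} \;\geq\; \sqrt{r_K}\bigl(\sqrt{r_{K_1}}+\sqrt{r_{K_2}}\bigr),
\]
which rearranges to
\[
r_K \;\leq\; \frac{r_{K_1}r_{K_2}}{\bigl(\sqrt{r_{K_1}}+\sqrt{r_{K_2}}\bigr)^2},
\]
a quantity strictly smaller than both $r_{K_1}$ and $r_{K_2}$.

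For part (ii), my plan is to locate the tangency $\alpha''$ of $C''$ on $\RR$ relative to the two base tangencies $\beta_1<\beta_2$ of the bounding circles $K_1,K_2$ of $\bt$. The key geometric claim is that $\alpha'' \notin (\beta_1,\beta_2)$: otherwise a short arc of $C''$ immediately above $\RR$ near $\alpha''$ would sit inside the open gap $\bt$, contradicting $\bt \cap C'' = \emptyset$; and the endpoint cases $\alpha''=\beta_j$ are ruled out because two distinct circles of $\PP_0$ cannot share a tangency with $\RR$ without overlapping interiors. After possibly relabeling, $\alpha''>\beta_2$; then $C:=K_2 \in \PP_0$ has its tangency $\beta_2$ strictly between $\alpha(C')\in(\beta_1,\beta_2)$ and $\alpha''$, so $C$ separates $C'$ and $C''$ in the sense of tangencies on $\RR$. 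Applying part (i) to $C'\subset\bt$ then gives $r(K_2)>r(C')$, so $K_2$ has smaller curvature than $C'$, as required.

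The main obstacle is the positional claim in part (ii) that $\alpha''\notin[\beta_1,\beta_2]$. The inequality computation in part (i) is routine once one sets up Euclidean coordinates for circles tangent to $\RR$, but the argument in (ii) hinges on carefully using that circles in $\PP$ have pairwise disjoint interiors and that the open gap $\bt$ meets $\RR$ in exactly the open interval $(\beta_1,\beta_2)$; once these are nailed down the lemma follows at once.
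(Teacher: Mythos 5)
Your proposal is correct, and for part (i) it takes a genuinely different route from the paper. The paper argues by deformation and monotonicity: it first moves $K$ inside the gap until it is tangent to $\RR$ and to the larger bounding circle, say $K_1$, then parametrizes circles tangent to $\RR$ and $K_1$ by an angle $\phi$ at the center of $K_1$, computes the radius of such a circle to be $(\tan\frac{\phi}{2})^2 r_{K_1}$, and concludes from monotonicity in $\phi$ that $r(K)<r(K_2)\leq r(K_1)$. You replace this with a purely algebraic argument on tangency points: the identity $|a_1-a_2|=2\sqrt{r_{K_1}r_{K_2}}$ for tangent circles, the inequality $|x-a_j|\geq 2\sqrt{r_K r_{K_j}}$ for disjoint interiors, and betweenness of $x$. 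This avoids the paper's somewhat glossed-over ``moving'' step and yields the sharp bound $r_K\leq r_{K_1}r_{K_2}\bigl(\sqrt{r_{K_1}}+\sqrt{r_{K_2}}\bigr)^{-2}$ (the radius of the inscribed Soddy circle), from which strict inequality against both radii is immediate; it is arguably cleaner and more quantitative than the paper's argument. For part (ii) your route coincides structurally with the paper's --- a bounding circle of $\bt$ separates $C'$ from $C''$, then part (i) applies --- but the paper merely asserts the separation, whereas you actually prove it by locating $\alpha''$ outside $[\beta_1,\beta_2]$, including the correct observation that two distinct circles of $\PP_0$ cannot be tangent to $\RR$ at the same point.

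Two caveats, both of which apply equally to the paper's own proof. First, in the generalized Ford configurations of this paper a triangular gap resting on $\RR$ may have the line $\RR+\ii$ as one of its two non-real sides (the initial configuration has two such gaps, and they do occur in the application in Proposition~\ref{12}). In that degenerate case your tangency identity and the finite interval $(\beta_1,\beta_2)$ both break down: the gap meets $\RR$ in a half-line, and strict inequality in (i) can fail --- the circle $C(-1+\frac{\ii}{2},\frac{1}{2})$ lies in the gap bounded by $\RR$, $\RR+\ii$ and $C(\frac{\ii}{2},\frac{1}{2})$ and has the same radius as the bounding circle. The case is easy to patch (every circle in the strip tangent to $\RR$ has radius at most $\frac{1}{2}$, and the half-line separation argument is the same), and the resulting non-strict inequality still suffices for Proposition~\ref{12}, but it should be flagged. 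Second, you assert $K_2\in\PP_0$ without comment; this holds because the gaps arising here are images under $\mathfrak{S}$ of gaps of the initial configuration, so their bounding circles belong to the packing, but since the conclusion of (ii) requires $C\in\PP_0$ this deserves a sentence.
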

\begin{proof}
To see (i), we may move the circle $K$  so that it is tangent to both the real line and the larger of the two initial circles $K_1$ and $K_2$, say it is $K_1$. We get a configuration as in Figure~\ref{fig geom arg}. 
\begin{figure}[h]
\begin{center}
\includegraphics[width=8cm]
{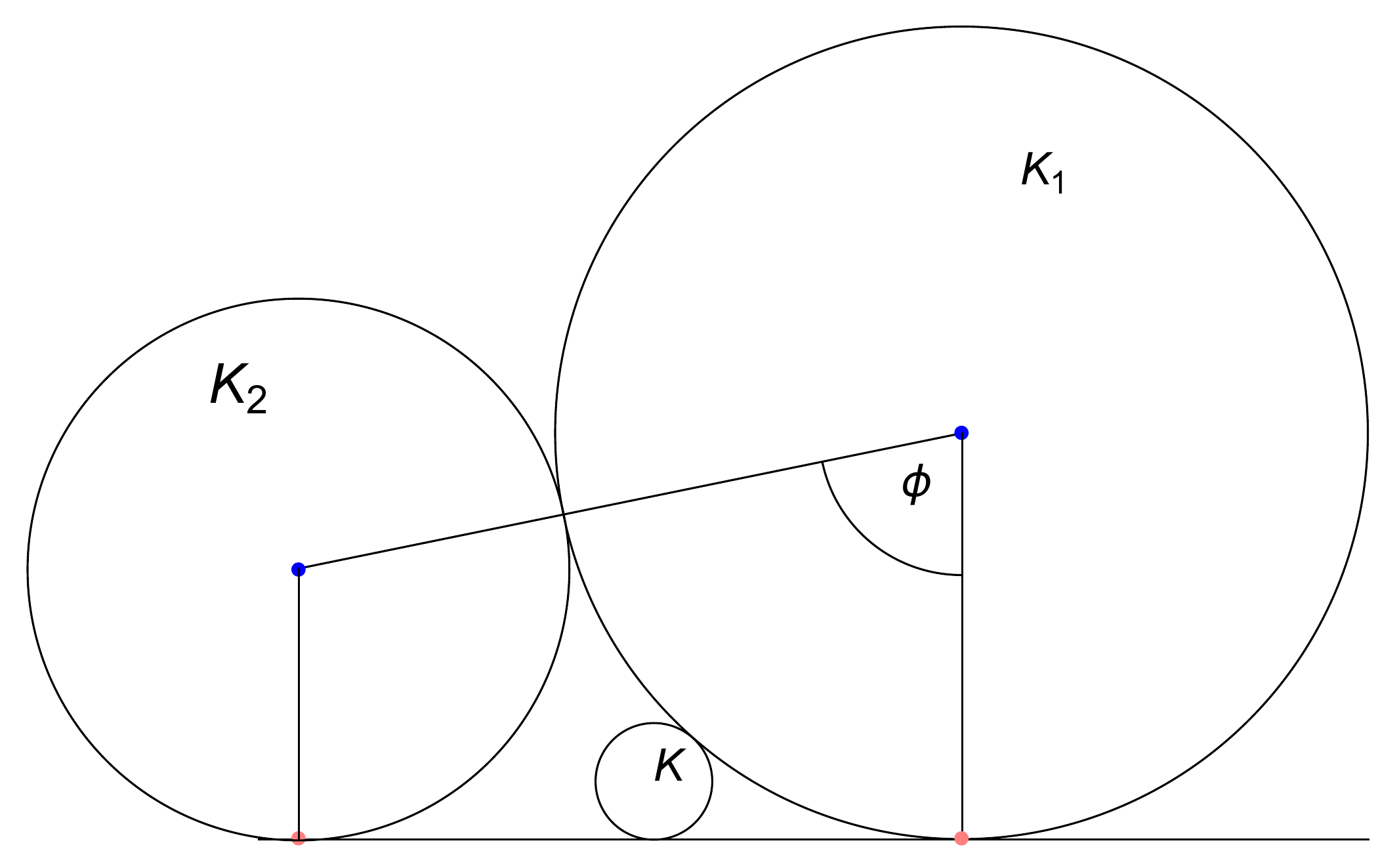}
 \caption{ The radius of any circle contained in a triangular gap formed by two mutually tangent circles, both of which are tangent to the real line, is smaller than the radii of these two circles.}
 \label{fig geom arg}
\end{center}
\end{figure}

If we denote by $r_1$ the radius of the larger circle $K_1$, and by $\phi= \phi(K_1,K_2)$ the angle between the segment joining the centers of the two circles $K_1$, $K_2$, and the segment joining the point of tangency of the circle  $K_1$ with  real line and with the center of $K_1$, then a computation shows that the radius of the third mutually tangent circle $K_2$ is $r(\phi) = (\tan\frac \phi 2)^2r_1$. This is an increasing function of $\phi$. Since the angle $\phi(K_1,K)$ is smaller than $\phi(K_1,K_2)$ for any circle $K$ contained in the triangular gap and tangent to both $K_1$ and the real line, it follows that the circle $K$ has smaller radius than that of $K_2$ as claimed.

\begin{figure}[h]
\begin{center}
\includegraphics[width=8cm]
{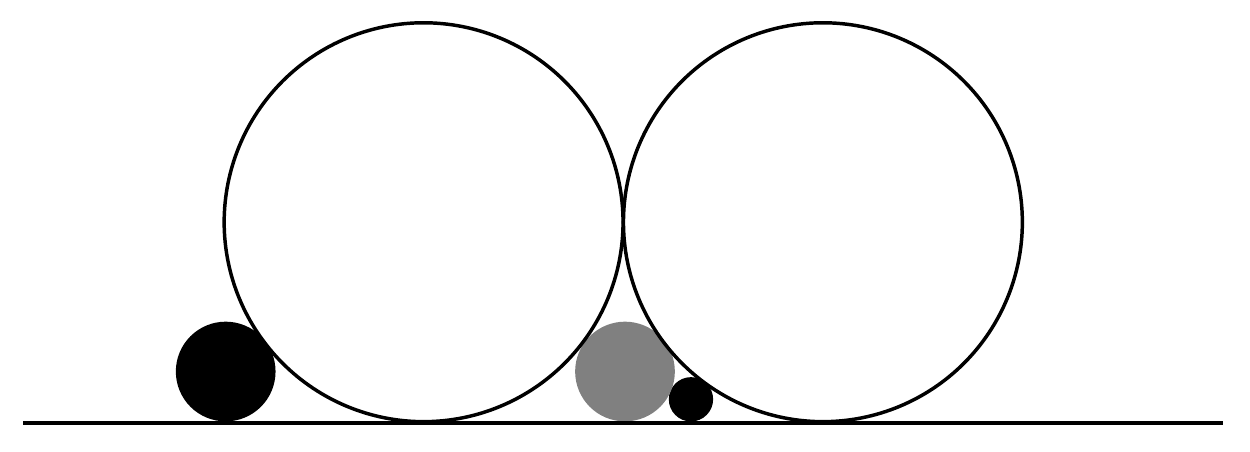}
 \caption{ If $C',C''\in \PP_0$ (shaded black) are such that $C'$ lies in a triangular gap   $\bt$ bounded by the real axis,  with $\bt$ disjoint from $C''$, then there is another circle $C\in \PP_0$ (shaded gray), which is one of the bounding circles of $\bt$, which separates  $C'$ and $C''$ and has smaller curvature  than that of $C'$.}
 \label{geom arg separate}
\end{center}
\end{figure}
 For (ii), note if $C''$ is disjoint from the triangular gap $\bt$ containing $C'$, then  one of the bounding circles $C$ of $\bt$ separates $C'$ and $C''$. By part (i), $C$ has bigger radius than $C'$, see Figure~\ref{geom arg separate}. 
\end{proof}

The following Proposition is a crucial ingredient in understanding the gap distribution:
\begin{prop}\label{12}
For any adjacent pair $\alpha_1^{'},\alpha_2^{'} \in \mathcal{A}_T$ with
corresponding circles $C_1'$ and $C_2'$, there is an element 
$\gamma\in\Gamma_0$ and $C_i,C_j\in \KK_0$ such that
$\gamma(C_i)=C_1'$ and $\gamma(C_j)=C_2'$. 

The element $\gamma\in \Gamma_0$ is unique if $C_1'$, $C_2'$ are disjoint, and if they are tangent then the only other element with this property is $\gamma'=\gamma S_{i,j}$, where  $S_{i,j}\in \mathcal S$ is the reflection in the dual circle corresponding to the triangle formed by $C_i,C_j$ and $\RR$.  
\end{prop}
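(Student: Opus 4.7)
My plan is to prove existence by a case analysis on the position of $C_2'$ relative to the minimal triangle $\bt_{C_2'}$ (and, if needed, $\bt_{C_1'}$), using Lemma~\ref{comparecurv} to rule out configurations that would contradict adjacency; and then to establish uniqueness by computing the intersection of two cusp stabilizers inside $\Gamma_0$. I expect the main obstacle to be a ``tree nesting'' claim about the triangles $\{\bt_w\}$ that is needed in one sub-case of the existence argument.

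For existence, the case $C_1',C_2'\in\KK_0$ is immediate with $\gamma=\id$. Otherwise I assume WLOG $C_2'\notin\KK_0$, with minimal word $C_2'=T_1'\cdots T_n'C_j$, $\gamma_2:=T_1'\cdots T_n'$, $T_n'=S_l$, and minimal triangle $\bt_{C_2'}=T_1'\cdots T_{n-1}'\bt_{S_l}$. Apply Lemma~\ref{comparecurv}(ii) with $(C',C'',\bt)=(C_2',C_1',\bt_{C_2'})$: if $\bt_{C_2'}$ were disjoint from $C_1'$, then one of the two bounding circles of $\bt_{C_2'}$ other than $\RR$ (each having, by Lemma~\ref{comparecurv}(i), curvature strictly less than $\kappa(C_2')\le T$, hence in $\mathcal{A}_T$) would have its tangency on $\RR$ strictly between $\alpha_1'$ and $\alpha_2'$, contradicting adjacency. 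Thus either (a) $C_1'$ equals one of the two bounding circles of $\bt_{C_2'}$, or (b) $C_1'$ lies strictly inside $\bt_{C_2'}$. In (a), since $S_l$ fixes $C_l,C_{l+1}$, we get $C_1'=\gamma_2 C_l$ or $\gamma_2 C_{l+1}$, so $\gamma:=\gamma_2$ works. In (b), no circle of $\KK_0$ lies strictly inside a packing triangle, so $C_1'\notin\KK_0$ and $\bt_{C_1'}$ is defined; applying the same dichotomy symmetrically to $\bt_{C_1'}$, either $C_2'$ bounds $\bt_{C_1'}$ (and $\gamma:=\gamma_1$ works analogously) or $C_2'\subset\bt_{C_1'}$. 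In the remaining doubly-nested sub-case $C_1'\subset\bt_{C_2'}$ and $C_2'\subset\bt_{C_1'}$, the tree nesting of the $\bt_w$'s (indexed by reduced words via Lemma~\ref{g1}) combined with the minimality of $\bt_C$ among packing triangles containing $C$ forces $\bt_{C_1'}=\bt_{C_2'}$, hence $\gamma_1=\gamma_2$ and $\gamma:=\gamma_1$ works.

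For uniqueness, if $\gamma,\gamma'\in\Gamma_0$ both witness the conclusion, producing pairs $(C_i,C_j)$ and $(C_{i'},C_{j'})$, then the orbit-counting lemma preceding Proposition~\ref{propertyofgamma} ($\Gamma_0$ has $h=|\KK_0|$ orbits on $\PP_0$) forces $i=i'$ and $j=j'$. Thus $\gamma^{-1}\gamma'\in\mathrm{Stab}_{\Gamma_0}(C_i)\cap\mathrm{Stab}_{\Gamma_0}(C_j)=\langle S_{i-1},S_i\rangle\cap\langle S_{j-1},S_j\rangle$, each factor being the infinite-dihedral parabolic cusp group. A short computation in these dihedral groups shows the intersection is $\{1,S_{i,j}\}$ when $|i-j|\equiv 1\pmod h$ (equivalently, $C_1',C_2'$ are tangent) and $\{1\}$ otherwise, matching the statement.

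The main obstacle is the tree-nesting claim invoked in the doubly-nested sub-case of existence. To close it I would prove, using Lemma~\ref{g1}, that for any $C\in\PP$ and any triangle $\bt_w$ of $\PP$ containing $C$, the word $w$ must be an initial segment of the minimal word of $C$; the nesting of packing triangles containing $C$ then reduces to the obvious nesting of initial segments, whose smallest member is $\bt_C$. A secondary technicality is the stabilizer computation in the non-tangent case: this rests on the fact that the only reflections in $\Gamma_0$ simultaneously fixing two distinct cusps $\alpha_i,\alpha_j$ are the generators $S_l$ associated with adjacent cusp pairs, so no such element exists when $|i-j|>1$.
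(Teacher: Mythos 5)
Your proof is sound and reaches the statement by a genuinely different organization than the paper's. The paper proves existence by comparing the two minimal words letter by letter: writing $C_1'=T_1\cdots T_m C_i$ and $C_2'=T_1'\cdots T_n'C_j$ with $n\geq m$, it first shows $T_s=T_s'$ for all $s\leq m$, then shows that the leftover letters $T'_{m+1},\dots,T'_n$ all fix $C_i$, in both steps deriving a contradiction with adjacency from Lemma~\ref{comparecurv}(ii); the witness is then $\gamma=T_1'\cdots T_n'$. Your trichotomy (disjoint/bounding/nested) runs on the same fuel --- Lemma~\ref{g1}, ping-pong disjointness of the gaps, and Lemma~\ref{comparecurv}(ii) played against adjacency --- but packages it geometrically, at the cost of the extra tree-nesting lemma needed in the doubly-nested sub-case; the paper's word-by-word comparison never meets that case separately (when $n=m$ the equality of the group parts falls out at once). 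Your nesting lemma is true and provable exactly as you indicate (induct on the length of $w$, using Lemma~\ref{g1} and the disjointness of the gaps $\bt_S$), so your route is an honest alternative, just slightly longer. On uniqueness you are in fact more complete than the paper, which merely asserts that the simultaneous stabilizer of $C_i$ and $C_j$ in $\Gamma_0$ is trivial, resp.\ $\{\id,S_{i,j}\}$; you derive this from the free-product structure $\Gamma_0\cong \ZZ/2 * \cdots * \ZZ/2$ (normal forms give the intersection of $\langle S_{i-1},S_i\rangle$ and $\langle S_{j-1},S_j\rangle$), and you also justify, via the orbit count, that the pair $(C_i,C_j)$ is itself determined --- a point the paper silently skips.

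One caveat: your parenthetical identification of tangency of $C_i,C_j$ with cyclic adjacency $|i-j|\equiv 1 \pmod h$ is not automatic. A triangulation may have a chord in the link of the vertex $C_0$ (e.g.\ the triangular bipyramid with $C_0$ an equatorial vertex: the other two equatorial circles are tangent to each other and to $C_0$, yet are separated in the cyclic order around $C_0$ by the two apex circles). For such a pair your stabilizer computation still gives the truth --- the intersection is trivial, so $\gamma$ is unique --- but then the tangent case of the proposition, which posits a second witness $\gamma S_{i,j}$ with $S_{i,j}\in\mathcal S$, breaks down, because the triangle formed by $C_i$, $C_j$ and $\RR$ is not a gap of $\KK$ and carries no reflection in $\mathcal S$. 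This is a blind spot of the statement itself (and of the paper's proof, which asserts the same dichotomy) rather than an error on your side; it does not arise in the paper's examples, where the link of $C_0$ is an induced cycle.
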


\begin{proof}
Let $C_{1}^{'}=T_1\hdots T_mC_i$ be the minimal word for $C_1^{'}$, so that the triangle associated to $C_1'$ is $\bt_{C_1^{'}}=T_1\hdots T_{m-1}\bt_{T_m}$.  Let $C_{2}^{'}=T_1^{'}\hdots T_n^{'}C_j$ be the minimal word for $C_2^{'}$ and $\bt_{C_2^{'}}=T_1^{'}\hdots T_{n-1}^{'}\bt_{T_n^{'}}$.  Let's suppose $n\geq m$.  Suppose there exists an integer $k\in[1,m]$ such that $T_{k}\neq T_{k}^{'}$.  We take $k$ to be minimal,  then $T_k\hdots T_mC_i\subset \bt_{T_k}$ and  $T_k^{'}\hdots T_n^{'}C_j\subset \bt_{T_k^{'}}$, and  $\bt_{T_k}$ and  $\bt_{T_k^{'}}$ are two disjoint sets.  Since $T_s=T_s^{'}$ when $1\leq s\leq k-1$, we see that $T_1\hdots T_{k-1}$ maps $\bt_{T_k}$ and $\bt_{T_k^{'}}$ to two disjoint triangles lying in $\bt_{T_1}$.  These two triangles contain $C_1^{'}$ and $C_2^{'}$ respectively.  We have a contradiction here 
because by Lemma~\ref{comparecurv}(ii), there is another circle in $\PP_0$ separating $C_1^{'}$ and $C_2^{'}$ which has smaller curvature,   so $C_{1}^{'}$ and $C_{2}^{'}$ are not neighbors.  Therefore $T_s=T_s^{'}$ when $s\in[1,m]$, and 
$$ C_1'=T_1\dots T_mC_i, \quad C_2'=T_1\dots T_m T'_{m+1}\dots T_n' C_j$$

Now we claim that $T_{s}^{'}$ fixes $C_i$ when $s=m+1,\dots,n$.  This will show that 
$$  C_1'=T_1\dots T_m T_{m+1}'\dots T_n'C_i, \quad C_2'=T_1\dots T_m T'_{m+1}\dots T_n' C_j$$
so that we may take    $\gamma:=T_1^{'}\hdots T_n^{'}$, which satisfies $C_1'=\gamma C_i$ and $C_2'=\gamma C_j$ as required.  

Otherwise, let $k\in [m+1,n]$ again be the smallest integer such that $T_{k}^{'}C_i\neq C_i$, then $\bt_{T_k^{'}}$ contains $T_{k+1}^{'}\hdots T_n^{'}C_j$, and $\bt_{T_k^{'}}$ and $C_i$ are disjoint.  As a result $T_1\hdots T_{k-1}=T_1^{'}\hdots T_{k-1}^{'}$ maps $C_i$ and $\bt_{T_k^{'}}$ to two disjoint sets: $C_{1}^{'}$ and the triangle $\bt=T_1^{'}\hdots T_{k-1}^{'}\bt_{T_k}$, the latter of which contains $C_2^{'}$.  Therefore, by Lemma~\ref{comparecurv}(ii), 
between $C_1^{'}$ and $C_2^{'}$ there must be a circle whose curvature is smaller than that of $C_2^{'}$, so that $C_1'$ and $C_2'$ cannot be adjacent in $\mathcal A_T$, contradiction.  Hence $T_{s}^{'}C_i=C_i$ for $s\in[m+1,n]$ and we have constructed $\gamma:=T_1^{'}\hdots T_n^{'}$. 

To address uniqueness of $\gamma$,  assume $\gamma^{'}$ also sends $C_i$ to $C_1^{'}$, $C_j$ to $C_2^{'}$, then $\gamma^{-1}\gamma^{'}$ fixes $C_i$ and $C_j$ simultaneously.  Therefore, if $C_i$ and $C_j$ are disjoint, $\gamma^{-1}\gamma^{'}=id$; if $C_i$ and $C_j$ are tangent, $\gamma^{-1}\gamma^{'}=id\text{ or }S_{i,j}$, where $S_{i,j}\in \mathcal{S}$ is the reflection corresponding to the triangle formed by $C_i$, $C_j$ and $\RR$. 
\end{proof}

\subsection{Determining which cosets of $\Gamma$ define neighbours in $\mathcal {A}_T$}\label{sec:determining}

We now use Proposition~\ref{12} to relate the gap distribution problem to a question of counting certain cosets in $\Gamma$. 

For each pair of circles $C_i,C_j\in \mathcal K_0$ in the initial configuration which are tangent to $C_0$, we will define regions $\Omega_T^{i,j}$ in the right $cd$-plane $\{(c,d):c\geq 0\}$ as follows.  

a) If $\alpha_i,\alpha_j\neq \infty$, we require 
\begin{equation}\label{bounding}
\frac{(c\alpha_i+d)^2}{r_i}\leq T, \quad \frac{(c\alpha_j+d)^2}{r_j}\leq T
\end{equation} 
 and in addition: 

i) If $C_i$, $C_j$ are tangent, then $(c,d)$ satisfies 
\begin{equation}\label{bounding5}
\mbox{ if }\;(c\alpha_i+d)(c\alpha_j+d)<0 \mbox{ then } 2c^2>T \mbox{ and } 
\frac{(c\alpha_k+d)^2}{r_k}>T,\quad \forall k\neq i,j,\infty
\end{equation}
and 
\begin{equation}\label{bounding2}
\mbox{ if } \; (c\alpha_i+d)(c\alpha_j+d)>0  \mbox{ then }\;\frac{(c\alpha_{i,j}^{(k)}+d)^2}{r_{i,j}^{(k)}}>T\; \quad \forall k\neq i,j\;.
\end{equation} 
where  $S_{i,j}C_k = C(\alpha_{i,j}^{(k)} + \ii r_{i,j}^{(k)}, r_{i,j}^{(k)})$, with  $S_{i,j}$ is the reflection corresponding to the
triangle formed by $C_i,C_j$ and $\RR$.

ii) If $C_i$ and $C_j$ are disjoint (non-tangent), then  $(c,d)$ satisfies 
\begin{multline}\label{bounding6}
\frac{(c\alpha_k+d)^2}{r_k}>T \mbox{ (or }2c^2 >T \mbox{ if }\alpha_k=\infty), \mbox{ if either }  \\
\begin{cases} \alpha_k \mbox{ lies between } \alpha_i, \alpha_j \mbox{ and } (c\alpha_i+d)(c\alpha_j+d)>0\\ \mbox{ or} \\
\alpha_k \mbox{ does not lie between } \alpha_i, \alpha_j \mbox{ and } (c\alpha_i+d)(c\alpha_j+d)<0. 
\end{cases}
\end{multline}

b) If one of $\alpha_i,\alpha_j$ is $\infty$, say $\alpha_j=\infty$, we require 
\begin{equation}\label{boundinginfty}
\frac{(c\alpha_i+d)^2}{r_i}\leq T, \quad 2c^2 \leq T
\end{equation} 
 and in addition: 

i) If $C_i$, $C_j$ are tangent, then $(c,d)$ satisfies 
\begin{equation}\label{bounding5infty}
\mbox{ if }\;\alpha_i=0,d>0 \mbox{ or }\alpha_i=\frac{t}{2}, \frac{ct}{2}+d<0,  \mbox{ then } 
\frac{(c\alpha_k+d)^2}{r_k}>T,\quad \forall k\neq i,j
\end{equation}
and 
\begin{equation}\label{bounding2infty}
\mbox{ if }\;\alpha_i=0,d<0 \mbox{ or }\alpha_i=\frac{t}{2}, \frac{ct}{2}+d>0,  \mbox{ then }\;\frac{(c\alpha_{i,j}^{(k)}+d)^2}{r_{i,j}^{(k)}}>T\; \quad \forall k\neq i,j\;.
\end{equation} 

ii) If $C_i$ and $C_j$ are disjoint (non-tangent), then  $(c,d)$ satisfies 
\begin{equation}\label{bounding6infty}
\frac{(c\alpha_k+d)^2}{r_k}>T \mbox{ if either }   \quad 
\begin{cases} \alpha_k>\alpha_i \mbox{ and } c\alpha_i+d>0\\ \mbox{ or} \\
\alpha_k <\alpha_i \mbox{ and } c\alpha_i+d<0
\end{cases}
\end{equation}

Note that  $\Omega_{T}^{i,j}$ is a finite union of convex sets and $$\Omega_{T}^{i,j}=\sqrt{T}\Omega_{1}^{i,j}\;.$$
 

The result is: 
\begin{prop}\label{omegaij} 
For any $C_i,C_j\in\mathcal{K}_0$ and $\gamma = \begin{pmatrix}a_\gamma&b_\gamma\\ c_\gamma & d_\gamma \end{pmatrix}\in\Gamma$, the circles $\gamma(C_i),\gamma(C_i)$ are neighbors in $\mathcal{A}_{T}$ if and only if $\langle c_{\gamma},d_{\gamma}\rangle$ lies in some $\Omega_{T}^{i,j}$ in the right half plane, defined by  the conditions 

i) If $\alpha_i,\alpha_j\neq \infty$ then  we require \eqref{bounding}, with \eqref{bounding5}  or \eqref{bounding2} in the case that $C_i$, $C_j$ are tangent, and if they are disjoint then we require \eqref{bounding6}. 

ii) If $\alpha_j=\infty$, then  we require \eqref{boundinginfty}, with \eqref{bounding5infty}  or \eqref{bounding2infty} in the case that $C_i$, $C_j$ are tangent, and if they are disjoint then we require \eqref{bounding6infty}.

\end{prop}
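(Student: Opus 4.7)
The plan is to combine Proposition~\ref{12} with the curvature formula of Lemma~\ref{curvatureformula} in order to translate the geometric condition ``$\gamma C_i$ and $\gamma C_j$ are neighbours in $\mathcal{A}_T$'' into the algebraic inequalities defining $\Omega_T^{i,j}$. By definition, $\gamma C_i$ and $\gamma C_j$ are neighbours in $\mathcal{A}_T$ precisely when (a) both have curvature $\leq T$, and (b) no circle in $\PP_0$ with curvature $\leq T$ has its tangency point lying strictly between $\gamma\alpha_i$ and $\gamma\alpha_j$ on $\RR$. Lemma~\ref{curvatureformula} converts condition (a) immediately into the quadratic inequalities \eqref{bounding} (or \eqref{boundinginfty} when one of $\alpha_i,\alpha_j$ is $\infty$, using that the line $\RR+\ii y$ is mapped to a circle of curvature $2c_\gamma^2 y$).

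The substance of the proof lies in condition (b). By Lemma~\ref{comparecurv}(i)--(ii), every circle $C'\in\PP_0$ whose tangency lies strictly between $\gamma\alpha_i$ and $\gamma\alpha_j$ is contained in a triangular gap whose two bounding circles in $\PP_0$ have strictly smaller curvature; iterating, it suffices to bound the curvatures of the ``first-layer'' separators only. The first layer splits by cases: if $C_i,C_j$ are tangent in $\KK_0$, the separators are either the images $\gamma S_{i,j}C_k$ lying inside $\gamma\bt_{i,j}$ (for $k\neq i,j$), or the images $\gamma C_k$, including $\gamma C_\infty=\gamma(\RR+\ii)$, lying outside $\gamma\bt_{i,j}$; if $C_i,C_j$ are disjoint, the first layer consists simply of those $\gamma C_k$ whose tangency $\gamma\alpha_k$ falls between $\gamma\alpha_i$ and $\gamma\alpha_j$. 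A second application of Lemma~\ref{curvatureformula} turns each candidate bound $\kappa(C')>T$ into an inequality of the shape $(c_\gamma\alpha_\ast+d_\gamma)^2/r_\ast>T$ (or $2c_\gamma^2>T$ when $C'$ is the image of a horizontal line).

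The remaining geometric point is to decide which first-layer circles actually lie between $\gamma\alpha_i$ and $\gamma\alpha_j$. Since $\gamma|_\RR$ is monotone on each side of its pole $-d_\gamma/c_\gamma$, the short arc $(\alpha_i,\alpha_j)\subset\RR$ is sent to the short arc between $\gamma\alpha_i$ and $\gamma\alpha_j$ precisely when $-d_\gamma/c_\gamma\notin(\alpha_i,\alpha_j)$, equivalently when $(c_\gamma\alpha_i+d_\gamma)(c_\gamma\alpha_j+d_\gamma)>0$; the opposite sign means that the complementary arc of $\RR$ is what maps between $\gamma\alpha_i$ and $\gamma\alpha_j$. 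In the tangent case this sign dichotomy distinguishes whether the first-layer separators are the $\gamma S_{i,j}C_k$ (giving \eqref{bounding2}) or the $\gamma C_k$ together with $\gamma C_\infty$ (giving \eqref{bounding5}); in the disjoint case, combined with the position of each $\alpha_k$ relative to $\alpha_i,\alpha_j$, it yields \eqref{bounding6}. The $\infty$-variants \eqref{bounding5infty}, \eqref{bounding2infty}, \eqref{bounding6infty} follow by the same analysis, with the sign of $c_\gamma\alpha_i+d_\gamma$ (and of $c_\gamma$) playing the role of the earlier sign product. I expect the main obstacle to be the meticulous case analysis when $\alpha_j=\infty$, where ``between $\gamma\alpha_i$ and $\gamma\alpha_j$'' must be rephrased in terms of whether $\alpha_k$ is larger or smaller than $\alpha_i$; once the sign dictionary is fixed, every listed inequality is a direct transcription of a curvature bound via Lemma~\ref{curvatureformula}.
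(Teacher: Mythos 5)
Your proposal is correct and follows essentially the same route as the paper: reduce via Proposition~\ref{12}, translate curvature bounds through Lemma~\ref{curvatureformula}, use Lemma~\ref{comparecurv} to cut the infinitely many ``in between'' circles down to a finite first layer, and sort the cases (tangent vs.\ disjoint, $\alpha_j=\infty$ or not) by a sign dictionary. Your pole/monotonicity argument for deciding which arc maps between $\gamma\alpha_i$ and $\gamma\alpha_j$ is just a rephrasing of the paper's Observation~\ref{inbetween}, which obtains the same sign criterion by a direct computation with the identity $\gamma x-\gamma y=\frac{x-y}{(cx+d)(cy+d)}$.
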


\begin{proof}

Let $S\in\mathcal{S}$ be the reflection in the line ${\rm Re}(z)=t/2$, corresponding to the triangular gap bounded by the lines $\RR$, $\RR+\ii$ and the circle with tangency $\frac{t}{2}$. 
From Proposition~\ref{12}, passing to the orientation preserving subgroup $\Gamma$, every (unordered) pair of neighbor circles can be expressed as $\left(\gamma(C_i),\gamma(C_j)\right)$ or $\left(\gamma S(C_i),\gamma
S(C_j)\right)$ for some $\gamma\in\Gamma$ and $C_i,C_j\in\mathcal{K}_0$ with $i\neq j$.  The expression is unique when $C_i$ and $C_j$ are disjoint because the simultaneous stabilizer for $C_i$
and $C_j$ is trivial.  However, we only need to calculate the contribution to $F$ from $(C_i,
C_j)$ under $\Gamma$: the contribution from $(S(C_i),S(C_j))$ is the same as that from $(C_i,C_j)$
because they are mirror symmetric under the reflection of the line {$\Re
z=\frac{t}{2}$.}
Or more formally,  $\Gamma(S(C_i),S(C_j))=S\Gamma S(S(C_i),S(C_j))=S\Gamma(C_i,C_j)$ and $S$ is metric preserving on $\RR$.  

When $C_i$ and $C_j$ are tangent, then
$(S(C_i),S(C_j))$ is also in the orbit of $(C_i,C_j)$ under $\Gamma$.  In this case, every pair of circles from $\Gamma_0((C_i,C_j))$ can be uniquely expressed as $\gamma(C_i,C_j)$ for some $\gamma\in\Gamma$.

If two neighbouring circles in $\mathcal{A}_T$ can be expressed as
$\gamma(C_i),\gamma(C_j)$, then we   need their curvatures to satisfy 
\begin{equation*}
\kappa(\gamma(C_i))\leq T, \quad \kappa(\gamma(C_j))\leq T\;.
\end{equation*} 
By Lemma~\ref{curvatureformula} we have $\kappa(\gamma C_j) = (c\alpha_j+d)^2/r_j$ and so we get \eqref{bounding}.

All the circles in between $\gamma(C_i)$ and $\gamma(C_j)$ must
have curvatures greater than $T$.  We  see from Lemma \ref{comparecurv} that it suffices to check finitely many circles.  
A useful fact here is the following: 
\begin{obs}\label{inbetween}
If neither $C_i$ nor $C_j$ is the horizontal line from $\mathcal{K}_0$, which means $\alpha_i,\alpha_j\neq \infty$, then for another circle $C_l\in \mathcal{K}_0$, $\gamma(C_l)$ is in between $\gamma(C_i)$ and $\gamma(C_j)$ if and only if 
$$ 
 {\rm Sign} (c_{\gamma}\alpha_i+d_{\gamma})(c_{\gamma}\alpha_j+d_{\gamma}) =-{\rm Sign} (\alpha_l-\alpha_i)(\alpha_l-\alpha_j).
 $$

If $C_j$ is the horizontal line, so that $\alpha_j=\infty$, then $\gamma(C_l)$ is in between $\gamma(C_i)$ and $\gamma(C_j)$ if and only if 
$$ 
  {\rm Sign}(c(c\alpha_i+d))= {\rm Sign}(\alpha_l-\alpha_i)
$$
\end{obs} 
\begin{proof}
The observation is that given real numbers $a$ and $b$, a third real $c$ lies between them if and only if 
$${\rm  Sign}(c-a)(c-b) = -1$$
Now take the numbers to be $a=\gamma \alpha_i$, $b\gamma\alpha_i$ and $c=\gamma \alpha_l$ and compute
$$ 
{\rm Sign} (\gamma\alpha_l-\gamma\alpha_i)\cdot(\gamma\alpha_l-\gamma\alpha_j) ={\rm Sign}\frac{(\alpha_l-\alpha_i)(\alpha_l-\alpha_j)}{(c_{\gamma}\alpha_i+d_{\gamma})(c_{\gamma}\alpha_j+d_{\gamma})(c_{\gamma}\alpha_l+d)^2}=-1$$ 
from which the claim follows.

In the case $\alpha_j=\infty$, we have
$$ {\rm Sign} (\gamma \alpha_l-\gamma\alpha_i)(\gamma \alpha_l-\gamma\infty) = -{\rm Sign}\frac{\alpha_l-\alpha_i}{c(c\alpha_i+d)}=-1$$
which gives the claim. 
\end{proof}

There are two cases: \\

Case 1: $C_i$ and $C_j$ are tangent.  
Then $\alpha_k$ does not lie between $\alpha_i$ and $\alpha_j$, for all $k$. 

If $(c\alpha_i+d)(c\alpha_j+d)<0$, then the circles $C_k$ for
$k\neq i,j$ are mapped by $\gamma$ to some circles in between
$\gamma(C_i)$ and $\gamma(C_j)$, and in this case we need
\begin{equation*}
\mbox{ if }\;(c\alpha_i+d)(c\alpha_j+d)<0 \mbox{ then } \kappa(\gamma(C_k))>T,\quad \forall k\neq i,j
\end{equation*}
and using Lemma~\ref{curvatureformula} which gives $\kappa(\gamma(C_k)) = (c\alpha_k+d)^2/r_k$ we obtain condition \eqref{bounding5}.

 If $(c\alpha_i+d)(c\alpha_j+d)>0$, then from Observation \ref{inbetween} none of the circles $\gamma C_k$ lie between $\gamma C_i$ and $\gamma C_j$, but 
the circles $S_{i,j}C_k$ for
$k\neq i,j$ are mapped by $\gamma$ to some circles in between
$\gamma(C_i)$ and $\gamma(C_j)$ (recall that
$S_{i,j}\in\mathcal{S}$ is the reflection corresponding to the
triangle formed by $C_i,C_j$ and $\RR$).  In this situation we need
\begin{equation*}
\mbox{ if } \; (c\alpha_i+d)(c\alpha_j+d)>0  \mbox{ then }\;\kappa(\gamma(S_{i,j}C_k))>T\; \quad \forall k\neq i,j\;.
\end{equation*}  
By  Lemma~\ref{curvatureformula}, $\kappa(\gamma S_{i,j}C_k) = \frac{(c\alpha_{i,j}^{(k)}+d)^2}{r_{i,j}^{(k)}}$  if $S_{i,j}C_k = C(\alpha_{i,j}^{(k)} + \ii r_{i,j}^{(k)}, r_{i,j}^{(k)})$, which gives condition \eqref{bounding2}.

Case 2:  If $C_i$ and $C_j$ are not tangent. We need to make sure
that $\gamma(C_k)$ has curvature $>T$ whenever $\gamma(C_k)$ is in
between $\gamma(C_i)$ and $\gamma(C_j)$.

Again from Observation \ref{inbetween}, if $(c\alpha_i+d)(c\alpha_j+d)>0$, then the circles whose tangencies lie between $\alpha_i$ and $\alpha_j$ will be mapped by $\gamma$ to some circles between $\gamma(C_i)$ and $\gamma(C_j)$; if $(c\alpha_i+d)(c\alpha_j+d)<0$, then the complement of  will be mapped in between $\gamma(C_i)$ and $\gamma(C_j)$.  Thus we need $\kappa(\gamma(C_k))>T$  if either  
\begin{equation*}
  \alpha_k \mbox{ lies between } \alpha_i, \alpha_j \mbox{ and } (c\alpha_i+d)(c\alpha_j+d)>0
  \end{equation*}
  or 
  \begin{equation*}
\alpha_k \mbox{ does not lie between } \alpha_i, \alpha_j \mbox{ and } (c\alpha_i+d)(c\alpha_j+d)<0
\end{equation*}
This gives condition \eqref{bounding6} once we use $\kappa(\gamma(C_k)) = (c\alpha_k+d)^2/r_k$  (Lemma~\ref{curvatureformula}).

From Lemma~\ref{comparecurv}, once \eqref{bounding2}  or \eqref{bounding5} hold when $C_i$, $C_j$ are tangent, or \eqref{bounding6} is satisfied if $C_i$ and $C_j$ are disjoint, then all circles between $\gamma(C_i)$ and $\gamma(C_j)$ have curvatures greater than $T$.  
Putting the above together, we have established Proposition~\ref{omegaij}. 
\end{proof}

\subsection{The contribution of each pair of circles to the gap distribution function} 
We need to calculate the contribution from each pair of circles
$(C_i,C_j)$ to the proportion $F_{T,\mathcal{I}}(s)$ of  gaps of size at most $s$, and we denote this quantity by
$F_{T,\mathcal{I}}^{i,j}(s)$: ,
\begin{equation}\label{def of FijT}
F_{T,\mathcal{I}}^{i,j}(s)=\frac{\#\{(x_{T,\mi}^l,x_{T,\mi}^{l+1})\in
\Gamma(\alpha_i,\alpha_j)
:\frac{d(x_{T,\mi}^l,x_{T,\mi}^{l+1})\}}{\langle d_{T,\mathcal{I}}\rangle}\leq
s\}}{\#\mathcal{A}_{T,\mi}}
\end{equation}
We will later show that these have a limit as $T\to \infty$:
\begin{equation}\label{def of Fij}
F^{i,j}(s):=\lim_{T\to \infty} F_{T,\mathcal{I}}^{i,j}(s)
\end{equation}

A direct computation shows that the distance between $\gamma(C_i)$ and $\gamma(C_j)$ is given by
$\big\rvert\frac{\alpha_i-\alpha_j}{(c\alpha_i+d)(c\alpha_j+d)}\big\rvert$.  The average gap is
asymptotically $1/c_{\PP,C_0}T$.  Therefore, the relative gap condition 
$$
\frac{\big\rvert\frac{\alpha_i-\alpha_j}{(c\alpha_i+d)(c\alpha_j+d)}\big\rvert}{1/c_{\PP,C_0}T(1+o(1))}\leq s
$$ 
in the definition of $F_{T,\mathcal{I}}^{i,j}$ can also be written as
\al{\label{bounding11}\big\rvert(c\alpha_i+d)(c\alpha_j+d)\big\rvert\geq 
\frac{\rvert(\alpha_i-\alpha_j)\rvert c_{\PP,C_0} T}{s}(1+o(1)) \;. }
We will work with a simpler condition
\al{\label{bounding3}\big\rvert(c\alpha_i+d)(c\alpha_j+d)\big\rvert\geq 
\frac{\rvert(\alpha_i-\alpha_j)\rvert c_{\PP,C_0} T}{s} \;. }
which will finally lead to a limiting gap distribution function $F(s)$ which is continuous, then in retrospect working with \eqref{bounding11} will lead to the same limiting function $F(s)$, by the continuity of $F$.

If one of $\alpha_i$ and $\alpha_j$ is $\infty$ (say $\alpha_j=\infty$),  the above is changed to 
\begin{equation}\label{bounding4}
c|(c\alpha_i+d)|\geq \frac{c_{\PP,C_0}T}{s} \;.
\end{equation}


We define a region $\Omega_T^{i,j}(s)$ to  be the elements $(c,d)\in \Omega_T^{i,j}$ satisfying \eqref{bounding3} (or \eqref{bounding4}). Then we have found that 
\begin{equation}
\label{formula for FijT}
 F_{T,\mathcal{I}}^{i,j}(s)=\frac{ 1}{\#\mathcal{A}_{T,\mi}}\#\left\{\gamma = \begin{pmatrix} a_{\gamma}&b_{\gamma}\\c_{\gamma}&d_{\gamma}\end{pmatrix}\in \Gamma: \gamma\alpha_i,\gamma\alpha_j\in\mathcal{I}, (c_{\gamma},d_{\gamma})\in \Omega_T^{i,j}(s)\right \}
\end{equation}

The region $\Omega_T^{i,j}(s)$ is compact because
the condition \eqref{bounding} already gives a compact region
bounded by two sets of parallel lines with different slopes. Note also that
$\Omega_T^{i,j}(s)$ is a finite union of convex sets.

From the defining equation, it is clear that the region $\Omega_T^{i,j}(s)$ grows homogeneously with respect to $T$: 
$$\Omega_T^{i,j}(s)=\sqrt{T}\Omega_1^{i,j}(s)\;.$$  

\subsection{Strong repulsion}
As a consequence of the analysis above, we find  that the normalized gaps in our circle packings all bounded away from zero: 
\begin {cor}
There is some $\delta = \delta(\mathcal P, C_0)>0$ so all gaps satisfy 
$$  {d(x^{i}_{T},x^{i+1}_{T}) c_{\mathcal P, C_0}T } \geq \delta>0$$
\end{cor}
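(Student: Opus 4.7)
My plan is to extract a uniform upper bound on $|(c_\gamma\alpha_i+d_\gamma)(c_\gamma\alpha_j+d_\gamma)|$ (or $c_\gamma|c_\gamma\alpha_i+d_\gamma|$ when $\alpha_j=\infty$) directly from the defining inequalities of $\Omega_T^{i,j}$, and then translate this into a lower bound on the actual arclength gap via the explicit formula for $d(\gamma\alpha_i,\gamma\alpha_j)$. Normalizing by $c_{\mathcal{P},C_0}T$ will make the bound independent of $T$, and since there are only finitely many ordered pairs $(C_i,C_j)\in\mathcal{K}_0\times\mathcal{K}_0$, taking the minimum will produce the required $\delta$.

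First I would invoke Proposition~\ref{omegaij} to write every pair of neighbours $(x^l_T,x^{l+1}_T)$ in $\mathcal{A}_T$ as $(\gamma\alpha_i,\gamma\alpha_j)$ for some $\gamma\in\Gamma$ and some $C_i,C_j\in\mathcal{K}_0$, with $(c_\gamma,d_\gamma)\in\Omega_T^{i,j}$. In the generic case $\alpha_i,\alpha_j\neq\infty$, the bounding conditions \eqref{bounding} give $(c_\gamma\alpha_i+d_\gamma)^2\leq r_i T$ and $(c_\gamma\alpha_j+d_\gamma)^2\leq r_j T$, hence $|(c_\gamma\alpha_i+d_\gamma)(c_\gamma\alpha_j+d_\gamma)|\leq \sqrt{r_ir_j}\,T$ by AM--GM. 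Combined with the identity $d(\gamma\alpha_i,\gamma\alpha_j)=|\alpha_i-\alpha_j|/|(c_\gamma\alpha_i+d_\gamma)(c_\gamma\alpha_j+d_\gamma)|$, this gives the clean lower bound
\[
 d(\gamma\alpha_i,\gamma\alpha_j)\,c_{\mathcal P,C_0}T \;\geq\; c_{\mathcal P,C_0}\,\frac{|\alpha_i-\alpha_j|}{\sqrt{r_ir_j}}\;>\;0.
\]
The boundary case $\alpha_j=\infty$ is analogous: \eqref{boundinginfty} gives $2c_\gamma^2\leq T$ and $(c_\gamma\alpha_i+d_\gamma)^2\leq r_iT$, so $c_\gamma|c_\gamma\alpha_i+d_\gamma|\leq\sqrt{r_i/2}\,T$, and the short computation $\gamma\alpha_i-\gamma\infty=-1/(c_\gamma(c_\gamma\alpha_i+d_\gamma))$ (using $\det\gamma=1$) yields the lower bound $c_{\mathcal P,C_0}\sqrt{2/r_i}$ for the normalized gap.

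Setting $\delta$ equal to the minimum of these finitely many positive constants over the pairs $(i,j)$ with $C_i,C_j\in\mathcal{K}_0$ completes the argument in the generalized Ford case. The general case follows from this one by conformal invariance, deferred to \S\ref{transfer}. I do not foresee a serious obstacle: the work has already been done in setting up $\Omega_T^{i,j}$ via Proposition~\ref{omegaij}, and the only point requiring mild care is verifying that $|\alpha_i-\alpha_j|>0$ for all the relevant pairs in $\mathcal{K}_0$, which is automatic since distinct circles in $\mathcal{K}_0$ have distinct tangency points with $C_0$.
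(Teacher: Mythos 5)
Your proof is correct and follows essentially the same route as the paper: both arguments rest on Propositions~\ref{12} and \ref{omegaij}, the identity $d(\gamma\alpha_i,\gamma\alpha_j)=|\alpha_i-\alpha_j|/|(c_\gamma\alpha_i+d_\gamma)(c_\gamma\alpha_j+d_\gamma)|$, and the key observation that the curvature constraints \eqref{bounding} (resp.\ \eqref{boundinginfty}) bound this quadratic form by a constant multiple of $T$ — the paper packages this as compactness of $\Omega_1^{i,j}$, which forces $\Omega_1^{i,j}(s)=\emptyset$ and hence $F_T(s)=0$ for small $s$, while you obtain the bound by directly multiplying the two inequalities (this is just taking the product, not AM--GM, but the conclusion is the same). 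Your version has the minor merit of producing an explicit constant, $\delta=\min_{i,j}c_{\mathcal P,C_0}|\alpha_i-\alpha_j|/\sqrt{r_ir_j}$ together with $c_{\mathcal P,C_0}\sqrt{2/r_i}$ for the pairs involving $\alpha_j=\infty$, rather than an abstract maximum over a compact set.
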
 
Thus the limiting distribution $F(s)$ is supported away from the origin.
This is a very strong form of level repulsion, familiar from the theory of the Farey sequence.

\begin{proof}
To prove the assertion, note that we expressed the distribution function $F_T$ as a sum over all (unordered) pairs of distinct circles from the initial configuration $C_i,C_j\in \mathcal K_0$: 
$$ F_T(s) = \  \sum_{C_i,C_j \text{ tangent}} F_T^{i,j}(s)+\sum_{C_i,C_j \text{ disjoint}} F_T^{i,j}(s)$$
where  
$$
F_{T,\mathcal{I}}^{i,j}(s) = \frac 1{\#\mathcal A_{T,\mathcal{I}}} \#\{\begin{pmatrix}a_{\gamma}&b_{\gamma}\\c_{\gamma}&d_{\gamma} \end{pmatrix} \in \Gamma: \frac {(c_{\gamma},d_{\gamma})}{\sqrt{T}}\in \Omega_1^{i,j}(s) \}
$$
Here $\Omega_1^{i,j}(s) $ is the region in the plane of points lying in the compact sets $\Omega_1^{i,j}$ of \S~\ref{sec:determining}, 
satisfying  $|Q_{i,j}(x,y)| >\frac{c_{\mathcal P, C_0}}{s}$. 
$$ \Omega_1^{i,j}(s) = \{(x,y)\in \Omega_1^{i,j}:  |Q_{i,j}(x,y)| >\frac{c_{\mathcal P, C_0}}{s} \}
$$
with 
$$ Q_{i,j}(x,y) = \begin{cases}  (x\alpha_i+y)(x\alpha_j+y),&i,j\neq \infty \\x(x\alpha_j+y),& i=\infty \end{cases}
$$
 The sets $\Omega_1^{i,j}$ are compact, and therefore the functions $|Q_{i,j}(x,y)|$   are bounded on them. Thus if 
$$ \delta(\mathcal P, C_0) =\frac  {c_{\mathcal P, C_0}}{  \max_{i,j}\max (    |( Q_{i,j}(x,y)|: (x,y)\in \Omega_1^{i,j} )}
$$
then $\delta>0$ and $F_T(s)=0$ if $0\leq s\leq \delta$. 
\end{proof} 

In the examples of Section~\ref{sec:examples}, we have $\delta=3/\pi^2=0.303964$ for the classical Apollonian packing \S~\ref{sec:app1} (which reduces to the Farey sequence by conformal invariance), $\delta = 2\sqrt{2}/\pi^2=0.28658$ for the Apollonian-3 packing \S~\ref{sec:app3}, and $\delta =5(1+\sqrt{5})/(6\pi^2)=0.273235$ for the Apollonian-9 packing  \S~\ref{sec:app9}. 

\section{The limiting distribution $F(s)$} \label{sec:limit F}
\subsection{Using Good's theorem}
We now pass to the limit $T\to \infty$, by relating the counting problem encoded in our formula \eqref{formula for FijT} for $F^{i,j}_{T,\mathcal{I}}(s)$ to the area of the region $\Omega_1^{i,j}(s)$. This is done via an equidistribution theorem of A.~Good. To formulate it, recall the Iwasawa decomposition 
$${\rm PSL}(2,\RR)=N^{+}AK$$ 
where
$$
N^{+}=\left\{\mat{1&x\\0&1}\Big\rvert x\in\RR\right\}, \quad
A=\left\{\mat{y^{-\frac{1}{2}}&\\&y^{\frac{1}{2}}}\Big\rvert y>0\right\},
$$
and
$$
K=PSO2=\{\mat{\cos\theta&-\sin\theta\\\sin\theta&\cos\theta}\Big\rvert\theta\in[0,\pi)\}\;.
$$
We can uniquely write any   $\gamma\in PSL(2,\RR)$ as 
\begin{equation}\label{Iwasawa} 
\gamma = \mat{1&x(\gamma)\\0&1} \mat{y(\gamma)^{-\frac{1}{2}}&\\&y(\gamma)^{\frac{1}{2}}}\mat{\cos\theta(\gamma)&-\sin\theta(\gamma)\\\sin\theta(\gamma)&\cos\theta(\gamma)}
\end{equation}
We have the following joint equidistribution according to this decomposition, which is another special case of Good's Theorem 
(see Corollary on Page 119 of \cite{Go83}): 
\begin{thm}[Good\footnote{We state the result slightly differently than in \cite{Go83}: We use $PSL(2,\RR)$ instead Good's original form in $SL(2,\RR)$; 
his parametrization for $K$   differs from ours. 
 }]\label{nak}
Let $\mathcal{I}$ be an bounded interval in $\RR$, $\mathcal{J}$ be an interval in $[0,\pi)$, then as $T\to \infty$, 
$$
\#\Big\{\gamma\in\Gamma: x(\gamma)\in\mathcal{I}\;,\; y(\gamma)<T,\; \theta(\gamma)\in\mathcal{J} \Big\}
\sim \frac{1}{ \area(\Gamma)} l(\mathcal{I}) \frac{l(\mathcal{J})}{\pi}T \;.
$$
\end{thm}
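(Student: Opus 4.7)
The plan is to interpret the three Iwasawa parameters geometrically through the action of $\gamma$ on the upper half plane $\mathbb{H}$, and reduce the theorem to a hyperbolic lattice-point count. A direct calculation shows that if $\gamma = n(x)a(y)k(\theta)$, then $\gamma\cdot i = x + i/y$ (with $y = c_\gamma^2 + d_\gamma^2$), so the conditions $x(\gamma)\in\mathcal{I}$ and $y(\gamma)<T$ are exactly the requirement that the orbit point $\gamma\cdot i$ lie in the horizontal strip $S_T=\mathcal{I}\times(1/T,\infty)\subset\mathbb{H}$, while $\theta(\gamma)=\arg(d_\gamma+ic_\gamma)$ records the direction of the bottom row. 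The hyperbolic area of $S_T$ is $\int_\mathcal{I}dx\int_{1/T}^{\infty}y^{-2}dy = l(\mathcal{I})\,T$, which is what produces the linear growth in $T$.

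Heuristically, the orbit $\Gamma\cdot i$ equidistributes in $\Gamma\backslash\mathbb{H}$ relative to hyperbolic area normalized by $\area(\Gamma)$, yielding roughly $l(\mathcal{I})T/\area(\Gamma)$ lattice points in $S_T$; independently, the angles $\theta(\gamma)$ equidistribute in $[0,\pi)$ with respect to Lebesgue measure, which contributes the factor $l(\mathcal{J})/\pi$. Multiplying these two densities gives the claimed constant $l(\mathcal{I})l(\mathcal{J})/(\pi\area(\Gamma))$.

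To turn this into a proof I would follow Good's spectral route. First smooth the indicator of $\mathcal{I}\times[0,T)\times\mathcal{J}$ to a family of test functions $f_\epsilon$ on $PSL(2,\RR)$, form the automorphic sum $F_\epsilon(g)=\sum_{\gamma\in\Gamma}f_\epsilon(g^{-1}\gamma)$, and compute $F_\epsilon(e)$ via the spectral decomposition of $L^2(\Gamma\backslash PSL(2,\RR))$. Using Haar measure $y^{-2}dx\,dy\,d\theta$ and $\mathrm{vol}(\Gamma\backslash PSL(2,\RR)) = \pi\,\area(\Gamma)$, the projection onto the constant function produces the main term with exactly the predicted constant. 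The main obstacle is the error analysis: the sharp cutoff $y<T$ forces a delicate balance between the smoothing scale $\epsilon$ and the spectral decay of non-main terms, and one must bound the contributions from Maass cusp forms and unitary Eisenstein series uniformly in $T$. This is accomplished in Good's paper by realizing the smoothed sum as an incomplete Eisenstein series and estimating non-main terms via Rankin--Selberg unfolding combined with polynomial bounds on the spectral parameters, showing that the error is $o(T)$ and thereby recovering the stated asymptotic.
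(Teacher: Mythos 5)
The first thing to say is that the paper does not prove this statement: Theorem~\ref{nak} is imported, up to the normalization discussed in its footnote, from Good's monograph \cite{Go83} (the Corollary on page 119), and the paper's entire justification is that citation. So the comparison is really between your sketch and Good's theorem itself. Your geometric translation is correct and is a genuine addition: with the paper's Iwasawa convention one indeed has $\gamma\cdot i = x(\gamma)+ i/y(\gamma)$ with $y(\gamma)=c_\gamma^2+d_\gamma^2$ and $\theta(\gamma)=\arg(d_\gamma+i c_\gamma)$; the hyperbolic area of $\mathcal{I}\times(1/T,\infty)$ is $l(\mathcal{I})\,T$; and $\mathrm{vol}\bigl(\Gamma\backslash PSL(2,\RR)\bigr)=\pi\area(\Gamma)$ for the measure $y^{-2}\,dx\,dy\,d\theta$ with $\theta\in[0,\pi)$. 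Hence the claimed constant is exactly the ratio of the volume of the target region to the covolume, which is a useful check that the paper's restatement of Good's result is correctly normalized.

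As a proof, however, your proposal has a genuine gap, which you partly acknowledge: all of the analytic content is in your last paragraph, and there you defer to ``Good's paper.'' The heuristic of multiplying the two densities assumes precisely the joint equidistribution that is to be proven. Moreover, the region $\mathcal{I}\times(1/T,\infty)\times\mathcal{J}$ expands toward the boundary of the upper half-plane (a horocyclic expansion, not a radial one), so the standard hyperbolic lattice-point asymptotic for balls does not apply; the relevant statement is equidistribution of expanding horocycle translates in $\Gamma\backslash PSL(2,\RR)$, jointly with the angle coordinate. Evaluating the automorphic sum $F_\epsilon$ at the identity from its $L^2$-spectral decomposition also requires an extra two-sided smoothing (pre-trace-formula) step and uniform control of the cuspidal and Eisenstein contributions as the smoothing scale $\epsilon$ shrinks relative to $T$; none of this is carried out. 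In the end your attempt rests on the same external black box as the paper does --- the difference is only that the paper says so explicitly, whereas your write-up is formatted as if it were a proof. Either cite Good as the paper does, or be prepared to execute the spectral error analysis in full.
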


Theorem \ref{nak} allows us to prove the following

\begin{prop}\label{expanding}
For a ``nice" subset $\Omega\subset \{(c,d): c\geq 0\}$,
$$
\#\Big\{\gamma = \begin{pmatrix} *&*\\ c_\gamma &d_\gamma \end{pmatrix}
\in\Gamma_{\infty}\backslash\Gamma: x(\gamma)\in\mathcal{I} \;,\,(c_\gamma,d_\gamma)\in \sqrt{T}\Omega \Big\}
\sim \frac{2l(\mathcal{I})m(\Omega)}{\pi \area(\Gamma)} T
$$
as $T\to \infty$, where $m$ is the standard Lebesgue measure in $\RR^2$. Here ``nice'' means bounded, convex with piecewise smooth boundary. 
\end{prop}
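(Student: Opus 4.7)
The plan is to deduce Proposition \ref{expanding} from Theorem \ref{nak} by a change of variables from the Iwasawa parameters $(y,\theta)$ to the bottom-row coordinates $(c,d)$. Explicitly multiplying out the factorization \eqref{Iwasawa} yields
\begin{equation*}
c_\gamma = y(\gamma)^{1/2}\sin\theta(\gamma), \qquad d_\gamma = y(\gamma)^{1/2}\cos\theta(\gamma),
\end{equation*}
so $(y,\theta)\mapsto (c,d)$ is a diffeomorphism from $(0,\infty)\times(0,\pi)$ onto the open right half-plane $\{c>0\}$, with Jacobian $dc\,dd = \frac{1}{2}\,dy\,d\theta$. Since the bottom row is invariant under left multiplication by $\Gamma_\infty$, the condition $(c_\gamma,d_\gamma)\in\sqrt{T}\Omega$ descends to $\Gamma_\infty\backslash\Gamma$; the condition $x(\gamma)\in\mathcal{I}$ then pins down a specific representative in each coset (indeed, for $\mathcal{I}$ of length smaller than the cusp width this selects at most one representative per coset, so the count on cosets agrees with the count on $\Gamma$).

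First I would verify the proposition when $\Omega$ is a polar rectangle $B = \{y^{1/2}(\sin\theta,\cos\theta) : y_1<y<y_2,\ \theta_1<\theta<\theta_2\}$. For such $B$, membership of $(c_\gamma,d_\gamma)$ in the dilate $\sqrt{T}B$ becomes $y(\gamma)\in (Ty_1,Ty_2)$ and $\theta(\gamma)\in (\theta_1,\theta_2)$, so two applications of Theorem \ref{nak} (with upper bounds $Ty_2$ and $Ty_1$) followed by subtraction produce the asymptotic
\begin{equation*}
\frac{l(\mathcal{I})(\theta_2-\theta_1)(y_2-y_1)T}{\pi\,\area(\Gamma)}.
\end{equation*}
The Jacobian formula gives $m(B)=\tfrac12(y_2-y_1)(\theta_2-\theta_1)$, so this is exactly $\frac{2l(\mathcal{I})m(B)T}{\pi\,\area(\Gamma)}$, matching the claim.

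For a general nice $\Omega$, I would approximate from inside and outside by finite unions of polar rectangles: since $\Omega$ is bounded with piecewise smooth boundary (so $m(\partial\Omega)=0$), for every $\varepsilon>0$ there exist $\underline{\Omega}_\varepsilon\subset\Omega\subset\overline{\Omega}_\varepsilon$, each a finite union of polar rectangles, with $m(\overline{\Omega}_\varepsilon\setminus\underline{\Omega}_\varepsilon)<\varepsilon$. Applying the elementary case to each rectangle and adding up gives two-sided bounds on the desired count, differing by the count associated to the boundary strip; sending $T\to\infty$ first and then $\varepsilon\to 0$ yields the asserted asymptotic with constant $\frac{2l(\mathcal{I})m(\Omega)}{\pi\,\area(\Gamma)}$.

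The main obstacle will be justifying the squeeze uniformly in $\varepsilon$. The cleanest route is to apply the polar-rectangle case directly to the boundary strip $\overline{\Omega}_\varepsilon\setminus\underline{\Omega}_\varepsilon$, bounding the number of $\gamma$ with $(c_\gamma,d_\gamma)\in\sqrt{T}(\overline{\Omega}_\varepsilon\setminus\underline{\Omega}_\varepsilon)$ by $O(\varepsilon T)$ with an implied constant independent of $\varepsilon$; this ensures that the limits $T\to\infty$ and $\varepsilon\to 0$ decouple. Alternatively, one may invoke a power-saving error term in Good's theorem and take the mesh of the approximating rectangles to zero like a small negative power of $T$.
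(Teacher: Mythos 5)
Your proposal is correct and follows essentially the same route as the paper: the identification $(c_\gamma,d_\gamma)=(y(\gamma)^{1/2}\sin\theta(\gamma),\,y(\gamma)^{1/2}\cos\theta(\gamma))$ making $(y^{1/2},\theta)$ polar coordinates in the $(d,c)$-plane, verification on truncated sectors (your polar rectangles) by differencing Good's theorem, and an inner/outer approximation of a nice $\Omega$ by finite unions of such sectors. Your closing worry about uniformity in $\varepsilon$ is unnecessary: since the count is monotone in $\Omega$ and each approximant has an exact asymptotic, the standard $\liminf$/$\limsup$ squeeze already decouples the limits $T\to\infty$ and $\varepsilon\to 0$, which is exactly how the paper concludes.
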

 \begin{proof}
We note that if $\gamma = \begin{pmatrix} *&*\\c&d\end{pmatrix}$ then the Iwasawa decomposition \eqref{Iwasawa} gives $d=y^{1/2}\cos\theta$, $c=y^{1/2}\sin\theta$ so that $(y^{1/2},\theta)$ are polar coordinates in the $(d,c)$ plane. 

We prove Proposition~\ref{expanding} in the special case that
$\Omega$ is bounded by two continuous and piecewise smooth curves
$r_1(\theta),r_2(\theta)$ with $\theta\in[\theta_1,\theta_2]$, and
$r_2(\theta)\geq r_1(\theta)$.  The sign $``="$ is obtained only if when
$\theta=\theta_1\text{ or }\theta_2$.  This special case suffices
for what we need for $\Omega=\Omega_1^{i,j}(s)$.  This is essentially a Riemann sum argument.  First we can express $m(\Omega)$ as
$$
m(\Omega)=\frac 12 \int_{\theta_1}^{\theta_2}r_2^2(\theta)-r_1^2(\theta)d\theta \;.
$$ 
Divide the interval $I=[\theta_1,\theta_2]$ into $n$ equal subintervals 
  $\{I_i|i=1,\hdots,n\}$.  For each subinterval  $I_i$, pick 
$$\theta_{1,i}^{+},\theta_{1,i}^{-},\theta_{2,i}^{+},\theta_{2,i}^{-}$$
at which  $r_1$ and $r_2$ achieve their maximum and minimum respectively.
Let $\Omega_n^{+}$ be the union of truncated sectors
$I_i\times[r_1(\theta_{1,i}^{-}),r_2(\theta_{2,i}^{+})]$, and
$\Omega_n^{-}$ be the union of
$I_i\times[r_1(\theta_{1,i}^{+}),r_2(\theta_{2,i}^{-})]$.  
We have
$$\Omega_n^{-}\subseteq\Omega\subseteq\Omega_n^{+}$$ and
$$
\lim_{n\rightarrow\infty}m(\Omega_n^{-})=\lim_{n\rightarrow\infty}m(\Omega_n^{+})=m(\Omega) \;.
$$

We notice that the statement in Proposition~\ref{expanding} satisfies
finite additivity. From Theorem \ref{nak}, we know Proposition~\ref{expanding} holds for sectors, thus it holds for finite union of
truncated sectors, which is the case for $\Omega_{n}^{+}$ and $\Omega_n^{-}$.
Letting $n$ go to infinity, we prove Proposition~\ref{expanding} for $\Omega$.
\end{proof}

\subsection{The formula for $F(s)$} 

We can now state our main result, the formula for the limiting gap distribution $F(s)$, along the way proving Theorem~\ref{gapdistribution}. We keep our previous notation. 
\begin{thm}\label{thm:formula for F}
For any interval $\mathcal{I}\subset C_0$, $\lim_{T\to \infty} F_{T,\mathcal I}(s) = F(s)$, where 
\begin{equation}\label{whatisf} 
F(s)= \frac{2}{D}\left(\sum_{C_i,C_j \;{\rm tangent}} 
m(\Omega_1^{i,j}(s))+
2\sum_{C_i,C_j \;{\rm disjoint}}m(\Omega_1^{i,j}(s))\right) 
\end{equation}
with $D$ given in \eqref{definitiond}.
\end{thm}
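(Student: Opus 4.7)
The plan is to compute the limit $F^{i,j}(s) := \lim_{T\to\infty} F^{i,j}_{T,\mathcal{I}}(s)$ separately for each unordered pair $\{C_i, C_j\} \subset \mathcal{K}_0$, and then to assemble $F(s)$ by summing these contributions with the multiplicities identified in the preamble to Proposition~\ref{omegaij}.

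Starting from the explicit formula \eqref{formula for FijT}, I would first simplify the localization $\gamma\alpha_i, \gamma\alpha_j \in \mathcal{I}$ so that Proposition~\ref{expanding} can be applied directly. The identities
\begin{equation*}
\gamma\alpha_i - \gamma\infty = \frac{-1}{c_\gamma(c_\gamma\alpha_i + d_\gamma)}, \qquad \gamma\infty - x(\gamma) = \frac{\cot\theta(\gamma)}{y(\gamma)},
\end{equation*}
together with $y(\gamma) = c_\gamma^2 + d_\gamma^2 \asymp T$ on the rescaled region $\sqrt{T}\,\Omega_1^{i,j}(s)$, give $\gamma\alpha_i = x(\gamma) + O(1/T)$ uniformly in the bulk of that region. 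Hence the condition $\gamma\alpha_i \in \mathcal{I}$ may be replaced by $x(\gamma) \in \mathcal{I}$ up to a boundary error which will turn out to be $o(T)$. The second condition $\gamma\alpha_j \in \mathcal{I}$ is asymptotically redundant because $|\gamma\alpha_i - \gamma\alpha_j|$ has order $1/T$, much smaller than $l(\mathcal{I})$. The cases $\alpha_i = \infty$ or $\alpha_j = \infty$ are handled analogously using $\gamma\infty = a_\gamma/c_\gamma$.

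Proposition~\ref{expanding}, applied to the bounded, piecewise smoothly bounded region $\Omega_1^{i,j}(s)$---a finite union of convex pieces cut out by straight lines and conics---then gives
\begin{equation*}
\#\{\gamma \in \Gamma : x(\gamma) \in \mathcal{I},\ (c_\gamma, d_\gamma) \in \sqrt{T}\,\Omega_1^{i,j}(s)\} \sim \frac{2\, l(\mathcal{I})\, m(\Omega_1^{i,j}(s))}{\pi\, \area(\Gamma)}\, T.
\end{equation*}
Dividing by $\#\mathcal{A}_{T,\mathcal{I}} \sim l(\mathcal{I})\, D\, T/(2\pi^2(h-2))$ from Theorem~\ref{equidistribution} and using $\area(\Gamma) = 2\pi(h-2)$ from Proposition~\ref{propertyofgamma}, the factor $l(\mathcal{I})\, T$ cancels cleanly and I obtain $F^{i,j}(s) = 2\, m(\Omega_1^{i,j}(s))/D$, independent of $\mathcal{I}$ as claimed. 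Summing over unordered pairs with tangent pairs counted once (since $\Gamma\cdot(C_i,C_j) = \Gamma_0\cdot(C_i,C_j)$ in that case) and disjoint pairs counted twice---the other half of $\Gamma_0\cdot(C_i,C_j)$, namely $\Gamma\cdot(SC_i,SC_j)$, contributing identically by the mirror symmetry through $\Re z = t/2$---then produces the stated formula \eqref{whatisf}. Continuity and piecewise smoothness of $F$ follow from the corresponding properties of $s \mapsto m(\Omega_1^{i,j}(s))$, since each region is cut out by finitely many polynomial inequalities whose boundary varies piecewise smoothly in $s$.

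The main technical hurdle is the rigorous replacement of $\gamma\alpha_i$ by $x(\gamma)$ in the localization condition: the pointwise bound $\gamma\alpha_i - x(\gamma) = O(1/T)$ degrades near the part of $\partial\Omega_1^{i,j}(s)$ on which $c_\gamma$ becomes small, so one must verify that the lattice points in such thin strips number only $o(T)$. This amounts to a mild uniformity strengthening of Proposition~\ref{expanding}, which can be obtained by truncating $\Omega_1^{i,j}(s)$ away from the line $c=0$ and checking that the discarded contribution is negligible compared to the main term of order $T$.
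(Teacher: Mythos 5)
Your proposal is correct and follows essentially the same route as the paper's proof: the same decomposition into unordered pairs with tangent pairs counted once and disjoint pairs twice (justified by the same $\Gamma$ versus $\Gamma_0$ orbit observation), the same reduction via \eqref{formula for FijT} to a lattice count over $\Omega_T^{i,j}(s)=\sqrt{T}\,\Omega_1^{i,j}(s)$, the same application of Proposition~\ref{expanding} (Good's theorem), and the same normalization by $\#\mathcal{A}_{T,\mathcal{I}}$ yielding $F^{i,j}(s)=\tfrac{2}{D}m(\Omega_1^{i,j}(s))$. The only differences are cosmetic: you route the localization estimate $\gamma\alpha_i = x(\gamma)+O(1/T)$ through $\gamma\infty$ and truncate near $c=0$, whereas the paper works directly with the Iwasawa formula and excises small sectors $B_\beta$ around the bad angles together with a sandwich by intervals $\mathcal{I}_\epsilon^{\pm}$ --- the same idea in slightly different clothing.
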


\begin{proof}
We saw that $$F_{T,\mathcal{I}}(s) = \sum_{C_i,C_j\text{ tangent}} F_{T,\mathcal{I}}^{i,j}(s)+ 2\sum_{C_i,C_j\text{ disjoint}} F_{T,\mathcal{I}}^{i,j}(s)$$ and we need to estimate $F_{T,\mathcal{I}}^{i,j}$.  
 From Proposition~\ref{12} and Proposition~\ref{omegaij} we can rewrite 
 $F_{T,\mathcal{I}}^{i,j}$ as 
\al{\frac{\#\{\gamma\in\Gamma: \gamma\alpha_i,\gamma\alpha_j\in\mathcal{I},( c_{\gamma},d_{\gamma})\in\Omega_T^{i,j}\}}{\#\mathcal{A}_{T,\mathcal{I}}}}

In terms of the Iwasawa coordinates \eqref{Iwasawa}, the condition
$\gamma(\alpha_i),\gamma(\alpha_j)\in\mathcal{I}$ is essentially
equivalent to $x(\gamma)\in\mathcal{I}$, when $y(\gamma)$ tends to
$+\infty$, as we shall explain now.  Writing $x = x(\gamma)$, $y=y(\gamma)$ and $\theta = \theta(\gamma)$, we have
$$ \gamma(\alpha) = x+ \frac 1{y}\cdot  \frac{\alpha \cos \theta-\sin \theta}{\alpha \sin \theta + \cos \theta}$$
and since $y = y(\gamma)\gg 1$, this will essentially be $x(\gamma)$ provided $|\alpha \sin \theta + \cos \theta|$ is bounded away from zero.

We first excise small sectors $B_{\beta}$ of angle $\beta$ containing as bisectors those $\theta$'s
such that $\mat{\cos\theta& -\sin\theta\\\sin\theta&
\cos\theta}\alpha_{*}=\infty$, where $*=i$  or $j$.  Then for any $\epsilon>0$, $\exists$ $T_0$ such that if $(c_\gamma,d_\gamma)\in \Omega_{T}^{i,j}$   
and $y(\gamma)>T_0$, then
 $$| \frac 1{y}\cdot  \frac{\alpha_{*} \cos \theta-\sin \theta}{\alpha_{*} \sin \theta + \cos \theta}|<\epsilon$$  
 
 Set 
$$\tilde\Omega:= \Omega_T^{i,j}(s) \backslash (\Omega_{T_0}^{i,j}(s)\cup B_{\beta})$$
so that the area of $\tilde\Omega$ differs from that of $\Omega_T^{i,j}(s) $ by at most $O(\epsilon T)$. 
Then there are two intervals $\mathcal{I}_{\epsilon}^{-},\mathcal{I}_{\epsilon}^{+}$ such that $\mathcal{I}_{\epsilon}^{-}\subset\mathcal{I}\subset\mathcal{I}_{\epsilon}^{+}$, and $$l\left(\mathcal{I}-\mathcal{I}_{\epsilon}^{-}\right), l\left(\mathcal{I}_{\epsilon}^{+}-\mathcal{I}\right)<2\epsilon.$$
and so that  
\begin{multline*}\label{estimateofi}
\#\{\gamma:x(\gamma)\in\mathcal{I}_{\epsilon}^{-},( c_{\gamma},d_{\gamma}) \in\tilde\Omega\}\leq 
\#\{\gamma: \gamma\alpha_{*}\in\mathcal{I},( c_{\gamma},d_{\gamma})\in\tilde\Omega \}\\
\leq \#\{\gamma:x(\gamma)\in\mathcal{I}_{\epsilon}^{+},(c_{\gamma},d_{\gamma})\in \tilde\Omega\} \;.
\end{multline*}

 Applying Proposition~\ref{expanding}, which approximates the above cardinalities by 
$$\frac{2l(\mathcal{I}_\epsilon^{\pm}) m(\tilde \Omega)}{\pi \cdot 2\pi(h-2)} = \frac{2l(\mathcal{I}) m(\Omega_1^{i,j}(s))T}{\pi \cdot 2\pi(h-2)}+O(\epsilon T)$$
(recall $\area(\Gamma) = 2\pi(h-2)$), and dividing  by 
$$\#\mathcal{A}_{T,\mathcal{I}} \sim l(\mathcal{I}) c_{\mathcal{P},C_0}T =  l(\mathcal{I})  \frac{D}{2\pi^2(h-2)}T
$$ 
we obtain 
$$F_{T,\mathcal{I}}^{i,j}(s)=\frac{2m(\Omega_1^{i,j}(s))}{D}(1+o(1))(1+O(\epsilon))+O(\beta)\;.
$$
Letting $\beta,\epsilon\to 0$, we get  
\begin{equation}\label{pf of cont}
\lim_{T\to \infty} F_{\mathcal{I},T}^{i,j}(s) =\frac{2}{D}m(\Omega_1^{i,j}(s)):=F^{i,j}(s)\;,
\end{equation}
with $D$ is as in \eqref{definitiond}, and $m(\Omega_1^{i,j}(s))$ a
piecewise smooth, continuous function of $s$. 
Summing over all pairs of circles in $\mathcal{K}_0$, we get
\begin{equation*}
\begin{split}
  F(s)&=\sum_{C_i,C_j \text{
tangent}}F^{i,j}(s)+2\sum_{C_i,C_j \text{
disjoint}}F^{i,j}(s)\\&=\frac{2}{D}\left(\sum_{C_i,C_j \text{
tangent}}m(\Omega_1^{i,j}(s))+2\sum_{C_i,C_j \text{
disjoint}}m(\Omega_1^{i,j}(s))\right) 
\end{split}
\end{equation*}

The reason there is an extra factor of $2$ for $C_i,C_j$ disjoint is 
because $\Gamma(C_i,C_j)$ only parametrizes half
of the gaps formed by the $\Gamma_0$-orbits of $C_i,C_j$, and the
contribution from $S(C_i)$, $S(C_j)$ is identical  to that  from $C_i$,
$C_j$.
\end{proof}


\section{Conformal invariance}\label{transfer}
Let $M \in SL(2,\CC)$ be a M\"obius transformation. In this section we show that if
a circle packing $\PP$ and a circle $C_0$ from $\PP$ satisfy 
Theorem~\ref{equidistribution} with some constant $c_{\PP,C_0}$, 
and Theorem~\ref{gapdistribution} with some piecewise smooth continuous function
$F$, then the packing $M(\PP)$ and $M(C_0)$ also satisfy 
Theorem~\ref{equidistribution} and Theorem~\ref{gapdistribution} with the
same constant and distribution function.

 In fact we will give a more refined statement: For any pair of distinct circles $C_i,C_j\in \mathcal K_0$  in the initial configuration, 
the densities $F^{i,j}_{\mathcal{I}}(s)$ defined in \eqref{def of FijT},  \eqref{def of Fij} are conformally invariant.
This is particularly useful in reducing the computations of the limiting densities in the examples of \S~\ref{sec:examples} to manageable length. 
The argument itself is routine: The claims are obvious if $M$ is a dilation, and are proved for  general $M\in SL(2,\CC)$ by localizing.  


 \begin{thm}\label{transferprinciple}
Let $\mathcal{I}\subset C_0$ be an arc  (if $C_0$ is a line then take a bounded interval), and $M\in SL(2,\CC)$. 
Assume  $\exists\; 0< b<B$ such that $b<|M^{'}(x)|<B$ for any $x\in\mathcal{I}$. 
 
i) For the packing  $M(\PP)$  and  the base circle $M(C_0)$,
 $$\#\mathcal{A}_{M(\mathcal{I}),T}\sim c_{\PP,C_0}\cdot l(M(\mathcal{I})) \cdot T,\quad {\rm as}\; T\to \infty\;.$$
 
ii) For any pair of distinct circles $C_i,C_j\in \mathcal K_0$  in the initial configuration, the densities $F^{i,j}_{\mathcal{I}}(s)$ 
are conformally invariant: 
\begin{equation}\label{conformal inv of Fij}
F_{T,\mathcal{I}}^{i,j}(s)\sim F_{T,\mathcal{M(I)}}^{i,j}(s),\quad T\to \infty, 
\end{equation}
where on the RHS,  $F_{T,\mathcal{M(I)}}^{i,j}(s)$ refers to the gaps associated with the pair of circles $M(C_i)$, $M(C_j)$ and the packing $M(\PP)$ with base circle $M(C_0)$.

iii) As a consequence, for all $M$ and subarcs $\mathcal{I}\subset C_0$,  
 $$\lim_{T\to \infty}F_{M(\mathcal{I}),T}(s) = F(s).$$
 \end{thm}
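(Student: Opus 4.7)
The argument is by localization. The hypothesis $b<|M'|<B$ on $\mathcal{I}$ lets us, for any $\epsilon>0$, partition $\mathcal{I}$ into finitely many subarcs $\mathcal{I}_k$ on each of which $\sup|M'|/\inf|M'|\leq 1+\epsilon$; fix $\lambda_k:=|M'(x_k)|$ for some $x_k\in\mathcal{I}_k$. On each such subarc $M$ behaves like a dilation by $\lambda_k$, and two scaling laws let us reduce each statement to its analogue for $\PP$ on $\mathcal{I}_k$. Arclength scales as $l(M(\mathcal{J}))=\int_\mathcal{J}|M'|\,dl$, and the M\"obius transformation $M$ sends a circle tangent to $C_0$ at $x$ of curvature $\kappa$ to a circle tangent to $M(C_0)$ at $M(x)$ of curvature $\kappa/|M'(x)|$, a consequence of conformality or a direct computation analogous to Lemma \ref{curvatureformula}.

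For part (i), the curvature scaling gives a bijection between $\mathcal{A}_{M(\mathcal{I}_k),T}$ in the packing $M(\PP)$ and the tangencies of $\PP$ in $\mathcal{I}_k$ whose curvature is at most $T|M'(x)|$, squeezed between $T\lambda_k(1\pm O(\epsilon))$. Applying Theorem \ref{equidistribution} on each subarc and summing yields $\#\mathcal{A}_{M(\mathcal{I}),T}=c_{\PP,C_0}T\sum_k\lambda_k l(\mathcal{I}_k)(1+O(\epsilon))+o(T)$, and the Riemann sum $\sum_k\lambda_k l(\mathcal{I}_k)$ tends to $\int_\mathcal{I}|M'|\,dl=l(M(\mathcal{I}))$ as the partition is refined and $\epsilon\to 0$.

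For part (ii), a consequence of (i) is that the mean gap $\langle d_T\rangle\sim 1/(c_{\PP,C_0}T)$ is the same constant on either side of $M$. Within a subarc $\mathcal{I}_k$, a chord between two tangencies satisfies $|M(x)-M(x')|=\lambda_k|x-x'|(1+O(\epsilon))$, and the curvature cutoff $T$ on the $M$-side corresponds to cutoff $T\lambda_k(1+O(\epsilon))$ on the $\PP$-side. Hence the normalized-gap condition on the $M$-side at scale $T$ matches, up to $(1+O(\epsilon))$, the condition on $\PP$ restricted to $\mathcal{I}_k$ at scale $T\lambda_k$; neighbouring pairs also correspond under $M^{-1}$, with $O(1)$ discrepancies per subarc from pairs whose true neighbour lies in $\mathcal{I}_{k\pm 1}$. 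Summing over $k$, using Theorem \ref{gapdistribution} on each $\mathcal{I}_k$ and (i) for the denominator, both numerator and denominator in $F^{i,j}_{T,M(\mathcal{I})}(s)$ produce $c_{\PP,C_0}T\sum_k\lambda_k l(\mathcal{I}_k)$ in their main terms, and the ratio tends to $F^{i,j}(s)$. Part (iii) then follows by summing \eqref{conformal inv of Fij} over all unordered pairs $(C_i,C_j)$ of distinct circles in $\mathcal{K}_0$, with the factor of $2$ for disjoint pairs as in Theorem \ref{thm:formula for F}.

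The main technical difficulty is bookkeeping two commuting limits: the $O(1)$ boundary discrepancies per subarc (from pairs straddling $\partial\mathcal{I}_k$ or neighbours changing identity upon restriction), and the $(1+O(\epsilon))$ multiplicative errors from the dilation approximation. Taking $T\to\infty$ first with the partition fixed makes the former $o(T)$ in absolute count, hence negligible after normalization; sending $\epsilon\to 0$ afterwards, continuity of $F^{i,j}$ absorbs the multiplicative errors.
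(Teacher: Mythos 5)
Your overall architecture is the same as the paper's: approximate $M$ by dilations on a fine partition of $\mathcal{I}$, use the curvature scaling $\kappa\mapsto\kappa/|M'(\alpha)|$ (the paper records this with an additive $O_{M,\mathcal{I}}(1)$ correction, which is harmless), sandwich the counts between the two cutoffs $T\lambda_k(1\pm O(\epsilon))$, take $T\to\infty$ first and the partition limit second, and invoke continuity of $F^{i,j}$. Parts (i) and (iii) as you present them are essentially the paper's argument.

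The gap is in your error accounting in part (ii). You classify the failures of the neighbour correspondence as ``$O(1)$ discrepancies per subarc'', hence $o(T)$ once the partition is fixed and $T\to\infty$. That is true only for pairs straddling $\partial\mathcal{I}_k$. The dominant source of discrepancy is different: $M^{-1}\bigl(\mathcal{A}_{M(\mathcal{I}_k),T}\bigr)$ is \emph{not} equal to $\mathcal{A}_{\mathcal{I}_k,\lambda_k T}$; it is only squeezed between $\mathcal{A}_{\mathcal{I}_k,\lambda_k T(1-O(\epsilon))}$ and $\mathcal{A}_{\mathcal{I}_k,\lambda_k T(1+O(\epsilon))}$, and by Theorem \ref{equidistribution} the symmetric difference of the two sets being compared has cardinality of order $\epsilon\, l(\mathcal{I}_k)\lambda_k T$. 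For a fixed partition this grows linearly in $T$ — it is a positive proportion of all tangencies, not $o(T)$. Each point of the symmetric difference creates or destroys neighbouring pairs (a point present on one side splits a gap in two, a point absent merges two gaps), so the number of pairs counted differently on the two sides is likewise $O(\epsilon T)$, and your step ``taking $T\to\infty$ first with the partition fixed makes the former $o(T)$'' fails for exactly the quantity that matters most. The repair is what the paper does: partition into $u$ equal pieces (so $\epsilon\asymp 1/u$ and $l(\mathcal{I}_k)\asymp 1/u$), bound the per-subarc symmetric difference by $O(T/u^2)$ via the counting theorem, introduce the exceptional sets $\mathcal{A}^0$ of tangencies whose neighbour relation is not preserved, and carry the resulting $O(1/u)$ \emph{relative} error through the $T\to\infty$ limit — it survives that limit — killing it only in the subsequent $u\to\infty$ limit together with the multiplicative errors, using continuity of $F^{p,q}$. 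With that single correction your bookkeeping closes and the proof coincides with the paper's.
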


 \begin{proof}
We use the notation $``O"$ and $``o"$ throughout the proof related to the asymptotic growth as $T\rightarrow\infty$.
 Without further explanation, all the implied constants depend at most on $M$ and $\mathcal{I}$.

i)  We first show that for a circle $C$ with tangency $\alpha$ on $\mathcal{I}$,
 the curvature of $M(C)$ satisfies 
\al{\label{curvaturetransform} \kappa(M(C))=\frac{\kappa(C)}{|M^{'}(\alpha)|}+O_{M,\mathcal{I}} (1) \;.}
 Choose a parametrization $z(t)=x(t)+\bd{i}y(t)$ for $C$ in a neighborhood of $\alpha$, with $z(0)=\alpha$.  
Then the curvature is given by 
 $$\kappa(C)=\frac{\rvert\Im(\overline{z^{'}(0)}z^{''}(0))\rvert}{|z^{'}(0)|^3}$$
 Therefore, a direct computation shows that 
 $$\kappa(M(C))=\frac{\rvert\Im(\overline{(Mz)^{'}(0)}(Mz)^{''}(0))\rvert}{|(Mz)^{'}(0)|^3}=\frac{\kappa(C)}{|M^{'}(\alpha)|}+O\left(\left\rvert\frac{M^{''}(\alpha)}{M^{'}(\alpha)^2}\right\rvert\right)$$
 which gives  \eqref{curvaturetransform}.\\

 We divide the arc $M(\mathcal{I})$ on $M(C_0)$ into $u$ equal pieces $\mathcal{J}_1,\hdots, \mathcal{J}_u$ with their preimages $\mathcal{I}_1,\hdots,\mathcal{I}_u$ on
 $C_0$. We pick a point $\alpha_i$ from the interval $\mathcal{I}_i$ for each
 $i$, and  let  $\beta_i=M(\alpha_i)$.
 We have
\begin{equation}\label{intervalapproximate}
l(\mathcal{I}_i)=\frac{l(\mathcal{J}_i)}{|M^{'}(\alpha_i)|}\left(1+O\left(\frac{1}{u}\right)\right).
\end{equation}
 For any circle $C$ on $\mathcal{J}_i$, we have
 $$
\kappa(M^{-1}(C))=|M^{'}(\alpha_i)|\kappa(C)\left(1+O\left(\frac{1}{u}\right)\right).
$$

  Therefore, $\exists c_1,c_2>0$ such that
  \al{\label{approximatecircle}
\mathcal{A}_{{\mathcal{I}_i},T|M^{'}(\alpha_i)|\left(1-\frac{c_1}{u}\right)}
  \subset M^{-1}(\mathcal{A}_{\mathcal{J}_i,T})\subset\mathcal{A}_{{\mathcal{I}_i},T|M^{'}(\alpha_i)|\left(1+\frac{c_2}{u}\right)}}
Dividing the above expression by $T$, and letting $T\to \infty$, we
obtain
\begin{multline*}
c_{\PP,C_0}l(\mathcal{I}_i)|M^{'}(\alpha_i)|(1-\frac{c_1}{u})\leq
\liminf_{T\rightarrow\infty}\frac{\#\mathcal{A}_{\mathcal{J}_i,T}}{T} \\
\leq
\limsup_{T\rightarrow\infty}\frac{\#\mathcal{A}_{\mathcal{J}_i,T}}{T}\leq
c_{\PP,C_0}l(\mathcal{I}_i)|M^{'}(\alpha_i)|(1+\frac{c_2}{u})
\end{multline*}

Replace $l(\mathcal{I}_i)$ by \eqref{intervalapproximate}, sum over all $i$,
and let $u\to \infty$, we obtain
$$
\lim_{T\rightarrow\infty}\frac{\#\mathcal{A}_{M(\mathcal{I}),T}}{T}=c_{\PP,C_0}l(M(\mathcal{I})).
$$
Thus we prove the conformal invariance of the density of tangencies.
\\

ii) Now we show that for each pair of circles
$C_p,C_q\in\mathcal{K}_0$,
$$\lim_{T\rightarrow\infty}F_{M(\mathcal{I}),T}^{p,q}(s)=F^{p,q}(s)$$

Let $\{y_{T}^{j,i}\}$ be an ordered sequence of
$\mathcal{A}_{M(\mathcal{I}_i),T}$, and
$\{x_{T|M^{'}(\alpha_i)|}^{k,i}\}$ be the sequence of
$\mathcal{A}_{\mathcal{I},|M^{'}(\alpha_i)|T}$ ordered in a way such that $M(\{x_{T|M^{'}(\alpha_i)|}^{k,i}\})$ has the same orientation as $\{y_{T}^{j,i}\}$.   Let
$\mathcal{A}_{\mathcal{I}_i,|M^{'}(\alpha_i)|T}^{p,q}$ be the subset
of $\mathcal{A}_{\mathcal{I}_i,|M^{'}(\alpha_i)|T}$ consisting of
those $x_{|M^{'}(\alpha_i)|T}^{j,i}$ such that
$(x_{|M^{'}(\alpha_i)|T}^{j,i},
x_{|M^{'}(\alpha_i)|T}^{j+1,i})\in\Gamma(\alpha_p,\alpha_q)$.  Let
$\mathcal{A}_{M(\mathcal{I}_i),T}^{p,q}$ be the subset of
$\mathcal{A}_{M(\mathcal{I}_i),T}$ consisting of those $y_{T}^{j,i}$
such that $(y_{T}^{j,i}, y_{T}^{j+1,i})\in M\Gamma(\alpha_p,\alpha_q)$.

Then
$$F_{M(\mathcal{I}),T}^{p,q}(s)=\frac{\sum_{i=1}^u\#\{y_T^{j,i}\in\mathcal{A}_{M(\mathcal{I}_i),T}^{p,q} :\frac{d(y_T^{j,i},y_T^{j+1,i})}{\frac{l(M(\mathcal{I}))}{\#\mathcal{A}_{M(\mathcal{I}),T}}}\leq s\}}{\sum_{i=1}^u\#\mathcal{A}_{M(\mathcal{I}_i),T}}$$

From \eqref{approximatecircle}, the symmetric difference
$$\#\left(M^{-1}(\mathcal{A}_{M(\mathcal{I}_i),T})\bt\mathcal{A}_{\mathcal{I}_i,|M^{'}(\alpha)|T}\right)=O\left(\frac{T}{u^2}\right)$$
Therefore,  if we let
$$\mathcal{A}_{M(\mathcal{I}_i),T}^0=\{y_{T}^{j,i}:M^{-1}(y_{T}^{j,i})
\text{ and }M^{-1}(y_{T}^{j+1,i}) \text{ are not neighbours in }
\mathcal{A}_{\mathcal{I}_i,|M^{'}(\alpha)|T} \}$$ and
 $$\mathcal{A}_{\mathcal{I}_i,|M^{'}(\alpha_i)|T}^0=\{x_{T}^{k,i}:M(x_{T}^{j,i}) \text{ and }M(x_{|M^{'}(\alpha_i)|T}^{k+1,i}) \text{ are not neighbours in } \mathcal{A}_{\mathcal{I}_i,T} \},$$
then
$$\#\mathcal{A}_{M(\mathcal{I}_i),T}^0=O\left(\frac{T}{u^2}\right)$$
and $$\#
\mathcal{A}_{\mathcal{I}_i,|M^{'}(\alpha_i)|T}^0=O\left(\frac{T}{u^2}\right).$$
Let $\{{y}_T^{j_v,i}\}$ and $\{{x}_{M^{'}(\alpha_i)T}^{k_w,i}\}$ be
the ordered sequence from
$\mathcal{A}_{M(\mathcal{I}_i),T} \backslash \mathcal{A}_{M(\mathcal{I}_i),T}^0$
and
$\mathcal{A}_{\mathcal{I}_i,|M^{'}(\alpha_i)|T} \backslash  \mathcal{A}_{\mathcal{I}_i,|M^{'}(\alpha_i)|T}^0$
respectively.  Then $M$ is a bijection on these two sequences.
Let's say $M(x_{|M^{'}(\alpha_i)|T}^{k_w,i})=y_{T}^{j_v,i}$.
Therefore,
\begin{multline*}
\#\left\{y_T^{j_v,i}\in\mathcal{A}_{M(\mathcal{I}_i),T}^{p,q}\backslash \mathcal{A}_{M(\mathcal{I}_i),T}^0 :\frac{d(y_T^{j_v,i},y_T^{j_v+1,i})}{\frac{l(M(\mathcal{I}_i))}{\#\mathcal{A}_{M(\mathcal{I}_i),T}}}\leq
s\right\} \\=
\#\left\{y_T^{j_v,i}\in\mathcal{A}_{M(\mathcal{I}_i),T}^{p,q}: \frac{d(y_T^{j_v,i},y_T^{j_v+1,i})}{\frac{l(M(\mathcal{I}_i))}{\#\mathcal{A}_{M(\mathcal{I}_i),T}}}\leq
s\right\}+O\left(\frac{T}{u^2}\right)
\end{multline*} 
and 
\begin{multline*}
\#\left\{x_T^{k_w,i}\in\mathcal{A}_{\mathcal{I}_i,|M^{'}(\alpha_i)|T}^{p,q}-\backslash \mathcal{A}_{\mathcal{I}_i,|M^{'}(\alpha_i)|T}^0: \frac{d(x_T^{k_w,i},y_T^{k_w+1,i})}{\frac{l(M(\mathcal{I}_i))}{\#\mathcal{A}_{\mathcal{I}_i,|M^{'}(\alpha_i)|T}}}\leq
s\right\} \\ =
\#\left\{y_T^{j_v,i}\in\mathcal{A}_{M(\mathcal{I}_i),T}^{p,q}: \frac{d(y_T^{j_v,i},y_T^{j_v+1,i})}{\frac{l(M(\mathcal{I}_i))}{\#\mathcal{A}_{M(\mathcal{I}_i),T}}}\leq
s\right\}+O\left(\frac{T}{u^2}\right)
\end{multline*}

Now
\al{\label{415}d(y_T^{j_v,i},y_T^{j_+1,i})=|M^{'}(\alpha_i)|d(x_{|M^{'}(\alpha_i)|T}^{k_w,i},x_{|M^{'}(\alpha_i)|T}^{k_w+1,i})\left(1+O\left(\frac{1}{u}\right)\right),}
\al{\label{416}\frac{l(M(\mathcal{I}_i))}{\#\mathcal{A}_{M(\mathcal{I}_i),T}}=\frac{1}{c_{\PP,C_0}T}\left(1+o(1)\right)}
\al{\label{417}\frac{l({I})}{\#\mathcal{A}_{\mathcal{I},|M^{'}(\alpha_i)|T}}=\frac{1}{c_{\PP,C_0}|M^{'}(\alpha_i)|T}\left(1+o(1)\right)}

Combining \eqref{415}, \eqref{416}, \eqref{417},  we see that
$\exists c_3,c_4$ which only depend on $M$ and $\mathcal{I}$, and
for any arbitrary small number $\epsilon_1,\epsilon_2$, $\exists
T(\epsilon_1,\epsilon_2)$, such that when
$T>T(\epsilon_1,\epsilon_2)$,

\begin{multline*}
\# \left\{x_{|M^{'}(\alpha)|T}^{k_w,i}\in\mathcal{A}_{\mathcal{I}_i,|M^{'}(\alpha_i)|T}^{p,q}:\frac{d(x_{|M^{'}(\alpha)|T}^{k_w,i},x_{|M^{'}(\alpha)|T}^{k_w+1,i})}{\frac{l(\mathcal{I})}{\#\mathcal{A}_{\mathcal{I},|M^{'}(\alpha_i)|T}}}\leq s(1-\epsilon_1)\left(1-\frac{c_3}{u}\right)\right\}\nonumber\\\nonumber
\leq\# \left\{y_T^{j_v,i}\in\mathcal{A}_{M(\mathcal{I}_i),T}^{p,q}:\frac{d(y_T^{j_v,i},y_T^{j_v+1,i})}{\frac{l(M(\mathcal{I}))}{\#\mathcal{A}_{M(\mathcal{I}),T}}}\leq s\right\} 
\\
\leq\# \left\{x_{|M^{'}(\alpha)|T}^{k_w,i}\in\mathcal{A}_{\mathcal{I}_i,|M^{'}(\alpha_i)|T}^{p,q}:\frac{d(x_{|M^{'}(\alpha)|T}^{k_w,i},x_{|M^{'}(\alpha)|T}^{k_w+1,i})}{\frac{l(\mathcal{I})}{\#\mathcal{A}_{\mathcal{I},|M^{'}(\alpha_i)|T}}}\leq
s(1+\epsilon_2)\left(1+\frac{c_4}{u}\right)\right\}
\end{multline*}

We also have
\begin{equation*}
\#\mathcal{A}_{M(\mathcal{I}_i),T}=\#\mathcal{A}_{\mathcal{I}_i,|M^{'}(\alpha_i)|T}\left(1+O\left(\frac{1}{u}\right)\right)
\end{equation*}
Therefore, $\exists c_5,c_6>0$, 
\begin{multline*}
 F_{\mathcal{I},|\mathcal{M}^{'}(\alpha_i)|T}^{p,q}\left(s(1-\epsilon_1)\left(1-\frac{c_3}{u}\right)\right)-\frac{c_5}{u}\\
\leq
\frac{\#\{y_T^{j,i}\in\mathcal{A}_{M(\mathcal{I}_i),T}^{p,q}: \frac{d(y_T^{j,i},y_T^{j+1,i})}{\frac{l(M(\mathcal{I}_i))}{\#\mathcal{A}_{M(\mathcal{I}_i),T}}}\leq
s\}}{\#\mathcal{A}_{M(\mathcal{I}_i),T}}\\
\leq
F_{\mathcal{I},|\mathcal{M}^{'}(\alpha_i)|T}^{p,q}\left(s(1+\epsilon_2)\left(1+\frac{c_4}{u}\right)\right)+\frac{c_6}{u}
\end{multline*}
As a result, as $T\to \infty$, we obtain
\begin{multline*} 
F^{p,q}\left(s(1-\epsilon_1)\left(1-\frac{c_3}{u}\right)\right)-\frac{c_5}{u}\leq
\liminf_{T\rightarrow\infty} F_{M(\mathcal{I}_i),T}^{p,q}(s) \\
\leq
\limsup_{T\rightarrow\infty} F_{M(\mathcal{I}_i),T}^{p,q}(s) \leq
F^{p,q}\left(s(1+\epsilon_2)\left(1+\frac{c_4}{u}\right)\right)+\frac{c_6}{u}
\end{multline*}

Since $F_{M(\mathcal{I}),T}^{p,q}(s)$ is a convex combination of
$F_{M(\mathcal{I}_i)}^{p,q}(s)$, if we let $u\rightarrow\infty$,
$\epsilon_1,\epsilon_2\rightarrow0$ and use the continuity of $F^{p,q}$ (as follows from \eqref{pf of cont}) ,
we obtain
$$\lim_{T\rightarrow \infty}F_{M(\mathcal{I}),T}^{p,q}(s)=F^{p,q}(s).$$
\end{proof}

\subsection{Proof of Theorems~\ref{equidistribution} and \ref{gapdistribution}}
Any bounded circle packing can be written as $M(\PP)$ for some
generalized Ford packing $\PP$. The only issue we need to deal with
is $\infty$, if an arc $\mathcal{J}$ in $M(C_0)$ contain $M\infty$,
the argument in the proof of Theorem \ref{transferprinciple} does
not directly apply.  However, this is easily solved by precomposing $M$
with some $X\in\Gamma$, so
that  $(MX)^{-1}(\mathcal{J})$ is a bounded arc, and $b<|(MX)^{'}(\alpha)|<B$ for some $b,B>0$ and any $\alpha\in (MX)^{-1}(\mathcal{J})$.  Then we can apply Theorem
\ref{transferprinciple}.

\section{Examples}\label{sec:examples}

We compute the gap distribution function $F(s)$ for three examples: The classical Apollonian packing, where we start with a configuration of four mutually tangent circles, whose tangency graph is the tetrahedron; the Apollonian-3 packing  in which one starts with three mutually tangent circles, and in either of the curvilinear triangles formed by three mutually tangent
circles we pack three more circles, forming a sextuple whose tangency graph is the octahedron; and the Apollonian-9 packing  introduced in \cite{BGGM} where the initial configuration consists of 12 circles, with the icosahedron as its tangency graph. 
 
It would be interesting to apply the method to compute the gap distribution for other examples, such as the ball-bearing configurations of \cite{GM}.

\subsection{The computational procedure} 
We   explain the procedure used in the computations:   
According to Theorem~\ref{thm:formula for F}, the gap distribution $F(s)$ is given in terms of a sum of functions $F^{i,j}(s)$ over (unordered) pairs of distinct circles $C_i, C_j\in \mathcal K_0$ of areas of regions  $\Omega^{i,j}(s)=\Omega^{i,j}_1(s)$ in the $(c,d)$ plane, explicitly described in ~\S~\ref{sec:determining}.  
So for instance in the case of the Apollonian-9 packing, there are $\binom{5}{2} = 10$ such pairs. We are able to cut down on the computational effort involved by using conformal invariance, once we note that the packings we study below enjoy more symmetries than a generic packing.
 According to Theorem~\ref{transferprinciple}, if there is some $M\in SL(2,\CC)$ which preserves $\PP_0$ and $C_0$ (note that $F(S)$ only depends on $\PP_0$, not on all of $\PP$), and takes the pair of circles $(C_i,C_i)$ in $\mathcal{K}_0$ to another pair $(MC_i,MC_j) =(C_{k},C_{\ell})$ in $\mathcal{K}_0$, then $F^{i,j}(s) = F^{k,\ell}(s)$. 
 
 In each of these packings, there exists a group $\tilde{\Gamma}$ which is larger than $\Gamma$ that also acts on this given packing.     In each of the three examples presented, the bigger group $\tilde \Gamma$ is one of the Hecke triangle groups $\Hecke_q$, which is the group of
M\"obius transformations generated by the inversion $S:z\mapsto
-1/z$ and the translation $T_q:z\mapsto  z+\lambda_q$, where
$\lambda_q = 2\cos \frac \pi q$:
$$ 
\mathbb{G}_q=\left\langle T_q=\mat{1&2\cos\frac{\pi}{q}\\0&1},\quad  S=\mat{0&-1\\1&0}\right\rangle
$$
For $q=3$ we recover the modular group: $\Hecke_3 = PSL(2,\ZZ)$. 
For $q\neq 3,4,6$ the groups $\Hecke_q$ are non-arithmetic.

In all three cases, we show that $\Gamma \trianglelefteq  \Hecke_q$ is a normal subgroup of the appropriate $\Hecke_q$. 
In the first example of the classical Ford packing, 
$\Gamma\triangleleft\Hecke_3=PSL(2,\ZZ)$ which acts transitively on the Ford circles $\PP_0$.  
In the second example, of the Apollonian-3 packing, $\Gamma\triangleleft \Hecke_4$. 
In the third example, the Apollonian-9 packing, $\Gamma\triangleleft \Hecke_5$.

  It is a simple check that in each of these cases, the generators $\mathbb{G}_q$ send circles in $\KK_0$ to some circles in $\PP_0$.  So if we show $\mathbb{G}_q$ normalizes $\Gamma$, then it follows that $\mathbb{G}_q$ permutes the circles in  $\PP_0=\Gamma\mathcal{K}_0$.   
   We further show that $\Hecke_q$ acts transitively on pairs of tangent circles in $\KK_0$  for  $q=3,4,5$, and on pairs of disjoint circles  for $q=4,5$.   Hence by conformal invariance of the components $F^{i,j}(s)$, this gives an expression for $F(s)$ as a sum of areas of two regions for $q=4,5$  (only one for $q=3$). 
  
  These regions are in turn expressed as a finite union of certain subregions $Z_k(s)$. For each subregion $Z_k\subseteq \Omega^{i,j}$, the first two equations give the conditions $\kappa(C_i)\leq1,\kappa(C_j)\leq1$, and the other equations except the last one gives the condition $\kappa(C)>1$, where $C$ exhausts circles in $\mathcal{K}_0$ such that $\gamma(C)$ is in between $\gamma(C_i)$ and $\gamma(C_j)$. The last quadratic equation corresponds to conditions  \eqref{bounding3} or \eqref{bounding4}, capturing relative gap information.

  The areas of these regions can be explicitly computed in elementary terms, but the resulting formulae are too long to record. Instead we display plots of  the density of the normalized gaps (the derivative of $F$).
\\

\subsection{Classical Ford Circles.}\label{sec:app1}
We start with a configuration of four mutually tangent circles in Figure~\ref{apford}: The base circle is $C_0=\RR$, and the other circles in the initial configuration are
\begin{figure}[h]
\begin{center}
\includegraphics[width=8cm]
{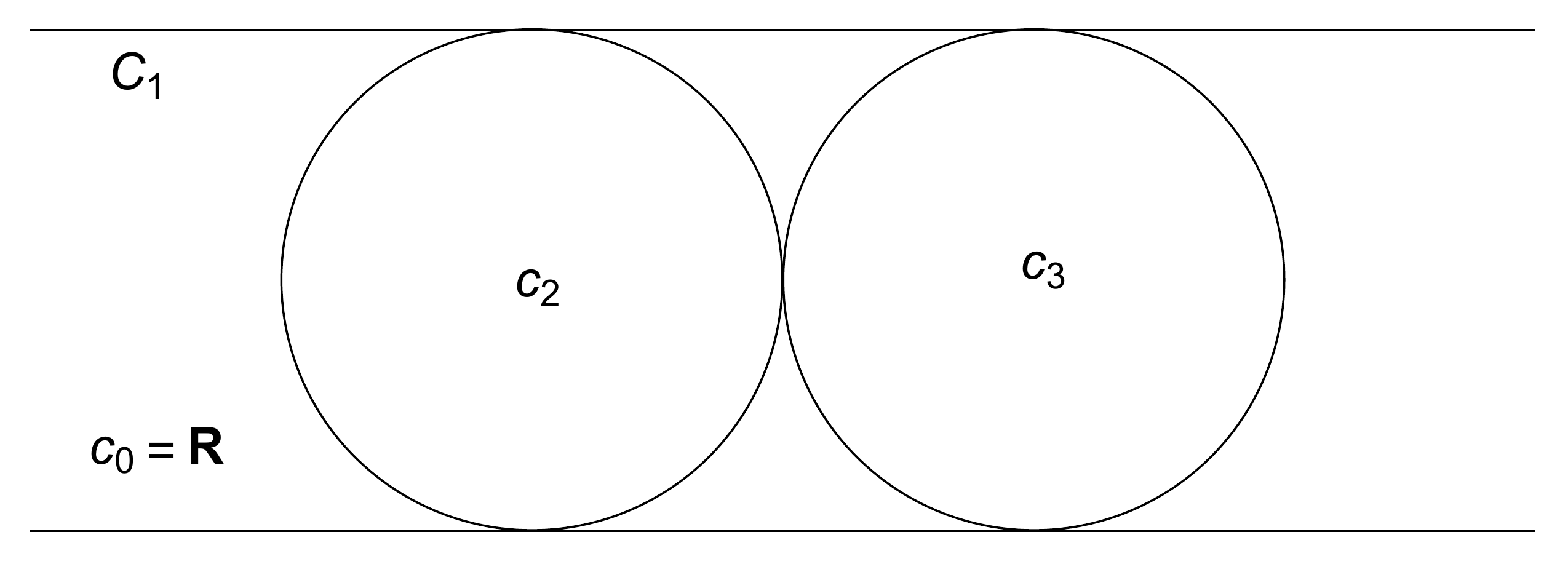}
 \caption{Ford Configuration of Apollonian type}
 \label{apford}
\end{center}
\end{figure}
$$C_1=\RR+\bd{i}, \quad C_2=C(\frac{\bd{i}}{2},\frac{1}{2}), 
\quad C_3=C(1+\frac{\ii}{2},\frac{1}{2})\;.
$$ 

The resulting packing $\PP$ is a classical Apollonian packing, generated by inscribing a unique circle   into every curvilinear triangle
produced by the four mutually tangent circles and repeating. The circles $\PP_0$ tangent to the real line are precisely the classical Ford circles $C(\frac pq, \frac 1{2q^2})$, 
having as points of tangency the Farey sequence $\{p/q: q>0, \gcd(p,q)=1\}$. The gap distribution of the Farey sequence was found by Hall \cite{Hall}, 
and we carry out the computation here as a warm-up to illustrate our method. 

The value of $D$ is $3$ according to \eqref{definitiondford}, see Figure~\ref{fig:Dregions}, and thus $c_{\PP,C_0} = D/2\pi^2(h-2) = 3/2\pi^2$.  

 The group $\Gamma_0$ is generated by the three reflections 
$$z\rightarrow  \mat{-1&0\\0&1}\bar{z}, \quad  z\rightarrow  \mat{-1&2\\0&1}\bar{z},\quad z\rightarrow  \mat{1&0\\2&-1}\bar{z}
$$ 
so $\Gamma$ generated by $\mat{1&2\\0&1}$ and $\mat{1&0\\2&1}$, which are the generators of the principal congruence subgroup 
$$
\Gamma(2)=\{\gamma=\mat{a&b\\c&d}\in PSL(2,\ZZ):\gamma=I\bmod 2\}
$$ 
which is a normal subgroup of $PSL(2,\ZZ) = \Hecke_3$.    
Any pair of circles from $\{C_1,C_2,C_3\}$ are {\it conformally equivalent}, in the sense that there exists some $\gamma\in \Hecke_3=PSL(2,\ZZ)$ which preserves the packing and maps $C_1$, $C_2$ to this given pair of circles: Indeed,  
$$T_3(C_1,C_2)=(C_1,C_3), \quad T_3S(C_1,C_3)=(C_3,C_2)$$
Therefore we have  
 $F^{1,3}(s)=F^{2,3}(s)=F^{1,2}(s)$. 
and so by Theorem~\ref{thm:formula for F}, 
$$ F(s) = 3F^{1,2}(s) = \frac 23 \cdot 2 m(\Omega^{1,2}(s))$$
and it remains to compute the area of $\Omega^{1,2}(s)$.

We compute the region $\Omega_1^{1,2}(s)$ using \eqref{boundinginfty}, \eqref{bounding5infty}, \eqref{bounding2infty}.  
A direct computation gives 
$$\kappa(\gamma(C_1)=2c^2, \quad \kappa(\gamma(C_2))=2d^2,\quad  \kappa(\gamma(C_3))=2(c+d)^2\;.
$$
The distance between $\gamma\alpha_1=\gamma\infty$ and $\gamma\alpha_2=\gamma0$ is $1/|cd|$.  When $d>0$, $C_3$ will be mapped by $\gamma$ to some circle in between $\gamma(C_1)$ and $\gamma(C_2)$ by Observation~\ref{inbetween}, and the corresponding region is 
$$
Z_1(s)=\{(c,d): 0<c\leq \frac{1}{\sqrt{2}},\; 0<d\leq \frac{1}{\sqrt{2}},\; c+d\geq\frac{1}{\sqrt{2}}, \; cd\geq \frac{3}{2\pi^2s} \} \;.
$$

When $d<0$, then $S_{1,2}(C_3)=C(-1+\ii/2,1/2)$ will be mapped in between $\gamma(C_1)$ and $\gamma(C_2)$ by $\gamma$, and the relevant region is
$$
Z_2(s)=\{(c,d): 0<c\leq \frac{1}{\sqrt{2}},\; 0>d\geq -\frac{1}{\sqrt{2}},\; 
c-d\geq\frac{1}{\sqrt{2}},\; cd\leq -\frac{3}{2\pi^2s} \} \;.
$$

Note that $Z_1(s)$ and $Z_2(s)$ are symmetric about the $c$-axis, so we only need to calculate one area.  
 Therefore, by Theorem~\ref{thm:formula for F}, 
$$
F(s)=\frac{2}{3} \cdot 3m(\Omega^{1,2}(s)) = 4m(Z_1(s))\;.
$$
The density function $P(s)$  of the normalized gaps  (the derivative of $F(s)$) is given in Figure~\ref{fig:classicapp3density}.
\begin{figure}[h]
\begin{center}
\includegraphics[width=8cm]
{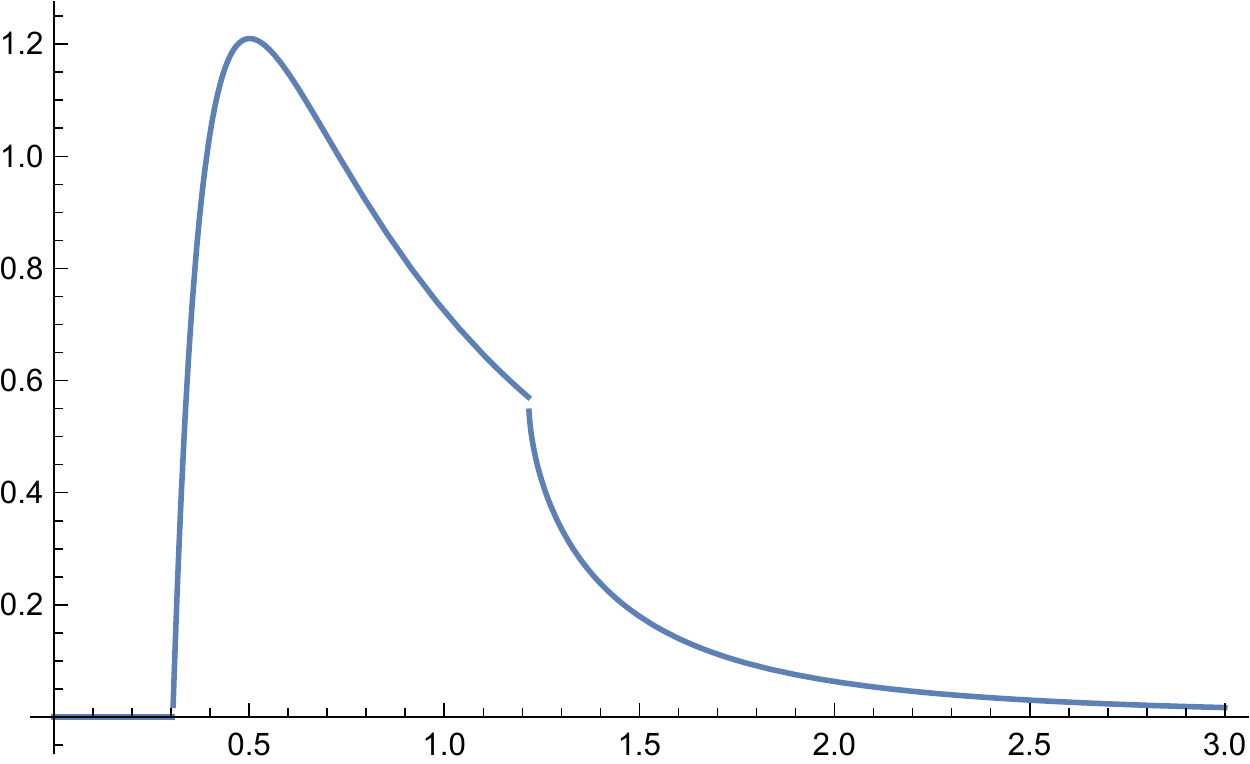}
 \caption{$P(s)$ for the classical Apollonian packing.}
 \label{fig:classicapp3density}
\end{center}
\end{figure}

\subsection{ The Apollonian-$3$ Packing} \label{sec:app3}
Our next example was discovered by Guettler and Mallows \cite{GM}, in which one starts with three mutually tangent circles, and in either of the curvilinear triangles formed by three mutually tangent
circles we pack three more circles, forming a sextuple  $\mathcal K$, see Figure~\ref{ap3ford}.
The base circle is $C_0=\RR$ and the circles of $\mathcal K$ tangent to it are  
$$
C_1=\RR+\bd{i}, \quad C_2=C(\frac{\bd{i}}{2},\frac{1}{2}), \quad C_3=C(\frac{\sqrt{2}}{2}+\frac{1}{4}\ii,\frac{1}{4}), \quad 
C_4=C(\sqrt{2}+\frac{1}{2}\ii,\frac{1}{2})
$$ 
\begin{figure}[h]
\begin{center}
\includegraphics[width=8cm]
{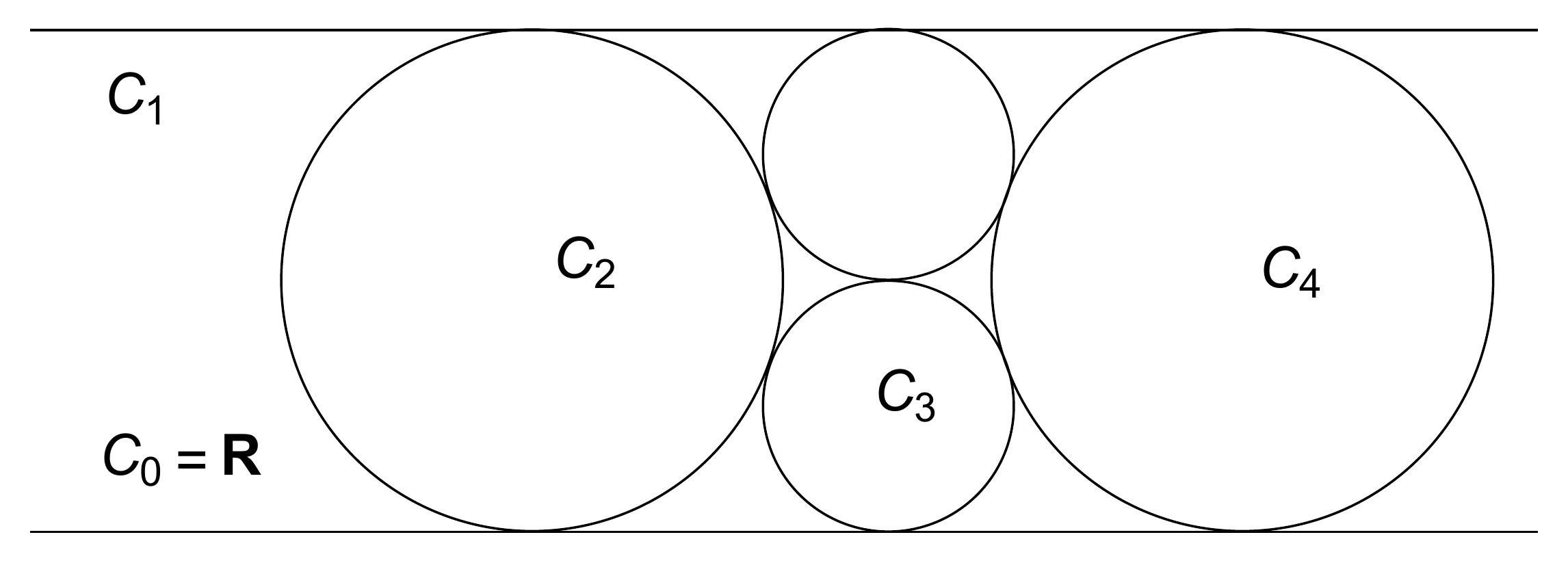}
 \caption{Ford Configuration of Apollonian-3 type}
 \label{ap3ford}
\end{center}
\end{figure}
In this case, $D = 4\sqrt{2}$ and $c_{\PP,C_0}= D/2\pi^2(h-2)=\sqrt{2}/\pi^2$. 

The group $\Gamma_0$ is generated by 
$$z\rightarrow-\bar{z}, \quad z\rightarrow\mat{1&0\\2\sqrt{2}&-1}\bar{z}, \quad 
z\rightarrow\mat{3&-2\sqrt{2}\\2\sqrt{2}&-3}\bar{z}, \quad z\rightarrow\mat{-1&2\sqrt{2}\\0&1}\bar{z}
$$  
Therefore, 
$$
\Gamma=\left\langle\mat{1&2\sqrt{2}\\0&1},\mat{1&0\\2\sqrt{2}&1},\mat{3&-2\sqrt{2}\\-2\sqrt{2}&3}\right\rangle
$$ 

The Hecke group $\Hecke_4$ is generated by $S = \mat{0&-1\\1&0}$ and $T_4 =\mat{1&\sqrt{2}\\0&1}$. It contains $\Gamma$ because 
$$\mat{3&-2\sqrt{2}\\-2\sqrt{2}&3} = T_4^{-1}ST_4^{-2}ST_4^{-1}.
$$

To see that $\Gamma$ is normal in $\Hecke_4$, we check that the conjugates of the generators of $\Gamma$ by the generators $S$, $T_4$ of $\Hecke_4$ still lie in $\Gamma$:   
$$S\mat{1&2\sqrt{2}\\0&1}S^{-1}=\mat{1&0\\2\sqrt{2}&1}^{-1},\quad 
S\mat{1&0\\2\sqrt{2}&1}S^{-1}=\mat{1&2\sqrt{2}\\0&1}^{-1}$$
$$S\mat{3&-2\sqrt{2}\\-2\sqrt{2}&3}S^{-1}=\mat{3&-2\sqrt{2}\\-2\sqrt{2}&3}^{-1}$$
$$T_4\mat{1&2\sqrt{2}\\0&1}T_4^{-1}=\mat{1&2\sqrt{2}\\0&1}$$
$$T_4\mat{1&0\\2\sqrt{2}&1}T_4^{-1}=-\mat{1&2\sqrt{2}\\0&1}\cdot\mat{3&-2\sqrt{2}\\-2\sqrt{2}&3}$$
$$T_4\mat{3&-2\sqrt{2}\\-2\sqrt{2}&3}T_4^{-1}=-\mat{1&0\\2\sqrt{2}&1}\cdot\mat{1&-2\sqrt{2}\\0&1}\;.
$$

There are $6$ (unordered) pairs of circles from $\mathcal{K}_0$:  Four pairs of tangent circles $(C_1,C_2)$, $(C_2,C_3)$, $(C_3,C_4)$, $(C_4,C_1)$,  and two pairs of disjoint circles $(C_1,C_3), (C_2,C_4)$. Each of these cases gives one equivalence class under the action of  $\Hecke_4$:  
$$
T_4(C_1,C_2)=(C_1,C_4), \quad T_4S(C_1,C_4)=(C_3,C_4), \quad 
T_4S(C_3,C_4)=(C_2,C_3),
$$
and
$$
 T_4S(C_1,C_3)=(C_2,C_4)
$$
Hence from formula  \eqref{whatisf} 
$$F(s)=\frac{2}{4\sqrt{2}}(4m(\Omega^{1,2}(s))+4m(\Omega^{1,3}(s)))=\sqrt{2}(m(\Omega^{1,2}(s))+m(\Omega_1^{1,3}(s))$$

 We compute the areas $\Omega_{1}^{1,2}$ and $\Omega_{1}^{1,3}$. Note that if $\gamma=\begin{pmatrix}a&b\\c&d\end{pmatrix}$ then  
\begin{align*}
 \kappa(\gamma(C_1))&=2c^2 , 
 &\kappa(\gamma(C_2)) &=2d^2,\\
 \kappa(\gamma(C_3))&=4(\frac{\sqrt{2}}{2}c+d)^2 ,    
  &\kappa(\gamma(C_4))&=2(\sqrt{2}c+d)^2 \;.
\end{align*}

First we consider $\Omega_{1}^{1,2}$:  The distance between $\gamma(\alpha_1)=\gamma \cdot\infty$ and  $\gamma(\alpha_2)=\gamma \cdot 0$ is $1/|cd|$.  Therefore, we need to consider the cases $d>0$ and $d<0$ separately.  If $d>0$, then the circles $C_3$ and $C_4$ will be mapped by $\gamma$ to some circles in between $\gamma(C_1)$ and $\gamma(C_2)$.  The corresponding region $Z_1$ is
\begin{multline*}
Z_1(s)=\{(c,d):0< \sqrt{2}c\leq1,\; 0< \sqrt{2}d\leq 1, \; \sqrt{2}c+2d\geq 1,\;
\\
 2c+\sqrt{2}d\geq1, \; cd\geq\frac{\sqrt{2}}{\pi^2s}\}\;.
\end{multline*}

If $d<0$, then the circles 
$$S_{1,2}C_3=C(-\frac{\sqrt{2}}{2}+\frac{\ii}{4},\frac{1}{4}) \;\mbox{ and }\; S_{1,2}C_4=C(-\sqrt{2}+\frac{\ii}{2},\frac{1}{2})$$ 
will be mapped in between  $\gamma(C_1)$ and $\gamma(C_2)$.  
The corresponding region $Z_2$ is 
\begin{multline*}
Z_2(s)=\Big\{(c,d):0<{\sqrt{2}}c\leq {1},\; 0>{\sqrt{2}}d\geq {-1}, \;\sqrt{2}c-2d\geq 1,\; \\
2c-\sqrt{2}d\geq1, \;cd\leq-\frac{\sqrt{2}}{\pi^2s}\Big\}\;.
\end{multline*}
The region $\Omega_1^{1,2}(s)$ is the union of $Z_1(s)$ and $Z_2(s)$ which are symmetric, hence the area is 
$$
m(\Omega_1^{1,2}(s)) = 2m(Z_1(s))\;.
$$ 

Next we consider $\Omega_1^{1,3}(s)$. Here  $$\kappa(\gamma(C_3))=(\sqrt{2}c+2d)^2$$ 
and we need to split it into two cases.  If $\sqrt{2}c+2d>0$, then $\gamma C_4$ will be in between $\gamma C_1$ and $\gamma C_3$, and the corresponding region is 
\begin{multline*}
Z_3(s)=
\Big\{(c,d):0<{\sqrt{2}}c\leq {1},\; 0<\sqrt{2}c+2d\leq 1,\; 
2c+\sqrt{2}d\geq 1, \\
\;c(\frac{1}{\sqrt{2}}c+d)\geq\frac{\sqrt{2}}{\pi^2s}
\Big\}\;.
\end{multline*}
If $\sqrt{2}c+2d<0$, then $\gamma C_2$ will be in between $\gamma C_1$ and $\gamma C_3$, and, 
\begin{multline*}
Z_4(s)=
\Big\{(c,d):0<{\sqrt{2}}c\leq {1},\; 0>\sqrt{2}c+2d\geq -1, \;{\sqrt{2}}d\leq -1,\;
\\
 c(\frac{1}{\sqrt{2}}c+d)\leq-\frac{\sqrt{2}}{\pi^2s}\Big\}\;.
\end{multline*}
Then $\Omega_1^{1,3}(s)$ is the union of $Z_3(s)$ and $Z_4(s)$ (which   have equal areas), and 
$$ 
m(\Omega_1^{1,3}(s)) = m(Z_3(s)) + m(Z_4(s))\;.
$$
Thus 
$$
F_3(s) = \sqrt{2}\Big(2m(Z_1(s)) + m(Z_3(s))+m(Z_4(s)) \Big) \;.
$$
The density function  $P(s)$ of the normalized gaps  (the derivative of $F(s)$) is given in Figure~\ref{fig:app3density}.
\begin{figure}[h]
\begin{center}
\includegraphics[width=8cm]
{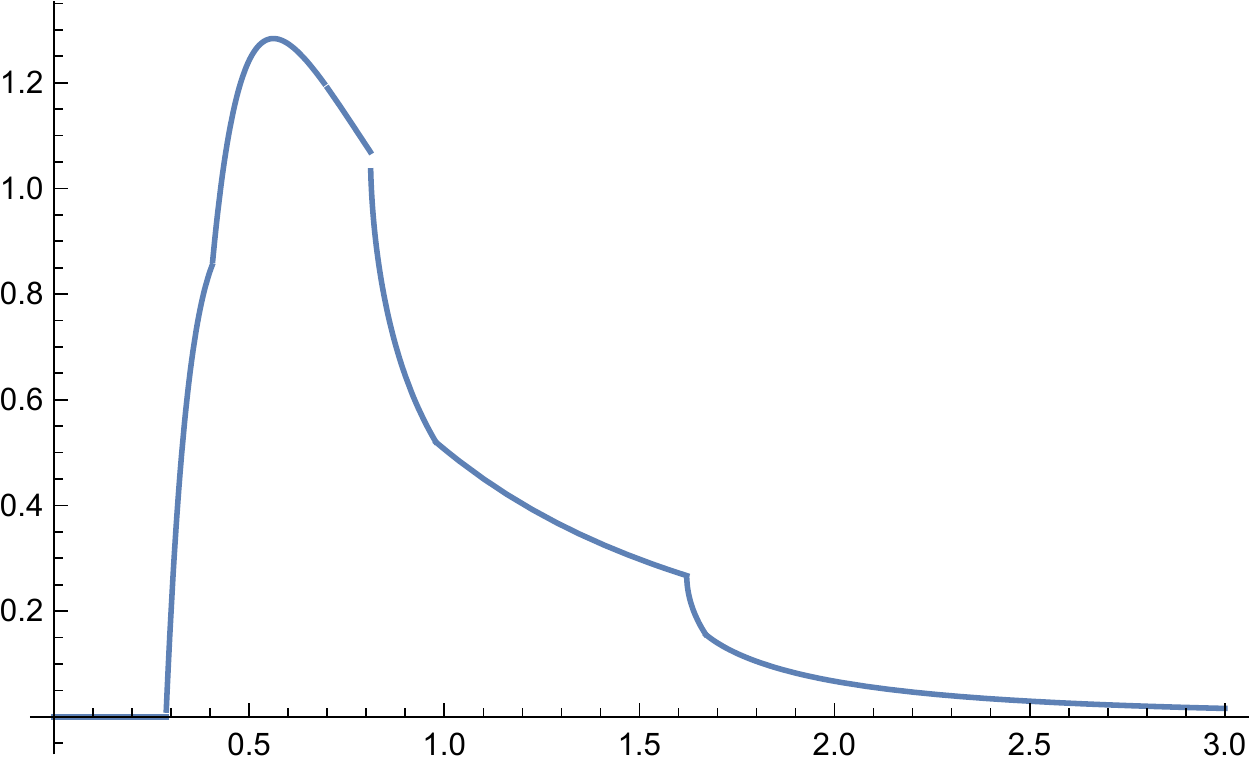}
 \caption{$P(s)$ for the Apollonian-3 packing.}
 \label{fig:app3density}
\end{center}
\end{figure}

\subsection{The Apollonian-9 Packing  } \label{sec:app9}
This packing was introduced in \cite{BGGM}. The initial configuration $\mathcal K$, whose tangency graph is the icosahedron, is shown in  Figure~\ref{ap9ford}. The base circle is $C_0=\RR$ and the circles $\mathcal K_0$ tangent to it are
\begin{multline*}
C_1=\RR+\ii, \quad C_2=C(\frac{\ii}{2},\frac{1}{2}), \quad C_3=C(\frac{\sqrt{5}-1}{2}+\frac{3-\sqrt{5}}{4}\ii,\quad \frac{3-\sqrt{5}}{4}), \\
C_4=C(1+\frac{3-\sqrt{5}}{4}\ii,\quad 
\frac{3-\sqrt{5}}{4}), \quad 
C_5=C(\frac{\sqrt{5}+1}{2}+\frac{\ii}{2}, \frac{1}{2})
\end{multline*}
  The constant $c_{\PP,C_0}$ in this case is $\frac{5(\sqrt{5}+1)}{12\pi^2}$. 
\begin{figure}[h]
\begin{center}
\includegraphics[width=8cm]
{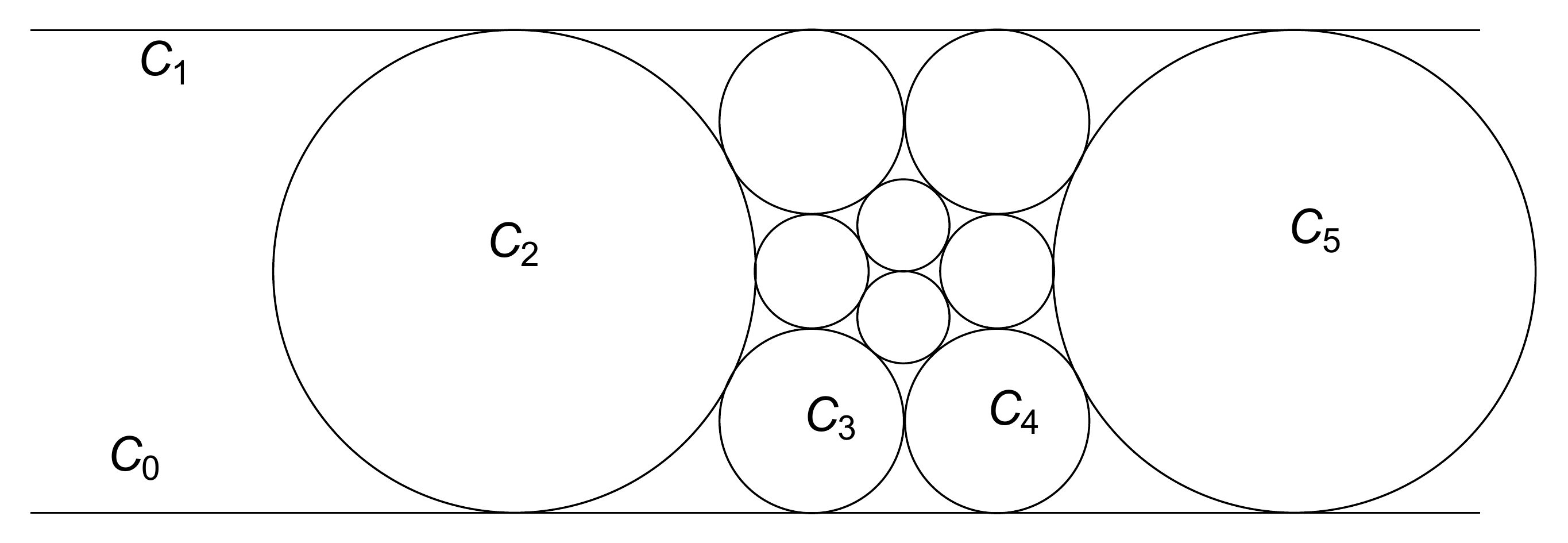}
 \caption{Ford Configuration of Apollonian-9 type}
 \label{ap9ford}
\end{center}
\end{figure}


The group  $\Gamma_0$ is generated by the reflections 
$$z\rightarrow \mat{-1&0\\0&1}\bar{z}, \quad z\rightarrow \mat{1&0\\\sqrt{5}+1&-1}\bar{z}, \quad  z\rightarrow \mat{2+\sqrt{5}&-1-\sqrt{5}\\\sqrt{5}+3&-2-\sqrt{5}}\bar{z},$$ $$z\rightarrow \mat{2+\sqrt{5}&-3-\sqrt{5}\\\sqrt{5}+1&-2-\sqrt{5}}\bar{z}, \quad z\rightarrow \mat{-1&1+\sqrt{5}\\0&1}\bar{z}\;.
$$  
Taking the products of the first matrix $z\mapsto -\bar z$ and each of the other four matrices, we get generators for $\Gamma$: 
 $$\Gamma=\left\langle\mat{1&1+\sqrt{5}\\0&1},\mat{1&0\\1+\sqrt{5}&1},\mat{2+\sqrt{5}&3+\sqrt{5}\\1+\sqrt{5}&2+\sqrt{5}},\mat{2+\sqrt{5}&1+\sqrt{5}\\3+\sqrt{5}&2+\sqrt{5}}\right\rangle.$$  

 We claim that $\Gamma\subset  \Hecke_5$, the Hecke-5 group. 
which is generated by 
$$S=\mat{0&-1\\1&0} \quad \mbox{and} \quad T_5=\mat{1&\frac{\sqrt{5}+1}{2}\\0&1}\;,
$$  
We check that each generator of $\Gamma$ can be written as word of $S$ and $T_5$ and hence lies in $\Hecke_5$:
Clearly, 
$$
\mat{1&1+\sqrt{5}\\0&1}=T_5^{2},\quad 
\mat{1&0\\1+\sqrt{5}&1}=ST_5^{-2}S^{-1}
$$
Moreover 
$$
\mat{2+\sqrt{5}&3+\sqrt{5}\\ \sqrt{5}+1&2+\sqrt{5}}=T_5ST_5^{2}ST_5
\quad 
$$
and
$$
\mat{2+\sqrt{5}&1+\sqrt{5}\\\sqrt{5}+3&2+\sqrt{5}}
=ST_5^{-1}ST_5^{-2}ST_5^{-1}S^{-1}
$$
as claimed.

Since $\Gamma$ is generated by elements of $\Hecke_5$, it is a subgroup of that group, and since both groups have finite volume, 
$\Gamma$ is a subgroup of finite index in $\Hecke_5$, and in particular   is non-arithmetic.

To see that $\Gamma$ is normal in $\Hecke_5$, we check that the conjugates of the generators of $\Gamma$ by the generators $S$, $T_5$ of $\Hecke_5$ still lie in $\Gamma$:   
 $$
S\mat{1&1+\sqrt{5}\\0&1}S^{-1}=\mat{1&0\\1+\sqrt{5}&1}^{-1},\quad S\mat{1&0\\1+\sqrt{5}&1}S^{-1}=\mat{1&1+\sqrt{5}\\0&1}^{-1};
$$
$$
S\mat{2+\sqrt{5}&3+\sqrt{5}\\1+\sqrt{5}&2+\sqrt{5}}S^{-1}=\mat{2+\sqrt{5}&1+\sqrt{5}\\3+\sqrt{5}&2+\sqrt{5}}^{-1};
$$
$$
S\mat{2+\sqrt{5}&1+\sqrt{5}\\3+\sqrt{5}&2+\sqrt{5}}S^{-1}=\mat{2+\sqrt{5}&3+\sqrt{5}\\1+\sqrt{5}&2+\sqrt{5}}^{-1};
$$
$$
T_5\mat{1&1+\sqrt{5}\\0&1}T_5^{-1}=\mat{1&1+\sqrt{5}\\0&1};
$$
$$T_5\mat{1&0\\1+\sqrt{5}&1}T_5^{-1}=
-\mat{1&1+\sqrt{5}\\0&1} \mat{2+\sqrt{5}&3+\sqrt{5}\\1+\sqrt{5}&2+\sqrt{5}}^{-1}
$$
 and hence 
$$
T_5\mat{2+\sqrt{5}&3+\sqrt{5}\\1+\sqrt{5}&2+\sqrt{5}}T_5^{-1}=-\mat{1&1+\sqrt{5}\\0&1}\mat{1&0\\1+\sqrt{5}&1}^{-1};
$$
$$
T_5\mat{2+\sqrt{5}&1+\sqrt{5}\\3+\sqrt{5}&2+\sqrt{5}}T_5^{-1}=
-\mat{1&1+\sqrt{5}\\0&1}\mat{2+\sqrt{5}&1+\sqrt{5}\\3+\sqrt{5}&2+\sqrt{5}}^{-1}.
$$

 The pairs $(C_1,C_2)$, $(C_1,C_3)$ are representatives of two conformally equivalent classes of pairs of circles from $\mathcal{K}_0$, characterized by the two circles being  tangent or disjoint and $\Hecke_5$ acts transitively on each class:  Indeed, 
$$
(C_1,C_2)\xrightarrow{T_5}(C_1,C_5)\xrightarrow{T_5S}(C_4,C_5)\xrightarrow{T_5S}(C_3,C_4)\xrightarrow{T_5S}(C_2,C_3)
$$
and 
$$
(C_1,C_3)\xrightarrow{T_5S}(C_2,C_5)\xrightarrow{T_5S}(C_1,C_4)\xrightarrow{T_5S}(C_3,C_5)\xrightarrow{T_5S}(C_2,C_4)
$$
Each equivalence class contains 5 pairs of circles.   Therefore,  
\begin{equation*}
F(s)=\frac{4}{5(\sqrt{5}+1)}\left(5m(\Omega_1^{1,2}(s))+10m(\Omega_1^{1,3}(s))\right) 
\end{equation*}

  The curvatures of the transformed circles are: 
  \begin{equation*}
  \begin{split}
  \kappa(\gamma (C_1))=2c^2,\quad \kappa(\gamma (C_2))=2d^2,\quad \kappa(\gamma (C_3))=2(c+\frac{\sqrt{5}+1}{2}d)^2,
  \\
  \kappa(\gamma (C_4))=2(\frac{\sqrt{5}+1}{2}c+\frac{\sqrt{5}+1}{2}d)^2,
  \quad  \kappa(\gamma (C_5))=2(\frac{\sqrt{5}+1}{2}c+d)^2\;.
  \end{split}
  \end{equation*}
Note that $\kappa(\gamma C_4)$ is greater than   $\kappa(\gamma  C_3)$ and $\kappa(\gamma  C_5)$.

We first compute $\Omega_1^{1,2}(s)$.   
If $d>0$, then by Observation~\ref{inbetween}, the circles $C_3,C_4,C_5$ will mapped by $\gamma$ in between $\gamma(C_1)$ and $\gamma(C_2)$.  The corresponding region is, using \eqref{bounding5infty}, 
\begin{multline*}
Z_1=\Big\{(c,d):0<c\leq\frac{\sqrt{2}}{2}, \; 0<d\leq \frac{\sqrt{2}}{2}, \; c+\frac{\sqrt{5}+1}{2}d\geq \frac{\sqrt{2}}{2},\;
\\
 \frac{\sqrt{5}+1}{2}c+d\geq \frac{\sqrt{2}}{2},\; cd\geq\frac{5(\sqrt{5}+1)}{12\pi^2s}\Big\}
\;.
\end{multline*}
The condition $\kappa(\gamma(C_4))\geq 1$ is redundant here. 

If $d<0$, then 
\begin{equation*}
\begin{split}
S_{1,2}(C_3)&=C(\frac{-\sqrt{5}+1}{2}+\frac{3-\sqrt{5}}{4}\ii, \frac{3-\sqrt{5}}{4}),
\\ 
S_{1,2}(C_4)&=C(-1+\frac{3-\sqrt{5}}{4}\ii,\frac{3-\sqrt{5}}{4})\;,\\ 
S_{1,2}(C_5)&=C(\frac{-\sqrt{5}-1}{2}+\frac{\ii}{2}, \frac{1}{2})
\end{split}
\end{equation*}
will be mapped in between $\gamma(C_1)$ and $\gamma(C_2)$.  The corresponding region is, by \eqref{bounding2infty}, 
\begin{multline*}
Z_2=\Big\{(c,d):0<c\leq\frac{\sqrt{2}}{2}, \; 0>d\geq-\frac{\sqrt{2}}{2}, \; 
 c-\frac{\sqrt{5}+1}{2}d\geq \frac{\sqrt{2}}{2}, \; \\ 
\frac{\sqrt{5}+1}{2}c-d\geq \frac{\sqrt{2}}{2}, \;  
 -cd\leq-\frac{5(\sqrt{5}+1)}{12\pi^2s}\Big\}\;.
\end{multline*}
The region $\Omega_1^{1,2}(s)$ is the union of $Z_1$ and $Z_2$, which are symmetric.

Now  consider $\Omega_1^{1,3}(s)$.  By \eqref{bounding6infty}, if $c+\frac{\sqrt{5}+1}{2}d>0$, then  the images $\gamma C_4$ and $\gamma C_5$ will be between $\gamma C_1$ and $\gamma C_3$, and the corresponding region is 
\begin{multline*}
Z_3= \{ (c,d):0<c\leq\frac{\sqrt{2}}{2}, \; 0<c+\frac{\sqrt{5}+1}{2}d\leq \frac{\sqrt{2}}{2}, \; 
\frac{\sqrt{5}+1}{2}c+d\geq \frac{\sqrt{2}}{2},\;\\
 \frac{\sqrt{5}+1}{2}c+\frac{\sqrt{5}+1}{2}d\geq \frac{\sqrt{2}}{2},\; c(\frac{\sqrt{5}-1}{2}c+d)\geq \frac{5(\sqrt{5}+1)}{12\pi^2s} \}
\end{multline*}

Similarly, if $c+\frac{\sqrt{5}+1}{2}d<0$, then $\gamma C_2$ will be in between $\gamma C_1$ and $\gamma C_3$, and the region is
\begin{multline*}
Z_4=\{(c,d):0<c\leq\frac{\sqrt{2}}{2}, \; 0>c+\frac{\sqrt{5}+1}{2}d\geq -\frac{\sqrt{2}}{2}, \; d\leq -\frac{\sqrt{2}}{2}, \;
\\ c(\frac{\sqrt{5}-1}{2}c+d)\leq -\frac{5(\sqrt{5}+1)}{12\pi^2s}\}
\end{multline*}
The region  $\Omega_1^{1,3}(s)$ is the union of $Z_3$ and $Z_4$. 
Therefore,  
\begin{equation*}
\begin{split}
F(s)&=\frac{4}{5(\sqrt{5}+1)}\left(5m(\Omega_1^{1,2}(s))+10m(\Omega_1^{1,3}(s))\right)\\
& = \frac{8}{\sqrt{5}+1}\Big(m(Z_1(s)) + m(Z_3(s)) + m(Z_4(s)) \Big)\;.
\end{split}
\end{equation*}
The density function    of the normalized gaps 
is given in Figure~\ref{fig:app9density}.
\begin{figure}[b]
\begin{center}
\includegraphics[width=8cm]
{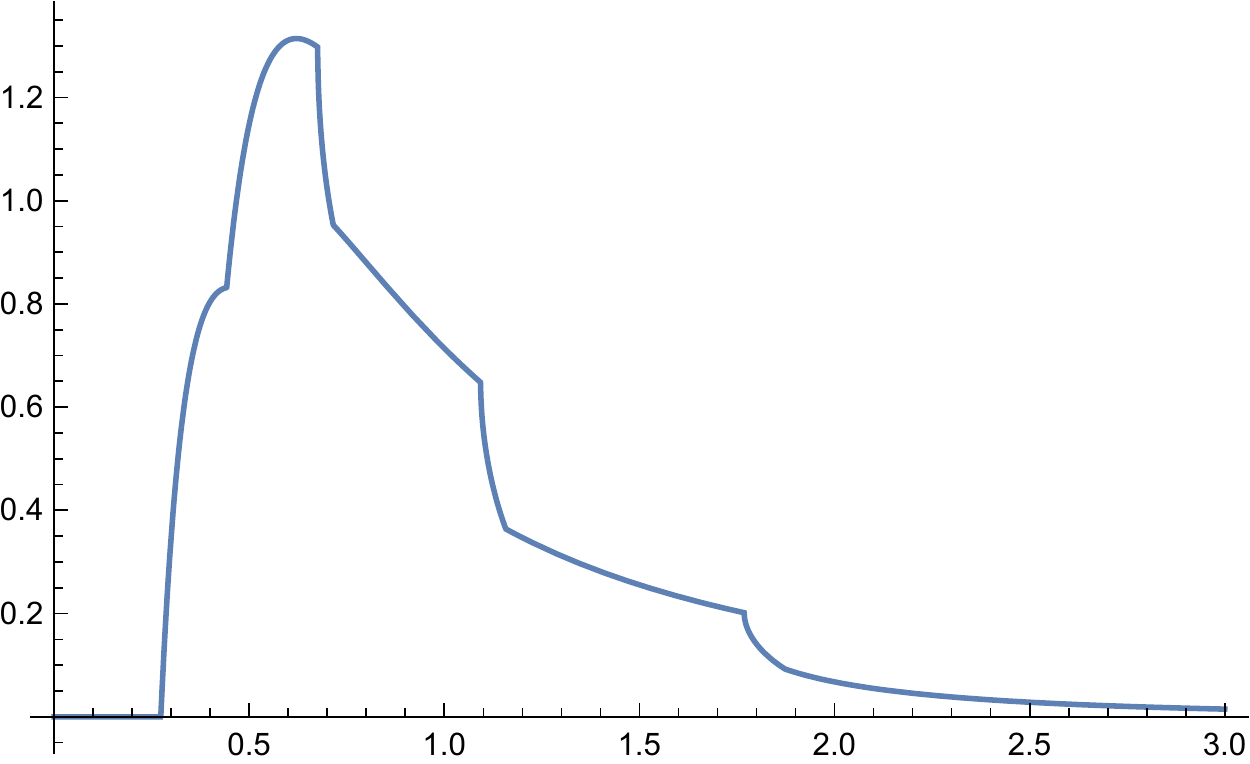}
 \caption{$P(s)=F'(s)$ for the Apollonian-9 packing.}
 \label{fig:app9density}
\end{center}
\end{figure}

\end{document}